\documentclass[a4paper,UKenglish,cleveref, autoref, thm-restate]{lipics-v2021}

\hideLIPIcs  

\usepackage{tikz}
\usepackage{stmaryrd}
\usepackage{xcolor}
\usetikzlibrary{calc}

\usepackage[normalem]{ulem}

\bibliographystyle{plainurl}

\title{Online and Feasible Presentability: From Trees to Modal Algebras\footnote{This is a related version of the paper accepted at ICALP 2025.}} 


\author{Nikolay Bazhenov}{Novosibirsk State University, Novosibirsk, Russia}{nickbazh@yandex.ru}{https://orcid.org/0000-0002-5834-2770}{The work of N.~Bazhenov is supported by the Mathematical Center in Akademgorodok under the agreement 
No.~075-15-2022-282 with the Ministry of Science and Higher Education of the Russian Federation.}

\author{Dariusz Kalociński}{Institute of Computer Science, Polish Academy of Sciences, Warsaw, Poland}{dariusz.kalocinski@gmail.com}{https://orcid.org/0000-0002-3044-525X}{The work of D. Kalociński is supported by the National Science Centre Poland under the agreement no. 2023/49/B/HS1/03930.}

\author{Michał Wrocławski}{Faculty of Philosophy, University of Warsaw, Poland}{m.wroclawski@uw.edu.pl}{https://orcid.org/0000-0003-2679-7321}{}


\authorrunning{N. Bazhenov, D. Kalociński, and M. Wrocławski} 

\Copyright{Nikolay Bazhenov, Dariusz Kalociński, and Michał Wrocławski} 

\ccsdesc[500]{Theory of computation~Computability} 

\keywords{Algebraic structure, computable structure, fully primitive recursive structure, punctual structure, polynomial-time computable structure, punctual robustness, tree, semilattice, lattice, Boolean algebra, modal algebra} 

\category{} 

\relatedversion{} 



\acknowledgements{We would like to thank Luca San Mauro and the reviewers for helpful comments.}

\nolinenumbers 

\EventEditors{John Q. Open and Joan R. Access}
\EventNoEds{2}
\EventLongTitle{42nd Conference on Very Important Topics (CVIT 2016)}
\EventShortTitle{CVIT 2016}
\EventAcronym{CVIT}
\EventYear{2016}
\EventDate{December 24--27, 2016}
\EventLocation{Little Whinging, United Kingdom}
\EventLogo{}
\SeriesVolume{42}
\ArticleNo{23}

\begin{document}

\maketitle

\begin{abstract}

We investigate whether every computable member of a given class of structures admits a fully primitive recursive (also known as punctual) or fully P-TIME copy. A class with this property is referred to as punctually robust or P-TIME robust, respectively. We present both positive and negative results for structures corresponding to well-known representations of trees, such as binary trees, ordered trees, sequential (or prefix) trees, and partially ordered (poset) trees. A corollary of one of our results on trees is that semilattices and lattices are not punctually robust. In the main result of the paper, we demonstrate that, unlike Boolean algebras, modal algebras—--that is, Boolean algebras with modality—--are not punctually robust. The question of whether distributive lattices are punctually robust remains open. The paper contributes to a decades-old program on effective and feasible algebra, which has recently gained momentum due to rapid developments in punctual structure theory and its connections to online presentations of structures.  
\end{abstract}

\section{Introduction}

Suppose an infinite graph \( G = (\mathbb{N}, E) \) is given via a computer program \( P \), which, for any pair \( x, y \in \mathbb{N} \), determines whether \( E(x, y) \) holds. This graph is an infinite structure represented in a finitary manner. Such structures are called computable.\footnote{An infinite countable structure $\mathcal A$ is computable if the domain of $\mathcal A$ is a computable subset of $\mathbb N$ and all the relations and functions from the signature of $\mathcal A$ are uniformly computable \cite{ash_computable_2000}.} However, without further restrictions on \( P \), the presentation of \( G \) via $P$ may be inefficient. Does there always exist a more efficient representation of $G$---that is a computer program \( P' \) such that \( P' \) computes \( E' \), \( G' = (\mathbb{N}, E') \) is isomorphic to \( G \), and \( E' \) is primitive recursive or even P-TIME computable?

The answer to the question posed above is negative \cite{kalimullin_algebraic_2017}. However, if we impose certain structural restrictions on $G$—--such as requiring $G$ to be strongly locally finite or to be a tree—--the answer becomes positive \cite{cenzer_feasible_1998}. A more general condition that guarantees the existence of a nice copy of $G$ has been presented in \cite{kalocinski_punctual_2024}.

The question raised in the opening paragraph admits several natural generalizations. For instance, one could replace graphs with other structures or explore alternative effectiveness conditions beyond primitive recursiveness or polynomial-time computability. Various instances of this question have drawn interest from researchers at the intersection of automata theory, complexity theory, and computable algebra. Notable examples include studies on algebraic structures represented by finite-state automata \cite{khoussainov1994automatic,Blumensath-Gradel-00,Gradel-20} and investigations into polynomial-time algebra \cite{Nerode-Remmel-89,cenzer_polynomial_1991,alaev_polynomial_2018}. Recently, Kalimullin, Melnikov, and Ng \cite{kalimullin_algebraic_2017} initiated a research program at the intersection of computability and complexity, where primitive recursiveness serves as a fundamental tool for representing structures. Henceforth, we adopt this paradigm in the paper. 

\begin{definition}[punctual structure \cite{kalimullin_algebraic_2017}]\label{def:punctual structure}
An infinite structure over a finite signature is \emph{punctual} if its domain is $\mathbb N$ and all of its relations and functions are primitive recursive.
\end{definition}
In the literature, punctual structures are also referred to as fully primitive recursive (fpr).

The purpose of the above definition was to capture the notion of an infinite structure which can be presented in an online fashion, or ``without delay''. The requirement that the domain be $\mathbb{N}$ (or any \emph{fixed} primitive recursive subset of $\mathbb N$) is significant. If, instead, we require the domain can be \emph{any} primitive recursive subset of $\mathbb N$, a distinct notion arises (cf. Theorem 1.2 in \cite{cenzer_polynomial_1991} and the associated discussion). More importantly, structures satisfying the relaxed definition may lack the typical online characteristics \cite{kalimullin_algebraic_2017,bazhenov_foundations_2019}.

Henceforth, all structures, unless stated otherwise, are countably infinite.

A more detailed explanation of the underlying nature of punctual structures follows. This will be useful later, as many subsequent constructions adhere to this general pattern.

Essentially, a punctual structure $\mathcal A$ may be given by a primitive recursive algorithm $P$ which, on input $\overline{x} = x_1,x_2,\dots,x_n$, determines $R^\mathcal{A}(\overline x)$ and $f^\mathcal{A}(\overline x)$, for every $R,f$ from the signature of $\mathcal A$. Intuitively, for arguments $x_i \leq s$ (where $s\in\mathbb{N}$), all the relations and function values are determined by the primitive recursive computation $P(\overline{x})$ which should converge by the  computational stage $s$. In a manner of speaking, our decision whether $R(\overline x)$, or what is $f(\overline x)$, must be \emph{quick}, or made \emph{without delay}. Here \emph{quick} and \emph{without delay} means \emph{now}, that is at most at the stage $s$. In fact, Definition \ref{def:punctual structure} allows for a delay, but the delay itself must be primitive recursive---any instance of a truly unbounded search is prohibited.

As observed above, a punctual presentation of a structure possesses the following core property: \emph{input is received and processed incrementally, and decisions about incoming data must be made without access to the complete set of problem data}. This property is a defining characteristic of online procedures in general, both in finite and infinite settings: in the finite setting, online algorithmics comprises a distinctive subfield of computer science, where performances of online algorithms are compared, relative to the offline performance, using competitive analysis (see, e.g., \cite{borodin_online_2005}); in the infinite setting, online graph coloring procedures from infinite combinatorics serve as a pertinent example (see, e.g., \cite{kierstead_recursive_1998}). Due to the core property punctual structures share with online algorithms, it has been suggested that they encapsulate, at least partially, what may intuitively be described as a truly online presentation of a countably infinite structure; the justification of this approach is a delicate task and has been accomplished elsewhere \cite{kalimullin_algebraic_2017,bazhenov_foundations_2019}. For alternative theoretical approaches, see \cite{downey_foundations_2021,askes_online_2022}.

The theory of online structures has rapidly emerged as an intriguing subfield of computable structure theory, yielding an impressive amount of results across various topics, including universality \cite{downey2020punctual,DowneyHKMT-20}, degree structures \cite{bazhenov2020online,greenberg2021non,MelNg-IJM}, and definability \cite{kalimullin_definability}, among others~\cite{PRA-24,Mel-Dorzhieva-23,MelNg-PAMS}. 
One of the central topics in online structure theory concerns punctual presentability.
\begin{definition}
A structure is \emph{punctually presentable} if it is isomorphic to a punctual one.
\end{definition}
This definition formalizes the property we asked about $G$ in the opening paragraph of this paper. More generally, given a class $\mathfrak K$ of structures, we ask whether every computable member of $\mathfrak K$ has a punctual copy.  \begin{definition}[punctual robustness \cite{kalocinski_punctual_2024}]
    We say that a class of structures $\mathfrak{K}$ is \emph{punctually robust}, if every computable member of $\mathfrak{K}$ is punctually presentable.
\end{definition}

Turning to more feasible computations, similar notions naturally arise in the polynomial-time setting. 
As usual, our underlying model for P-TIME computations is provided by Turing machines.

\begin{definition}[P-TIME structure and P-TIME robustness]
     By $\mathbb{B}$ we denote $\{0,1\}^{<\omega}$, i.e., the set of all finite binary strings. A structure $\mathcal{S}$ in a finite signature $L$ is \emph{P-TIME} (or polynomial-time computable) if the domain of $\mathcal{S}$ is a polynomial-time computable subset of $\mathbb{B}$, and all the $L$-relations and $L$-functions of $\mathcal{S}$ are polynomial-time computable. 
     A P-TIME structure $\mathcal{S}$ is \emph{fully P-TIME} if the domain of $\mathcal{S}$ is equal to $\mathbb{B}$. A structure is fully P-TIME presentable if it is isomorphic to a fully P-TIME structure.

     We say that a class of structures $\mathfrak{K}$ is \emph{P-TIME robust} if every computable member of $\mathfrak{K}$ is fully P-TIME presentable. 
\end{definition}

We note that for this framework, in place of the binary alphabet $\{0,1\}$ in the definition of P-TIME robustness, one can choose an arbitrary alphabet $\Sigma$ which contains at least two symbols (see, e.g., Chapter~3 in~\cite{Cenzer-Remmel-Survey-PTIME}).

A proof of punctual/P-TIME non-robustness for a given isomorphism-closed class $\mathfrak K$ of $L$-structures (where $L$ indicates the signature) usually requires specific strategies tailored to the type of structure we are dealing with. However, the general scheme is an example of diagonalization and can be framed in game-theoretic terms as follows. We imagine playing a game against infinitely many adversaries $\mathcal A_1, \mathcal A_2, \dots$. Adversaries are given via a computable enumeration. Each $\mathcal A_i$ is a punctual/P-TIME $L$-structure and $\mathcal A_1, \mathcal A_2,\dots$ contains all isomorphism-types of punctual/P-TIME copies of the structures from $\mathfrak K$. The proof must give a \emph{computable} strategy for building $\mathcal B \in \mathfrak K$ such that $\forall i (\mathcal A_i \in \mathfrak K \Rightarrow \mathcal B \not\cong \mathcal A_i)$.

 The following classes of structures have been shown to be punctually robust: equivalence structures \cite{cenzer_polynomial_1991,kalimullin_algebraic_2017}, (relational) successor trees \cite{cenzer_feasible_1998}, linear orders \cite{grigorieff_every_1990,kalimullin_algebraic_2017}, torsion-free abelian groups \cite{kalimullin_algebraic_2017}, Boolean algebras \cite{kalimullin_algebraic_2017}, abelian $p$-groups \cite{kalimullin_algebraic_2017}, graphs with infinite semitransversals \cite{kalocinski_punctual_2024}; on the other hand, there are computable undirected graphs \cite{kalimullin_algebraic_2017}, computable torsion abelian groups \cite{cenzer_polynomial-time_1992}, computable Archimedean ordered abelian groups \cite{kalimullin_algebraic_2017} and functional predecessor trees \cite{kalocinski_punctual_2024} with no punctual copy.

\paragraph*{Contributions}In this work we provide new results on punctual and P-TIME robustness of trees, (semi)lattices and modal algebras. Note that if a class is not punctually robust then it is not P-TIME robust either. Hence, P-TIME robustness is attested only for punctually robust classes. In the following, we briefly describe each of the three contributions.

 The first group of results concerns trees. It is a continuation of the work by Cenzer and Remmel on (variations of) relational successor trees \cite{cenzer_feasible_1998}, as well as of Kalociński, San Mauro and Wrocławski on functional predecessor trees \cite{kalocinski_punctual_2024}. Our results extend this line of research to a few other well-known tree structures.

One of the fundamental types of tree-like structures is an ordered tree (\cite{knuth_art_1968}, p. 306). This concept arises naturally by imposing an ordering on the children of each node in the tree. To avoid confusion with poset trees (to be defined), we use the term \emph{relational predecessor trees with ordering}. The definition could have used the successor relation instead, without affecting Theorem \ref{theo:predecessor-trees-with-ordering} (see below).

 \begin{definition}[r.p.o. tree]\label{def rpo tree}
     $\mathcal{T}=(T, P, <,r)$ is a relational predecessor tree with ordering (abbreviated r.p.o.~tree) if $P$ is a binary relation and $<$ is a partial ordering satisfying:
     \begin{enumerate}
         \item $(T,P)$ is a directed graph such that all edges are oriented towards the root $r$, and the underlying undirected graph is connected and acyclic,
    \item  for every $x,y \in T$, $x \leq y \vee y \leq x$ iff $x$ and $y$ have the same parent.
     \end{enumerate}
 \end{definition}

The relation $P$ is the immediate predecessor relation of the tree. In Section \ref{sec:rpo} we prove:

\begin{restatable}{theorem}{theoRPOtrees}\label{theo:predecessor-trees-with-ordering}
    The class of r.p.o.~trees is punctually robust and P-TIME robust.
\end{restatable}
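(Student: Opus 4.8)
The plan is to take a computable r.p.o.\ tree $\mathcal{T}=(T,P,<,r)$ (we may assume its domain is $\mathbb{N}$) and build a punctual copy $\mathcal{B}$ by running the known construction of a punctual copy of a tree, augmented by one extra layer that handles the sibling ordering. Since the predecessor- and successor-relation formulations are interchangeable here (as remarked before Definition~\ref{def rpo tree}), the underlying structure $(T,P,r)$ may be regarded as a relational successor tree, and such structures are punctually --- and P-TIME --- robust \cite{cenzer_feasible_1998}. What is new is that $\mathcal{B}$ must also carry a correct copy of $<$, whose restriction to the set of children of any fixed node of $\mathcal{T}$ is an arbitrary computable linear order (finite or infinite, of any order type). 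I would therefore also reuse, as a black box, the fact that linear orders are punctually --- indeed feasibly --- robust \cite{grigorieff_every_1990,kalimullin_algebraic_2017}.

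First I would set up the tree part in the usual way: by stages, maintaining a finite partial isomorphism from the portion of $\mathcal{B}$ built so far onto a finite subtree of $\mathcal{T}$, faithfully imitating $\mathcal{T}$ whenever its (slow) computable presentation has revealed enough, and otherwise inserting fresh \emph{padding} children at places with guaranteed room, so that every value of $P^{\mathcal{B}}$ is committed with only a primitive-recursive delay. On top of this I would run, for every node $b$ already created in $\mathcal{B}$, an instance of the linear-order copying procedure, responsible for building $<^{\mathcal{B}}$ restricted to the children of $b$; these instances are dovetailed with the tree construction. When the tree layer adds a new child $c$ of $b$ and matches it to a child $c^{\ast}$ of $b$'s $\mathcal{T}$-image, it feeds the instance at $b$ the position of $c^{\ast}$ relative to the siblings already matched, and $c$ is inserted there; when it adds a padding child of $b$, the instance at $b$ places it provisionally and fixes its position later, once that child's $\mathcal{T}$-counterpart has been decided. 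In the end $P^{\mathcal{B}}$ is the tree produced by the first layer and $<^{\mathcal{B}}$ is the disjoint union, over all $b$, of the linear orders produced by the instances --- no two nodes with different parents being $<^{\mathcal{B}}$-comparable --- so condition~2 of Definition~\ref{def rpo tree} holds by construction, and the bookkeeping guarantees $\mathcal{B}\cong\mathcal{T}$.

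I expect the crux to be the interaction of the speed-up (padding) steps of the two layers: a padding child added by the tree layer must be assigned a $<^{\mathcal{B}}$-position that stays correct under every later refinement of the partial isomorphism, while at the same time each linear-order instance performs its own speed-up insertions, so the bookkeeping has to be arranged so that neither layer ever invalidates a commitment of the other. What makes this feasible is precisely that the punctual copying procedure for linear orders already operates in this regime --- it accepts elements whose eventual rank is pinned down only later --- so the two constructions can be interleaved; the remaining care is to keep the combined construction, now running infinitely many instances in parallel, within a primitive-recursive (hence punctual) delay. For P-TIME robustness I would redo everything with an explicit polynomial-time coding of the elements of $\mathcal{B}$ as binary strings, each string recording the \emph{address} of a node --- the sequence of positions, in the appropriate sibling orders, of the nodes on its branch --- so that $P^{\mathcal{B}}$ becomes an immediate-prefix test and $<^{\mathcal{B}}$ a lexicographic comparison of addresses; the polynomial time bounds then follow from the P-TIME robustness of relational successor trees \cite{cenzer_feasible_1998} and the feasibility of the linear-order construction \cite{grigorieff_every_1990}.
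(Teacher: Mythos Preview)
Your decomposition into a tree layer (Cenzer--Remmel) plus per-node linear-order layers does not hold together, and the paper's proof proceeds quite differently.

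The central problem is the interaction you yourself flag but do not resolve. When the tree layer adds a padding child $c$ of some $b$, the $<^{\mathcal B}$-position of $c$ among $b$'s existing children must be committed \emph{now}; that commitment then constrains which $\mathcal T$-child can later serve as $c$'s counterpart. But the Cenzer--Remmel procedure you are black-boxing was designed without that constraint: it will happily match $c$ to any newly discovered child of $b^*$, regardless of where that child sits in $<^{\mathcal T}$. Conversely, if you let the linear-order instance at $b$ choose the counterpart (Grigorieff-style), then the linear-order layer is deciding the tree isomorphism, and there is nothing left for the tree layer to do. Either way the two black boxes collapse into one construction that must be built from scratch. Moreover, Grigorieff's procedure needs the order to be infinite; at every node with only finitely many children your instance has no room to pad, and you give no criterion for locating ``places with guaranteed room'' --- that is precisely the nontrivial content.

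The paper handles this by a case split rather than a layered construction. If $\mathcal T$ has unbounded depth (or, more generally, some node has infinitely many children that themselves have children), one keeps a reservoir of elements declared $P$- and $<$-unrelated to everything; whenever the computable tree reveals a new \emph{grandchild} below an already-copied node, a reservoir element is used as the grandchild and a fresh element as the intermediate parent. The reservoir element then has a brand-new parent and hence no siblings, so ``unrelated to everything so far'' is automatically consistent with $<$ --- no linear-order bookkeeping is needed at all. In the remaining case $\mathcal T$ has finite depth and some node $a$ has infinitely many children almost all of which are leaves; one fixes a single infinite interval in $a$'s sibling order, invokes linear-order robustness on that \emph{one} interval, and routes all padding there. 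So linear-order robustness enters only once, at a single pre-identified location, not as a family of instances dovetailed at every node.

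Your P-TIME plan also breaks: encoding a node by ``the sequence of positions in the sibling orders along its branch'' presupposes a well-defined position, but infinite sibling orders need not be of order type $\omega$, so no canonical position exists; and $<^{\mathcal B}$ is not lexicographic on such addresses (siblings are comparable, but a node and its parent are not). The paper's P-TIME argument instead reuses the same case split with the usual short-string/long-string reservoir device.
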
Theorem~\ref{theo:predecessor-trees-with-ordering} unifies and expands on both the results of Cenzer and Remmel \cite{cenzer_feasible_1998} on trees, and of Grigorieff \cite{grigorieff_every_1990} on linear orderings.

Another type of the tree that we consider is a poset tree. Given a partial order $(P, \leq)$, we define $P_{\leq x} = \{ y \in P: y \leq x\}$ and $P_{\geq x} = \{y \in P: y \geq x\}$. 
\begin{definition}[poset tree]\label{def:partially_ordered_tree}
    A partial order $(P,\leq)$ is a poset tree if $(P,\leq)$ has the greatest element (the root) and, for every $x \in P$, $P_{\geq x}$ is a finite linear order. 
\end{definition}
Such trees are well-known in set theory, where a tree is a partially ordered set $(P,<)$, usually with the least element, such that for each $x \in P$, the set $P_{<x}$ is well-ordered (see, e.g., Definition 9.10 in \cite{jech_set_2007}). The inessential difference between the two definitions is the direction in which the tree grows. A more important difference is that, in set theory, a node may be infinitely far apart from the root. Our definition does not allow it. However, the following theorem of ours, which we prove in Section \ref{sec:partially_ordered_trees}, works even for the class of trees in the set-theoretic sense (provided we reverse the growth direction):

\begin{restatable}{theorem}{thmPosetTrees}\label{thmPosetTrees}
    Poset trees are not punctually robust.
\end{restatable}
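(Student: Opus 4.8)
The plan is to run a diagonalization producing a computable poset tree $\mathcal B$ that is not isomorphic to any punctual one. Fix an effective list $\mathcal A_0,\mathcal A_1,\dots$ of all punctual structures in the signature $\{\leq\}$ with domain $\mathbb N$ (index them by primitive recursive programs, which can be listed effectively); this list contains every punctual poset tree up to the choice of index. We build $\mathcal B$ in stages as an increasing union $\mathcal B=\bigcup_s\mathcal B_s$ of finite poset trees in which the $\leq$-relation on already-introduced elements is never revised, so that $\leq^{\mathcal B}$ is computable, and we keep spare ``padding'' elements available so that in the end $\mathrm{dom}(\mathcal B)=\mathbb N$. For every $e$ we must meet the requirement
\[
R_e:\qquad \mathcal A_e\ \text{is a poset tree}\ \Longrightarrow\ \mathcal B\not\cong\mathcal A_e .
\]
The requirements will act on pairwise disjoint subtrees hanging from the root of $\mathcal B$, so they do not interfere and no priority argument is needed; the searches that each $R_e$ performs inside $\mathcal A_e$ are simply dovetailed.

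The strategy for a single $R_e$ exploits the basic asymmetry between the two players: the construction of $\mathcal B$ may postpone committing the isomorphism type of a subtree for as long as an unbounded search requires, whereas $\mathcal A_e$ is a \emph{fixed} structure whose finite fragments on $\{0,\dots,t\}$ are computable uniformly in $t$ and which, being punctual, introduces a new element at every stage ``without delay''. Requirement $R_e$ reserves a fresh child $v_e$ of the root of $\mathcal B$ and grows below it a rigid finite \emph{marker} $M_e$ --- designed so that it embeds into $\mathcal B$ only as the subtree rooted at $v_e$ (for instance by prescribing a strictly increasing-in-$e$ sequence of branching numbers along a distinguished path of $M_e$, with the branching node placed strictly below the top of $M_e$) --- together with an infinite tail hanging below a distinguished leaf $q_e$ of $M_e$ to keep $\mathcal B$ infinite. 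The isomorphism type of $\mathcal B$ in a bounded neighbourhood of $q_e$ is left open, to be completed later by one of two non-isomorphic finite gadgets. Meanwhile we search $\mathcal A_e$ for a copy of $M_e$. If $\mathcal A_e\cong\mathcal B$, this search must succeed, and by rigidity and uniqueness of $M_e$ it pins down the vertex $\hat q_e$ of $\mathcal A_e$ that must correspond to $q_e$; since $\mathcal A_e$ cannot delay, by the stage at which the marker is detected $\mathcal A_e$ has already committed to enough of its structure around $\hat q_e$, and we then complete $\mathcal B$ near $q_e$ with whichever of the two gadgets is incompatible with that commitment. As $M_e$ occurs uniquely in $\mathcal B$, hence in any isomorphic copy, this yields $\mathcal B\not\cong\mathcal A_e$. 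If the search never halts, $M_e$ does not embed into $\mathcal A_e$, so $\mathcal A_e\not\cong\mathcal B$ already, and the remaining stages are spent on padding and on the other requirements.

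Two things need care, and the second is the crux. First, one must check that $\mathcal B$ is genuinely a poset tree with domain $\mathbb N$ and computable $\leq$, and that the markers $M_e$ can be chosen rigid and so that each embeds into $\mathcal B$ only as the subtree below $v_e$, while still leaving room for the tails and the padding; this is bookkeeping, but it does require some attention since the root of $\mathcal B$ has infinitely many children. Second, and this is the main obstacle, one must isolate the correct pair of competing gadgets for the diagonalization step. The naive candidates do not work: ``$q_e$ has one child versus two children'' fails because a punctual poset tree may attach a further child to an existing vertex at any later stage, and ``the chain below $q_e$ has length $k$ versus $k+1$'' fails because a punctual poset tree may realise an element strictly between two already-comparable elements at a later stage --- doing so only \emph{adds} pairs to $\leq$ and violates nothing. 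The gadget must therefore be chosen so that the mere requirement that $\mathcal A_e$ keep producing a new element at every stage forces it, on pain of falling out of step with $\mathcal B$, to over-determine the ancestor structure of some element of the tail, and it is precisely this forced over-determination that the late choice in $\mathcal B$ can contradict. Pinning down such a gadget, and verifying that no primitive recursive presentation can circumvent it, is where the substance of the proof lies.
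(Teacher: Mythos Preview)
Your proposal is not a proof: you explicitly stop at the point where the argument has content, saying that ``pinning down such a gadget \ldots\ is where the substance of the proof lies.'' That would be acceptable if the rest of the setup made such a gadget available, but in fact your framework makes the missing step harder, not easier. The problem is your claim that once the marker $M_e$ is located in $\mathcal A_e$, punctuality has forced $\mathcal A_e$ to ``commit to enough of its structure around $\hat q_e$''. It has not. Punctuality only says that $\mathcal A_e$ has revealed the $\leq$-relations among $\{0,\dots,t\}$; nothing prevents all of those elements, apart from the copy of $M_e$, from lying below $\hat v_{e'}$ for other $e'$, or in parts of the tree having nothing to do with $\hat q_e$. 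Since (as you correctly observe) $\mathcal A_e$ can later insert intermediate nodes and add children anywhere, no finite ``two-gadget'' choice localized at $q_e$ can be contradicted by what $\mathcal A_e$ has shown near $\hat q_e$, because it may have shown nothing there. Moreover, your commitment to an infinite tail below $q_e$ removes the one move that actually works, namely freezing the entire subtree at a finite size.

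The paper's proof takes a different route precisely to get around this. It does not look for a local gadget at all; it uses a global counting argument. The tree $\mathcal T$ is built as a uniquely branching binary tree, and the strategy for $\mathcal P_i$ waits until the first $i{+}1$ levels of $\mathcal P_i$ and $\mathcal T$ stabilize and agree. At that point one looks at the approximation $\mathcal P_{i,s'}$ with $s' > |T_s|$: since the domain of $\mathcal P_i$ is $\mathbb N$, this approximation has strictly more elements than $\mathcal T_s$, and since the top $i{+}1$ levels coincide, the surplus must (pigeonhole) sit in the subtree below some level-$(i{+}1)$ node $x_j$. One then freezes the subtree of $\mathcal T$ below $x_j$ forever; it is already strictly smaller than the corresponding subtree of $\mathcal P_i$, and $\mathcal P_i$ can only grow. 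A short lemma guarantees that even with one frozen node per level the binary tree still has an open leaf, so $\mathcal T$ keeps growing. The essential idea you are missing is that the diagonalization is by \emph{cardinality of a subtree}, not by the isomorphism type of a finite gadget; your correct observations about insertions and added children are exactly why a counting argument is needed instead.
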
 While the underlying combinatorial idea behind the proof is quite intuitive, and some variants of it have been used elsewhere for a stronger notion of a tree \cite{cenzer_feasible_1998}, the present case requires much more care, as nodes in a poset tree do not carry information about their distances from the root. Even more importantly, Theorem \ref{thmPosetTrees} yields non-robustness of structures that lie intermediate between trees and Boolean algebras:

\begin{restatable}{corollary}{corLattices}\label{cor:lattices} The following classes of structures are not punctually robust:
\begin{romanenumerate}
    \item join semilattices and meet semilattices,\label{item semilattices} 
    \item lattices, complemented lattices, and non-distributive lattices.\label{item lattices}
\end{romanenumerate}
The result holds under both order-theoretic and algebraic interpretation. 
\end{restatable}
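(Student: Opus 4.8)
The plan is to derive Corollary~\ref{cor:lattices} from Theorem~\ref{thmPosetTrees} by exhibiting, for each of the classes listed, a way of coding an arbitrary poset tree into a member of the class so that the coding is computable, preserves isomorphism, and—crucially—reflects punctual presentability: if the coded structure had a punctual copy, then so would the original poset tree, contradicting Theorem~\ref{thmPosetTrees}. For join semilattices, the key observation is that a poset tree $(P,\leq)$ with greatest element is \emph{already} a meet semilattice in a natural sense once we restrict attention to comparable elements — but in general two incomparable nodes of a poset tree need not have a meet. So instead I would adjoin a fresh bottom element $\bot$ below everything: in $(P \cup \{\bot\}, \leq)$ any two elements $x,y$ have an infimum, namely the unique minimal common ancestor along the (finite, linearly ordered) paths $P_{\geq x}$ and $P_{\geq y}$ if $x,y$ lie on a common branch, and $\bot$ otherwise. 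This yields a meet semilattice; dually, adjoining a top turns the order-reversal into a join semilattice. The map $\mathcal T \mapsto \mathcal T \cup \{\bot\}$ is clearly computable and isomorphism-preserving, and from a punctual copy of the semilattice one recovers a punctual copy of the poset tree by deleting the one adjoined point and, if needed, recomputing $\leq$ from $\wedge$ (note $x \leq y \iff x \wedge y = x$, which is primitive recursive given punctual $\wedge$). Running this through Theorem~\ref{thmPosetTrees} gives non-robustness of (meet and, dually, join) semilattices.

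For lattices, the natural move is to take a meet semilattice obtained as above and freely adjoin joins, or more simply to embed the poset tree into a lattice by a two-sided completion: add both a fresh $\bot$ and a fresh $\top$, and then close off under the missing binary joins. The cleanest route is to observe that the poset tree with $\bot$ adjoined is a meet semilattice with least element, and a meet semilattice with least element in which every element is below the top of a finite chain can be embedded into a lattice in a uniformly computable way — for instance by the ideal/filter construction restricted to finitely generated ideals, or by directly defining $x \vee y$ to be $\top$ whenever $x,y$ lie on different branches and the higher of the two otherwise. One must check this really is a lattice (associativity and absorption for the ad hoc $\vee$), that it is non-distributive — which can be arranged by seeding a copy of $N_5$ or $M_3$ at the top, or by noting the branching itself already produces a non-distributive sublattice once joins collapse to $\top$ — and that it is complemented, which again the presence of $\bot,\top$ and the branch structure can be made to deliver (send each $x$ to the join of the branches it does not lie on, together with $\bot$; complementation does not require uniqueness, only existence of \emph{a} complement). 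The order-theoretic and algebraic interpretations coincide up to the primitive-recursive interdefinability of $\leq$ with $\wedge,\vee$ noted above, so the parenthetical final sentence of the corollary costs nothing extra.

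The main obstacle will be arranging the coding so that it \emph{simultaneously} lands inside the intersection of all the specified subclasses (lattice, complemented, non-distributive) while still reflecting punctuality, and doing so \emph{without delay} in the relevant sense — i.e.\ the construction that sends a computable poset tree to its lattice hull must itself be computable, which is routine, but the harder half is the reflection: given a \emph{punctual} lattice isomorphic to the hull, one must extract a \emph{punctual} poset tree, and this requires that the original poset tree be recoverable by a primitive-recursive (not merely computable) procedure from the lattice operations. This is why it matters that the extra elements are finite in number and identifiable, and that $\leq$, $P_{\geq x}$, the branch structure, etc.\ are all primitive recursive in $\wedge$ and $\vee$; I would make this explicit. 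A secondary subtlety is that ``poset tree'' in Definition~\ref{def:partially_ordered_tree} demands that $P_{\geq x}$ be \emph{finite}, so I must verify the hull construction does not accidentally create an element infinitely far from $\top$ — it does not, since the only new elements sit at distance $1$ from $\top$ or $0$ from $\bot$. Once these checks are in place, each bullet of the corollary follows by composing the relevant coding with Theorem~\ref{thmPosetTrees}, and the final remark about the two interpretations follows from primitive-recursive interdefinability.
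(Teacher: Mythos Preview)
Your overall strategy—encode the non-punctually-presentable poset tree from Theorem~\ref{thmPosetTrees} into each target class and argue that a punctual copy of the encoding would yield a punctual copy of the tree via $x \leq y \iff x \wedge y = x$ (or the dual)—is exactly right, and matches the paper. What you are missing is the structural observation that makes the paper's argument much shorter: a poset tree $(P,\leq)$ with greatest element is \emph{already} a join semilattice, with nothing adjoined. Since $P_{\geq x}$ and $P_{\geq y}$ are finite chains both containing the root, their intersection is a nonempty finite chain whose minimum is $\sup\{x,y\}$. So for part~(i) the tree itself is the desired computable join semilattice (and its order-reversal is the meet semilattice); for part~(ii) one adjoins only a fresh $\bot$, and the result is immediately a lattice: joins are closest common ancestors, meets of incomparables are $\bot$. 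No closure under ``missing joins'' is needed, and no extra $\top$ is needed since the root already serves that role.

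Your alternative for the lattice case—setting $x \vee y = \top$ whenever $x,y$ are incomparable—does not give a lattice: associativity fails. With root $r$, a child $p$ of $r$, and children $a,b$ of $p$, one gets $(a \vee b) \vee p = \top \vee p = \top$ but $a \vee (b \vee p) = a \vee p = p$. The paper avoids this by using the genuine join (closest common ancestor), which is associative precisely because it is the supremum in the underlying order. With the lattice built correctly as tree-plus-$\bot$, complementation comes for free because the specific tree constructed in Theorem~\ref{thmPosetTrees} has a branching root: if $l,r$ are its two children, then every $a\notin\{\bot,\text{root}\}$ lies below one of them and is complemented by the other. Non-distributivity is likewise automatic: for siblings $a,b$ under $p$ and any $c$ not below $p$, one has $(a \wedge b)\vee c = \bot \vee c = c$ while $(a\vee c)\wedge(b\vee c)$ is the common ancestor of $p$ and $c$, strictly above $c$. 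So there is no need to seed an $N_5$ or $M_3$ by hand.
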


The remaining open question is whether the class of distributive lattices is punctually robust. The strategy used to obtain Theorem \ref{thmPosetTrees} is closely related to the `non-distributivity' of the computable poset tree we construct, and is therefore unlikely to provide a solution. To resolve the question, a different approach would be required, if distributive lattices are punctually robust at all.

The third contribution deals with modal algebras which are obtained from Boolean algebras by adding a modality operator. Boolean algebras constitute a classical object in mathematical logic. It is well-known that the class of Boolean algebras provides algebraic semantics for the classical propositional calculus (see, e.g., \cite{Rasiowa-Sikorski}). On the other hand, countable Boolean algebras are well-studied in computable structure theory---we refer to the monographs~\cite{Goncharov-Book,Downey-Melnikov-Book} for a detailed exposition of results in this area. 

Intuitively speaking, here we treat a \emph{modality} as a formal logical operator which expresses the possibility of formulas: $\diamondsuit p$ means that the statement $p$ is possible. More formally, one typically considers additional logical connectives $\diamondsuit$ (the possibility operator) and $\square$ (the necessity operator), and defines an appropriate modal propositional calculus which extends the classical propositional calculus (see, e.g., the monograph~\cite{CZ-Modal-Logic}). Modal algebras provide algebraic semantics for (normal) modal logics. The formal definition of a modal algebra is given in Section~\ref{sect:modal-alg}. In contrast to Boolean algebras, there are only few known results about computable presentations of modal algebras---here we cite~\cite{KK-12,Bazh-16}. 

It turns out that adding modality $\diamondsuit$ to the language of Boolean algebras enriches the computational content of the class: while (pure) Boolean algebras are punctually robust (informally, they are computationally `tame') \cite{kalimullin_algebraic_2017}, modal algebras are not punctually robust (i.e., some of them can exhibit a `wilder' subrecursive behavior). In Section \ref{sect:modal-alg} we prove:
\begin{restatable}{theorem}{thmModalAlgebra}\label{theo:main-modal-algebra}
 The class of modal algebras is not punctually robust.
\end{restatable}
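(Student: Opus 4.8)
The plan is to run the diagonalization outlined above. Fix a computable enumeration $\mathcal A_0,\mathcal A_1,\dots$ of punctual structures that exhausts, up to isomorphism, all punctual modal algebras (ill-formed indices are neutralised by the usual monitoring conventions, so each $\mathcal A_i$ may be treated as a genuine punctual modal algebra). We build a computable modal algebra $\mathcal B$ in stages, with finite approximations $\mathcal B_s$, meeting for every $i$ the requirement
\[
R_i\colon\quad \mathcal B\not\cong\mathcal A_i .
\]
Everything turns on one asymmetry: $\mathcal B$ is only required to be \emph{computable}, so when a fresh element $x$ is enumerated we may leave $\diamondsuit x$ uncommitted for a while, whereas each $\mathcal A_i$ is \emph{punctual}, so the value $\diamondsuit_{\mathcal A_i}(c)$ is fixed as soon as $c$ appears. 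Since pure Boolean algebras are punctually robust, the entire ``wild'' behaviour has to be carried by $\diamondsuit$, and this is exactly what the construction exploits.

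\emph{The template.} The Boolean-algebra reduct of $\mathcal B$ is taken atomic with a computable set of atoms (for instance a copy of the algebra of finite and cofinite subsets of a countable set). We single out an $\omega$-chain of atoms $a_0,a_1,a_2,\dots$, the \emph{spine}, and let $\diamondsuit$ act on it as a downward shift, $\diamondsuit a_0=0$ and $\diamondsuit a_{n+1}=a_n$, with $\diamondsuit$ defined on the remaining elements in a routine additive manner (for example sending every cofinite element to $1$); this is a normal additive operator which, provided the spine behaves, makes $\mathcal B$ rigid, so that an isomorphism onto any $\mathcal A_i$, should one exist, is unique. On top of this template each requirement installs its own finite gadget --- a handful of fresh atoms, tied to the spine so as to be identifiable inside the algebra, carrying one ``free'' $\diamondsuit$-value to be committed late.

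\emph{Strategy for $R_i$.} We enumerate a fresh \emph{witness} atom $u_i$ (with a small fixed amount of auxiliary structure linking it to the spine), leaving $\diamondsuit u_i$ uncommitted; its two competing values are $0$ and $1$. We then keep extending $\mathcal B$, watching $\mathcal A_i$, and maintaining a finite partial isomorphism $\psi$ from the committed part of $\mathcal B$ into $\mathcal A_i$. Either, at some stage, the part of $\mathcal A_i$ seen so far becomes incompatible with the committed part of $\mathcal B$ (no extension to a modal algebra respecting the matched spine), so $\mathcal A_i\not\cong\mathcal B$; we set $\diamondsuit u_i:=0$ and $R_i$ is met once and for all. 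Or the matching survives and at some stage forces $\psi(u_i)$ to be a concrete element $c$ of $\mathcal A_i$, whose value $\diamondsuit_{\mathcal A_i}(c)$ is, by punctuality, already present; we then set $\diamondsuit u_i$ to whichever of $0,1$ is \emph{incompatible} with the equation $\psi(\diamondsuit u_i)=\diamondsuit_{\mathcal A_i}(c)$ --- the images $\psi(0)=0_{\mathcal A_i}$ and $\psi(1)=1_{\mathcal A_i}$ being already determined --- which, by rigidity, destroys every isomorphism $\mathcal B\to\mathcal A_i$. In both cases $\diamondsuit u_i$ eventually gets committed, so $\mathcal B$ stays computable.

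\emph{Combining, and the main obstacle.} The requirements are treated in order of priority, each $R_i$ working with fresh atoms, and one checks routinely that the limit $\mathcal B$ is a well-defined \emph{computable} --- but, as intended, not primitive recursive --- modal algebra; meeting every $R_i$, it is then not punctually presentable. The genuinely delicate point, and the reason the proof needs real work rather than a one-line gadget, is making the matching truly \emph{forced}: the properties by which one would most naturally identify the elements in play (``being an atom'', ``sitting at spine-depth $n$'') are not finitary, so one must run the matching through carefully chosen persistent, existential conditions, track the finitely many partial isomorphisms still alive at each stage, and --- since a premature diagonalization aimed at a spurious candidate need not kill the genuine isomorphism --- let $R_i$ make repeated attempts against the surviving candidates. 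Showing that $R_i$ is thereby eventually satisfied (so that only finitely many of its attempts really matter), and arranging all the gadgets and the bookkeeping so that this goes through without clashes across requirements, is the crux; granting it, the remaining verifications are routine.
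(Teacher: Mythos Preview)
Your proposal is \emph{not} the paper's argument, and in its present form it has a genuine gap at exactly the point you flag as ``the crux.''

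The paper does not try to pin down the image of a single witness element at all. Instead it builds $\mathcal{A}^{\ast}$ over the Boolean algebra $B(\mathbb{N})\times B(\mathbb{N})$ so that the modality is completely determined by a map $g$ on atoms, and $g$ is assembled out of \emph{$p$-cycles} for odd primes $p$, with at most one $p$-cycle per prime. A key lemma shows that every nonzero element in the Fr\'echet ideal then has a forward orbit whose cardinality is a product of \emph{distinct} odd primes, each of which indexes a cycle actually present in $g$. The $R_e$-strategy does the following: inside the punctual adversary $\mathcal{P}_e$ it computes, for enough pairwise disjoint small elements $a_0,\dots,a_M$, the values $f^{(j)}_{\mathcal{P}_e}(a_i)$ up to $L=\prod_{j\le s}p_j$, reads off their orbit sizes, and either detects a violation of the orbit-size invariant, or finds a prime $p_s$ dividing one of these sizes and \emph{permanently forbids} the $p_s$-cycle from entering $\mathcal{A}^{\ast}$. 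The diagonalization is thus against a \emph{global numerical invariant} (the set of primes used), something the adversary cannot hide because $f_{\mathcal{P}_e}$ is total and primitive recursive on elements already present. This is the modal-algebra analogue of the torsion-abelian-group argument, and it is injury-free.

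By contrast, your scheme hinges on the dichotomy ``either $\mathcal{A}_i$ becomes visibly incompatible, or at some stage the partial matching \emph{forces} $\psi(u_i)=c$.'' You never argue that one of these alternatives must occur, and in general it need not. Atomhood and spine-depth are $\Pi_1$ properties even in a punctual structure, so the adversary can delay revealing which of its elements plays the role of $u_i$ indefinitely while remaining consistent with $\mathcal{B}$; meanwhile new plausible candidates for $\psi(u_i)$ keep appearing, so ``repeated attempts against surviving candidates'' has no termination argument. Relatedly, your rigidity claim is not secured: with infinitely many gadget atoms whose only distinguishing datum is $\diamondsuit u\in\{0,1\}$, those with the same value are automorphic, so diagonalizing against one candidate need not kill the actual isomorphism. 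Unless you specify concrete auxiliary structure that makes each $u_i$ definable by a \emph{positive, finitely checkable} condition inside any punctual copy---which would essentially force you back to an orbit-length style invariant---the argument does not close.
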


Last but not least, we provide some elementary results about robustness of infinite binary trees (Appendix \ref{app:succ-trees}) and prefix trees (Appendix \ref{app:pref-trees}).


\section{Relational Predecessor Trees with Ordering}\label{sec:rpo}

If $P(x,y)$ holds and $x\neq r$, then we say that $x$ is a \emph{child} of $y$ (and $y$ is a parent of $x$). Notice that here we assume that the root $r$ does not have parents. If $x$ is a child of $y$ and $y$ is a child of $z$, then we say that $x$ is a \emph{grandchild} of $z$. An analogous convention holds for all representations of trees under consideration in this article.

\begin{definition}
    If $\mathcal{T}$ is a tree (r.p.o. tree or a different type) and $a$ is a node of that tree, then we define the depth of $a$ in the following way: for the root of the tree $d^{\mathcal{T}}(r)=0$ and whenever $b$ is a child of $a$, then $d^{\mathcal{T}}(b)=d^{\mathcal{T}}(a)+1$. In case of representations of trees with an empty node $e$ we also define $d^{\mathcal{T}}(e)=-1$. We also define $\mathcal{T}^{\leq n}$ to be the part of $\mathcal{T}$ consisting only of nodes with a depth less or equal $n$. 
\end{definition}

    Because of the space limit, here we only prove a specific case of Theorem~\ref{theo:predecessor-trees-with-ordering}. The full proof of Theorem~\ref{theo:predecessor-trees-with-ordering} appears in Appendix \ref{appendix:RPO-trees}.

\begin{theorem}\label{ptime}
    The class of r.p.o.~trees of unbounded depth is P-TIME robust.

\end{theorem}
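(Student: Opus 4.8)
The plan is to give a direct construction of a computable r.p.o.~tree $\mathcal{B}$ of unbounded depth together with a uniform procedure that, given any fully P-TIME structure $\mathcal{A}_i$ in the signature $(T,P,<,r)$, builds a fully P-TIME copy of $\mathcal{B}$. Since every isomorphism type of a fully P-TIME r.p.o.~tree appears among the $\mathcal{A}_i$ (as usual via a computable enumeration of P-TIME Turing machines with clocks), it suffices to show how to produce, from an arbitrary computable r.p.o.~tree of unbounded depth, an isomorphic fully P-TIME one. I would first fix a convenient computable presentation of the target tree and then stage the output domain $\mathbb{B}$ so that, at stage $s$, exactly the strings enumerated by stage $s$ have been committed, their $P$-edges, their $<$-comparabilities among siblings, and the root marker have all been decided, and no decision is ever revisited.

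The key steps, in order, are as follows. First, normalize the given computable r.p.o.~tree: pass to a presentation in which new nodes arrive one at a time, each new node is declared either to be the root, or a child of some already-present node, and is inserted into the sibling order among the current children of its parent at a computably-determined position; because depth is unbounded we may also, by padding, arrange that by stage $s$ the tree has depth at least some increasing computable function of $s$. Second, set up the P-TIME copy: reserve an initial block of binary strings for the "current" tree and maintain an (efficiently computable) bijection between nodes seen so far and strings used so far; when the source tree adds $k$ new nodes between stages, we add the next $k$ strings in length-lexicographic order to the copy, define $P$ and $<$ on them to mirror the source (the parent of a new node is already present, so its code is known; siblinghood and the induced linear order on each sibling set are copied verbatim), and leave $P,<$ false on all pairs not yet decided. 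Third, verify that the resulting relations are P-TIME: the crucial point is that to decide $P(\sigma,\tau)$ or $\sigma<\tau$ on inputs of length $n$, one only needs to simulate the source computation up to a stage bounded by a polynomial in $2^{n}$? — no; rather, one arranges the staging so that all strings of length $\le n$ are committed by a stage whose cost of simulation is polynomial in the input length, which is where the unbounded-depth padding and a careful "fast enough" enumeration schedule are used. Fourth, check that $\mathcal{B}$ is genuinely an r.p.o.~tree of unbounded depth and that the copy is isomorphic to it by construction.

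The main obstacle I expect is the P-TIME bookkeeping in the third step: one must ensure both that the copy's domain is exactly $\mathbb{B}$ (every string eventually used, with no gaps) and that the relations are decidable in time polynomial in the length of the input strings, which forces the enumeration of the source tree to be "dense enough" — enough new strings absorbed per stage — while the source is only guaranteed to be computable, not punctual. This is precisely where the hypothesis of \emph{unbounded depth} does the work: it lets us interleave genuine growth of the source with the insertion of long auxiliary spine-nodes, so that whenever we are forced to allocate a short string we always have a legitimate r.p.o.~node (a fresh deep leaf, or a new child extending the spine) ready to name, avoiding the "waiting" that would otherwise break punctuality/feasibility. A secondary, more routine obstacle is handling the sibling order: inserting a new child between two existing siblings must be realized by a P-TIME-decidable linear order on the (finite, but unboundedly large) sibling set; this is standard and can be done by keeping, for each parent, an efficiently updatable dense-order labeling (e.g.\ rationals coded as strings) of its children, so that $<$ on a pair of siblings reduces to comparing their labels.
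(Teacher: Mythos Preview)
Your proposal has a genuine gap at the heart of the third and fourth steps. You write that ``by padding, arrange that by stage $s$ the tree has depth at least some increasing computable function of $s$'' and that unbounded depth lets you ``interleave genuine growth of the source with the insertion of long auxiliary spine-nodes, so that whenever we are forced to allocate a short string we always have a legitimate r.p.o.\ node \dots\ ready to name.'' Neither of these works. The source tree is merely \emph{computable}, so its depth may grow arbitrarily slowly; if you redefine ``stage $s$'' to mean ``run the source until depth $\ge f(s)$,'' a single stage can take non-polynomial (indeed non-primitive-recursive) time, which destroys the P-TIME bound you need. And you cannot insert auxiliary spine-nodes: you are building an \emph{isomorphic} copy, so every node of $\mathcal{R}$ must correspond to an actual node of $\mathcal{T}$, and you do not know which leaves of $\mathcal{T}$ will ever acquire children until the source computation reveals it. So at a stage where the source has revealed nothing new, you are forced to commit a fresh short string to the domain of $\mathcal{R}$ with no source node to map it to and no way to know what its parent or sibling-position will be.

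The missing idea, which the paper supplies, is a \emph{reservoir}: at each stage you take the $\sqsubset$-least unused string, declare it pairwise \emph{unrelated} (no $P$, no $<$) to everything already present, and set it aside without yet assigning it a place in the tree. This purely negative commitment is cheap and P-TIME, and it is compatible with many future placements. The reservoir is drained only when the source reveals a \emph{grandchild event}: nodes $a,b,c$ with $P(b,a)$, $P(c,b)$, where $\varphi(a)$ is already defined but $\varphi(b),\varphi(c)$ are not. Then the $\sqsubset$-least reservoir string is assigned to $c$, while $b$ (and all other newly revealed nodes) get fresh \emph{long} strings of length $\ge s$. The reservoir string's only positive relation is $P(\cdot,\varphi(b))$ with a long string, so no previously declared negative fact is violated; and since two reservoir strings were declared $<$-incomparable, they can never become siblings---but they never need to, because each gets its own fresh long parent. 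Unbounded depth guarantees infinitely many grandchild events, so the reservoir is eventually emptied and $\mathrm{dom}(\mathcal{R})=\mathbb{B}$. Finally, every positive $P$- or $<$-fact involves at least one long string, so to decide a relation on inputs of length $\le n$ you simulate only $n$ stages, each of constant cost---this is what makes the relations P-TIME. Your dense-label scheme for sibling orders is unnecessary once this is in place.
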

\begin{proof}    We fix a computable $\mathcal{T}=(\mathbb{B},P^{\mathcal{T}},<^{\mathcal{T}},r^{\mathcal{T}})$. We carry out the construction in stages $s=0,1,\dots$. During each stage we perform a fixed number $l$ of steps of some algorithm calculating $\mathcal{T}$. We can assume that during each stage the algorithm at most once returns a true statement of the type either $P^{\mathcal{T}}(a,b)$ or $a<^{\mathcal{T}}b$ for some $a,b \in \mathbb{B}$ and that in each such case $a$ and $b$ are connected to the root of the current approximation of $\mathcal{T}$.

    We are going to construct a P-TIME copy $\mathcal{R} \cong \mathcal{T}$. At the end of each stage $s$ structures $\mathcal{T}_s$ and $\mathcal{R}_s$ are going to be our current approximations of $\mathcal{T}$ and $\mathcal{R}$ respectively. We observe that according to the construction described below both of them will be rooted at every stage but at some stages they could be not isomorphic to each other. We are also going to construct an isomorphism $\varphi: \mathcal{T} \to \mathcal{R}$.

    To ensure that the construction is P-TIME, if for some time we do not discover any new relations in $\mathcal{T}$, we can delay establishing positive cases of relations in $\mathcal{R}$ by putting new fresh strings into a reservoir set $A$. The strings in that set will not be immediately put in a fixed place in the tree $\mathcal{R}$ but we will guarantee that none of them can be in either relation with each other. Thus, while waiting to discover positive cases of relations in $\mathcal{T}$, we will keep adding negative cases of relations to $\mathcal{R}$. Since the tree has infinite depth, we are guaranteed to be able to add all the strings from the reservoir to the P-TIME tree at some point.

    We order $\mathbb{B}$ in the following way: $\alpha \sqsubset \beta$ if either $lh(\alpha)<lh(\beta)$ or $lh(\alpha)=lh(\beta)$ and additionally on the first position that they differ, $\alpha$ has $0$ and $\beta$ has $1$.

    We construct the reservoir set $A$. Initially $A = \emptyset$. At the end of each stage $s$ we take the $\sqsubset$-least sequence $\alpha$ from outside $\mathcal{R}_s$, add $\alpha$ to $A$ and declare that $\alpha$ is short.

    We want to ensure that during infinitely many stages $\mathcal{T}_s \cong \mathcal{R}_s$. We only intend to update $\mathcal{R}_s$ to satisfy this condition if we discover $a,b,c \in \mathcal{T}$ such that $P^{\mathcal{T}}(b,a)$ and $P^{\mathcal{T}}(c,b)$ and the isomorphism $\varphi(a)=\alpha$ is defined while $\varphi(b)$ and $\varphi(c)$ are not defined. Then we define $\varphi(c)=\beta$ where $\beta$ is the $\sqsubset$-least element of $A$ and we remove $\beta$ from $A$.

    For every other $d \in \mathcal{T}_s \setminus \varphi^{-1} \lbrack \mathcal{R}_s \rbrack$: $d_1,\cdots,d_j$, we take the $\sqsubset$-least sequences of length at least $s$: $\delta_1,\cdots,\delta_j$ and define $\varphi(d_i)=\delta_i$ for $1 \leq i \leq j$. We declare that all $\delta_i$ are long.

    \paragraph*{Verification}
    \begin{lemma}
    \label{LemmaCong}
        $\mathcal{R}_s \cong \mathcal{T}_s$ infinitely many times.  
    \end{lemma}

    \begin{proof}
      Suppose that the depth of $\mathcal{T}_s$ is $d_s$ at the end of some stage $s$, $\mathcal{R}_s$ is isomorphic to $\mathcal{T}_s$ and $\mathcal{R}_s$ does not change later. 
      
      Then the depth of $\mathcal{T}_t$ is at most $d_s+1$ for all $t \geq s$. This is because otherwise $\mathcal{T}_t$ would have an element $a$ of depth $d_t>d_s+1$ such that $a_0,\cdots,a_{d_t}$ is a path in $\mathcal{T}_t$ starting from the root  $a_0=r$ until $a_{d_t}=a$. In this path some $a_i$ is the last element such that $\varphi(a_i)=\alpha_i$ is defined. Since this path has not been prolonged in $\mathcal{R}_t$ beyond $\alpha_i$ we conclude that $i=d_t$ or $i=d_t-1$. Hence the depth of $\alpha_i$ is either $d_t$ or $d_t-1$. Also since $\alpha_i$ is in $\mathcal{R}_s$ we conlude that the depth of $\alpha_i$ is less or equal $d_s$. Hence $d_t-1 \leq d_s$ which is impossible.

      Since the depth of $\mathcal{T}_t$ is less or equal $d_s+1$ we obtain a contradiction with the assumption that this tree is unbounded.
    \end{proof}
The above lemma implies that $\mathcal{T} \cong \mathcal{R}$.
    \begin{lemma}
        The domain of $\mathcal{R}$ is $\mathbb{B}$.
    \end{lemma}

    \begin{proof}
        We observe that $\sqsubset$ is a well-ordering on $\mathbb{B}$. We suppose that $\alpha$ is the $\sqsubset$-least sequence not in the domain of $\mathcal{R}$.

        Then $\alpha$ was never added to $A$ because every sequence from $A$ is eventually added to $\mathcal{R}$ (since $\mathcal{R}$ gets extended infinitely many times and each time the $\sqsubset$-least sequence from $A$ is used to perform such extension). 
        The sequence $\alpha$ can only avoid getting added to $A$ if $\alpha$ is long and was earlier added to $\mathcal{R}$ but this is also a contradiction.
    \end{proof}

    \begin{lemma}
        $P^{\mathcal{R}}$ and $<^{\mathcal{R}}$ are P-TIME.
    \end{lemma}

    \begin{proof}
        We observe that if $\alpha$ and $\beta$ are in either relation from the signature, then either of them is long (maybe both of them). We assume that $\beta$ is long and that $lh(\alpha) \leq lh(\beta)=s$.

        Then to discover that $\alpha$ and $\beta$ are in a relation we perform the construction until stage $s$ and check if $\mathcal{R}_s$ confirms that they are. We observe that if $\alpha$ and $\beta$ do not enter a relation at the latest during stage $s$, then they do not later.

        We show that the above procedure is P-TIME. To perform the construction until stage $s$ we perform $l \times s$ steps of the algorithm calculating $\mathcal{T}$ and we add at most $s$ new sequences to $\mathcal{R}$, the length of each of them with a rough upper bound of $s$. Hence the algorithm is P-TIME.
    \end{proof}
Theorem \ref{ptime} is proven.\end{proof}
\section{Poset Trees}\label{sec:partially_ordered_trees}
Let $(T,\leq)$ be a poset tree. We say that elements $x , y \in T$ are adjacent, $Adj(x,y)$, if and only if $x < y \wedge \neg \exists z \in T \, (x < z < y)$, where $<$ is the strict partial order induced by $\leq$. We say $x \in T$ is a \emph{branching node} of $T$ (or $x$ branches in $T$) if it has at least two children in $T$. Such an $x$ induces a unique subtree of $T$, called a \emph{branching} and defined as $br(x,T) = \{y \in T: Adj(y,x) \} \cup T_{\geq x}$. If $x$ is a branching node, we define $|br(x,T)|$, the length of $br(x,T)$, as the length of $T_{\geq x}$. By a \emph{binary branching} we mean a tree with exactly two leaves sharing a parent.  
We say that $(T,\leq)$ is \emph{uniquely branching} if for every $n \in \mathbb N$, there exists at most one branching node $x \in T$ such that $|T_{\geq x}| = n$.
    We say that a \emph{branching} node $x \in T$ belongs to the \emph{level $n$} of $T$ if the set $T_{> x}$ contains precisely $n$ branching nodes. We denote the level $n$ of $T$ by $T[n]$ and call its members $n$-level nodes. 
Keep in mind that we number levels $0,1,\dots$. Therefore, the first level is level $0$. Level $T[n]$ may be empty but if $T[n]\neq \emptyset$ then $T[k] \neq \emptyset$, for $k < n$. 
Let $(T,\leq)$ be a tree such that its level $i$, $i \in \mathbb N$, is nonempty. We define $T_{[\leq i]}$, the \emph{$i$-level subtree of $T$}, as the least subtree of $T$ containing all nodes at levels $\leq i$ together with their children. 
Notice that $T_{[\leq i]}$ is the sum of the branchings $br(x,T)$ for all $x \in T[j]$ such that $j \leq i$.
By $r(T)$ we denote the root of the tree $T$.

\begin{lemma}\label{lemma:binary}
    Let $(T, \leq)$ be a finite poset tree in which every internal node has exactly two children and let $F \subseteq T$ be such that, at every level of $T$ except the first, at most one node is in $F$. Then there exists a leaf $y \in T$ such that $T_{\geq y} \cap F = \emptyset$. 
\end{lemma}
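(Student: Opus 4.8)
The plan is to prove the lemma by induction on the number of branching nodes of $T$, strengthening the statement so that the inductive hypothesis also carries the requirement $r(T)\notin F$. I read the hypothesis as including this — it is the natural sense of exempting ``the first level'', and it is in any case forced, since $r(T)$ lies on $T_{\ge y}$ for \emph{every} leaf $y$, so without it no $y$ could witness the conclusion — and, more to the point, it is precisely what makes the recursion self-supporting. I also take the hypothesis to mean that $F$ consists of branching nodes (this is implicit in the level-based phrasing; allowing leaves into $F$ makes the statement false already for a single cherry), with at most one member of $F$ at each level $\ge 1$.

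For the base case, if $T$ has no branching node then $T$ is a single point $y$, a leaf with $r(T)=y\notin F$, so $T_{\ge y}\cap F=\emptyset$. For the inductive step, write $r=r(T)$, let $c_0,c_1$ be its two children, and let $T_0,T_1$ be the subtrees of $T$ rooted at $c_0$ and $c_1$. The key bookkeeping point is that the level of a branching node drops by exactly $1$ on passing from $T$ to the subtree $T_i$ that contains it, since precisely one branching node (namely $r$) is removed from above it; hence $F_i:=F\cap T_i$ still has at most one member at each level $\ge 1$ of $T_i$, and $T_i$ has strictly fewer branching nodes than $T$. Now, whenever $c_0$ and $c_1$ are branching they both lie at level $1$ of $T$, so at most one of them belongs to $F$; choose $c_1\notin F$ (automatic if $c_1$ is a leaf). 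Applying the strengthened inductive hypothesis to $T_1$ with $F_1$ gives a leaf $y$ of $T_1$ with $(T_1)_{\ge y}\cap F_1=\emptyset$. Since $y$ is also a leaf of $T$ and $T_{\ge y}=(T_1)_{\ge y}\cup\{r\}$ with $(T_1)_{\ge y}\subseteq T_1$, we get $(T_1)_{\ge y}\cap F=(T_1)_{\ge y}\cap F_1=\emptyset$ and $r\notin F$, hence $T_{\ge y}\cap F=\emptyset$.

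The step I expect to be the main obstacle is not the combinatorics of the split but pinning down the right statement to induct on. The literal statement does not reproduce itself under this recursion: once a subtree's root lies in $F$, that subtree has no leaf avoiding $F$ and the argument stalls. Carrying the invariant $r(T_i)\notin F_i$ down the recursion repairs this, and the hypothesis ``at most one member of $F$ at level $1$'' is exactly what always permits descending into a child outside $F$, thereby preserving the invariant. Two minor further points to verify are the degenerate case where a child $c_i$ is a leaf rather than a branching node, and that the level shift on passing to a subtree is exactly by one — which is what lets $F_i$ inherit the at-most-one-per-level hypothesis.
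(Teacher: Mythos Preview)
Your proof is correct and is essentially the same argument as the paper's: the paper phrases it as a direct descent from the root (at each non-leaf $x$ with $T_{\ge x}\cap F=\emptyset$, the two children are at the same level so one lies outside $F$, and you pass to it), while you unwind the same descent as an induction on the number of branching nodes. Your more careful treatment of the hypotheses (that $r(T)\notin F$, that $F$ consists of branching nodes, the level shift on passing to a subtree) makes explicit what the paper leaves implicit, but the underlying idea is identical.
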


\begin{claimproof}
    By assumption, $r(T) \notin F$. Let $x \in T$ be such that $T_{\geq x} \cap F = \emptyset$. The node $x$ has two children which are at the same level, so one of them, denote it by $x'$, is outside $F$. Therefore, $T_{\geq x'} \cap F = \emptyset$. This way we find a leaf $y$ such that $T_{\geq y} \cap F = \emptyset$.
\end{claimproof}
    Let $T, \hat T$ be disjoint finite trees and let $z \in T$ be a leaf. $T'$ is obtained from $T$ by attaching $\hat T$ to $z$ in $T$ if $dom(T') = dom(T) \cup (dom(\hat T) - \{r(\hat T)\})$ and $x,y \in dom (T')$ satisfy $Adj_{T'}(x,y)$ iff $Adj_T(x,y) \vee Adj_{\hat T}(x,y) \vee Adj_{\hat T}(x, r(\hat T)) \wedge y = z$.

We are ready to start the proof of Theorem \ref{thmPosetTrees}. We build a computable poset tree $\mathcal T = (\mathbb N, \leq^T)$ such that for every $ i \in \mathbb N$: \begin{equation}\text{ $\mathcal P_i = (\mathbb N, \phi_i^{(2)})$ is a poset tree} \implies \mathcal T  \not\cong \mathcal P_i.\tag{$R_i$}\end{equation}
Here, $(\phi_i^{(2)})_{i\in\mathbb{N}}$ is a computable list of all binary primitive recursive functions. 

Intuitively, the strategy will work as follows. We wait (in a dovetail fashion) until $\mathcal P_i$ shows large enough fragment $P$ of itself (all nodes of $\mathcal P_i$ up to level $i$) isomorphic to the corresponding fragment of $\mathcal T$ that we construct (otherwise, we are done with $\mathcal P_i$ in the limit). We enumerate fresh elements into $\mathcal P_i$ so that the current $\mathcal P_i$ outnumbers the current approximation of $\mathcal T$. These fresh elements form subtrees of $\mathcal P_i$ attached to the leaves of $P$. We now apply the pigeonhole principle: one of the subtrees must have more elements than the corresponding subtree in $\mathcal T$---we block the growth of that part in $\mathcal T$ and make $\mathcal P_i$ non-isomorphic to $\mathcal T$.

$\mathcal T$ will be a uniquely branching poset tree with infinitely many levels . Alongside, we maintain a dynamic set $F$ of elements of $\mathcal T$ whose role is to block the growth of the tree. At odd stages the tree grows wherever $F$ permits. At even stages, the strategies for $\mathcal P_i$ monitor the relationship between $\mathcal T$ and $\mathcal P_i$ and put or withdraw elements from $F$ to satisfy $R_i$. At any given stage $s'$ we are dealing with an approximation $\mathcal P_{i,s'}$ of $\mathcal P_{i}$ defined as follows: $\mathcal P_{i,s'} = (N_{i,s'}, \leq_{i,s'})$, where $N_{i,s'} = \{0,1,\dots, s'-1\}$ and for all $x,y \in N_{i,s'}$, we have $x \leq_{i,s'} y \Leftrightarrow \phi^{(2)}_i(x,y) = 1$.


          A node $y \in \mathcal T$ is \emph{closed} (at a given stage) if there exists $x$ such that $y \leq^T x$ and $x \in F$ at that stage. A node $y$ is \emph{open} if $y$ is not closed.

\proofsubparagraph{Construction}
We proceed in stages. At the end of stage $s$, we have a finite tree $\mathcal T_s$. We set $\mathcal T = \bigcup_s \mathcal T_s$.

 At stage $s=0$, the domain of $\mathcal T_0$ is $T_0=\{0\}$ and $0 \leq^T 0$. No $R_i$ is satisfied at stage $0$, $F = \emptyset$. At each subsequent stage $s>0$, we start with a finite tree $\mathcal T_{s-1}$. 

\emph{Stage $s = 2t+1$ (expansionary).} Let $l_1, l_2, \dots, l_n$ be the open leaves of $\mathcal T_{s-1}$. Construct, for $1 \leq j \leq n$, a binary branching $b_j$ that branches at $x_j$, with the root $l_j$ and other elements fresh (i.e., the domain of $\mathcal T$ is extended by an interval $[|T_s|; b)$, for a least $b$ sufficient to construct the branchings), such that after attaching $b_j$ to $l_j$ in $\mathcal T_{s-1}$, the length of the branching in the new tree starting at $x_j$ has length $H(\mathcal T_{s-1}) + j - 1$. Declare that tree as $\mathcal T_s$ (cf. Figure \ref{fig:extendodd}). Observe that if $\mathcal T_{s-1}$ is uniquely branching, then $\mathcal T_s$ is also uniquely branching.

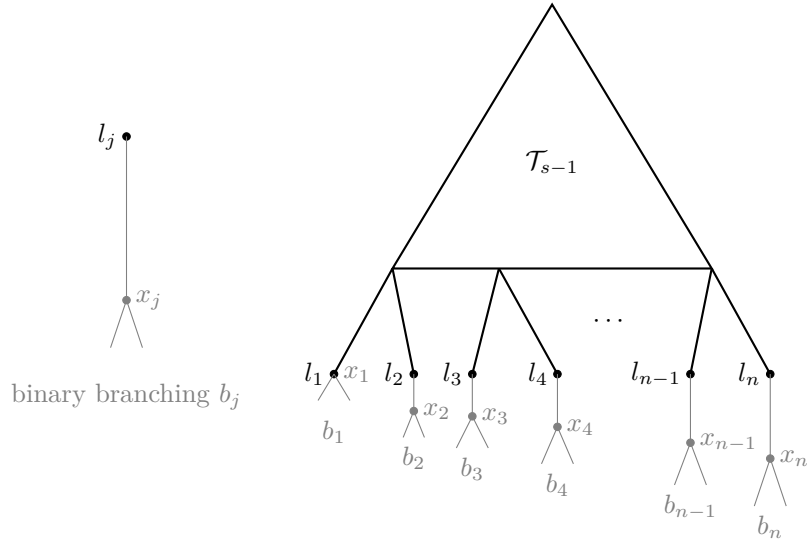
\begin{figure}
    \centering
\begin{tikzpicture}[scale=0.7]

\coordinate (left_root) at (-5, 2.5);
\coordinate (j) at ($(left_root) + (0, -1.5)$);
\coordinate (xj) at ($(j) + (0, -1.6)$);
\coordinate (left_b1) at ($(xj) + (-0.3, -0.9)$);
\coordinate (left_b2) at ($(xj) + (0.3, -0.9)$);

\filldraw[black] (left_root) circle (2pt) node[left] {$l_j$};
\draw[thin, gray] (left_root) -- (xj);
\filldraw[gray] (xj) circle (2pt) node[right] {$x_j$};
\draw[thin, gray] (xj) -- (left_b1);
\draw[thin, gray] (xj) -- (left_b2);
\node[below, gray] at ($(xj) + (0, -1.4)$) {binary branching $b_j$};

\draw[thick] (0,0) -- (3,5) -- (6,0) -- cycle;

\node at (3, 2) {$\mathcal{T}_{s-1}$};

\coordinate (A) at (0,0);
\coordinate (B) at (2,0);
\coordinate (C) at (6,0);

\coordinate (l1) at (-1.1, -2);
\coordinate (l2) at (0.4, -2);
\coordinate (l3) at (1.5, -2);
\coordinate (l4) at (3.1, -2);
\coordinate (ln1) at (5.6, -2);
\coordinate (ln) at (7.1, -2);

\draw[thick] (A) -- (l1);
\filldraw[black] (l1) circle (2pt) node[left] {$l_1$};
\filldraw[gray] (l1) circle (0pt) node[right] {$x_1$};
\draw[thin, gray] (l1) -- ++(-0.3, -0.5);
\draw[thin, gray] (l1) -- ++(0.3, -0.5);
\node[below, gray] at (-1.1, -2.7) {$b_1$};

\draw[thick] (A) -- (l2);
\filldraw[black] (l2) circle (2pt) node[left] {$l_2$};
\coordinate (x2) at ($(l2) + (0, -0.7)$);
\draw[thin, gray] (l2) -- (x2);
\filldraw[gray] (x2) circle (2pt) node[right] {$x_2$};
\draw[thin, gray] (x2) -- ++(-0.2, -0.5);
\draw[thin, gray] (x2) -- ++(0.2, -0.5);
\node[below, gray] at ($(x2) + (0, -0.5)$) {$b_2$};

\draw[thick] (B) -- (l3);
\filldraw[black] (l3) circle (2pt) node[left] {$l_3$};
\coordinate (x3) at ($(l3) + (0, -0.8)$);
\draw[thin, gray] (l3) -- (x3);
\filldraw[gray] (x3) circle (2pt) node[right] {$x_3$};
\draw[thin, gray] (x3) -- ++(-0.3, -0.6);
\draw[thin, gray] (x3) -- ++(0.3, -0.6);
\node[below, gray] at ($(x3) + (0, -0.6)$) {$b_3$};

\draw[thick] (B) -- (l4);
\filldraw[black] (l4) circle (2pt) node[left] {$l_4$};
\coordinate (x4) at ($(l4) + (0, -1.0)$);
\draw[thin, gray] (l4) -- (x4);
\filldraw[gray] (x4) circle (2pt) node[right] {$x_4$};
\draw[thin, gray] (x4) -- ++(-0.3, -0.7);
\draw[thin, gray] (x4) -- ++(0.3, -0.7);
\node[below, gray] at ($(x4) + (0, -0.7)$) {$b_4$};

\draw[thick] (C) -- (ln1);
\filldraw[black] (ln1) circle (2pt) node[left] {$l_{n-1}$};
\coordinate (xn1) at ($(ln1) + (0, -1.3)$);
\draw[thin, gray] (ln1) -- (xn1);
\filldraw[gray] (xn1) circle (2pt) node[right] {$x_{n-1}$};
\draw[thin, gray] (xn1) -- ++(-0.3, -0.8);
\draw[thin, gray] (xn1) -- ++(0.3, -0.8);
\node[below, gray] at ($(xn1) + (0, -0.8)$) {$b_{n-1}$};

\draw[thick] (C) -- (ln);
\filldraw[black] (ln) circle (2pt) node[left] {$l_n$};
\coordinate (xn) at ($(ln) + (0, -1.6)$);
\draw[thin, gray] (ln) -- (xn);
\filldraw[gray] (xn) circle (2pt) node[right] {$x_n$};
\draw[thin, gray] (xn) -- ++(-0.3, -0.9);
\draw[thin, gray] (xn) -- ++(0.3, -0.9);
\node[below, gray] at ($(xn) + (0, -0.9)$) {$b_n$};

\node at (4.1, -1) {$\cdots$};

\end{tikzpicture}
    \caption{Obtaining $\mathcal T_s$ (black and gray) from $\mathcal T_{s-1}$ (black) at odd stage $s$ by attaching binary branchings $b_j$ (left) to $l_j$ in $\mathcal T_{s-1}$, for $1 \leq j \leq n$.}
    \label{fig:extendodd}
\end{figure}

\emph{Stage $s = 2t > 0$ (non-expansionary).} Set $\mathcal T_s = \mathcal T_{s-1}$. For each $\mathcal P_i$, $0 \leq i \leq s$, apply the strategy for $\mathcal P_i$. Structurally $\mathcal T_s$ and $\mathcal T_{s-1}$ are the same, but they may differ with respect to open nodes (that is, $F$ at stage $s-1$ may be different from $F$ at stage $s$).


\proofsubparagraph{$\mathcal P_i$-strategy at stage $s = 2t$.}
Let $s' = \max(s, |T_s| + 1)$. We consider the approximation $\mathcal P_{i,s'}$ of $\mathcal P_i$. Let $s'' = \max(s-2, |T_{s-2}| + 1)$. Therefore, $\mathcal P_{i,s''}$ is the approximation of $\mathcal P_i$ that we considered at the previous non-expansionary stage (if $i \leq s''$).  We say that the strategy is ready if:
\begin{align}
    & i \leq s-2 \text{, that is, $\mathcal P_i$ was considered at the previous non-expansionary stage} \label{item:previous} \\
    & \mathcal{P}_{i,s'} \text{ is a poset tree,} \label{item:treetest} \\
    & \mathcal{T}_{s}[i+1] \neq \emptyset \text{ and } \mathcal{P}_{i,s'}[i+1] \neq \emptyset, \label{item:leveltest} \\
    & (\mathcal{P}_{i,s'})_{[\leq i + 1]}= (\mathcal{P}_{i,s''})_{[\leq i + 1]} ,  (\mathcal{T}_{s})_{[\leq i + 1]}= (\mathcal{T}_{s-2})_{[\leq i + 1]}, \text{ and}  \label{item:idtest} \\
    & (\mathcal{T}_{s})_{[\leq i + 1]} \cong (\mathcal{P}_{i,s'})_{[\leq i + 1]}. \label{item:isomtest} 
\end{align}

If the strategy is not ready, withdraw all $(i+1)$-level nodes from $F$. If the strategy is ready and some node from level $i+1$ is in $F$, we do nothing. 

The remaining case is when the strategy is ready and nodes from level $i+1$ are not in $F$. The situation is illustrated in Figure \ref{fig:strategy1}. Let $f$ be an isomorphism from $(\mathcal{T}_{s})_{[\leq i + 1]}$ to $(\mathcal{P}_{i,s'})_{[\leq i + 1]}$. Let $x_1, \dots, x_n$ be the $(i+1)$-level nodes of $(\mathcal{T}_{s})_{[\leq i + 1]}$. Each $x_j$ has two children which we denote by $l_j$ and $l_j'$. Each $l_j$ and $l_j'$ is the root of a subtree of $\mathcal T_s$ with cardinality $N_j$ and $N_j'$, respectively. In $\mathcal P_{i,s'}$, we have the corresponding subtrees with roots $f(l_j), f(l_j')$ and cardinalities $M_j, M_j'$. Since $(\mathcal P_{i,s'})_{[\leq i + 1]}\cong (T_s)_{[\leq i + 1]}$ but $\mathcal P_{i,s'}$ has more elements than $\mathcal T_s$, the surplus of elements must be placed in the subtrees just described. By the pigeonhole principle, it follows that there exists $j$, $1 \leq j \leq n$, such that $M_j + M_j' > N_j + N_j'$. Take the least such $j$ and put $x_j$ into $F$.

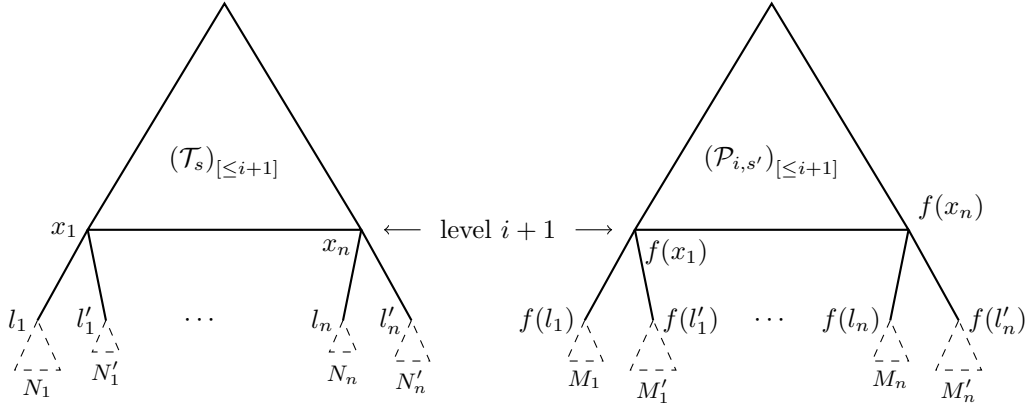
\begin{figure}
    \centering
\begin{tikzpicture}[scale=0.6]


\draw[thick] (0,0) -- (3,5) -- (6,0) -- cycle;

\node at (3, 1.5) {\(\left(\mathcal{T}_{s}\right)_{[\leq i+1]}\)};

\coordinate (A) at (0,0);
\coordinate (C) at (6,0);

\coordinate (A1) at (-1.1, -2);
\coordinate (A2) at (0.4, -2);


\coordinate (C1) at (5.6, -2);
\coordinate (C2) at (7.1, -2);

\coordinate (X1) at (0,0);
\coordinate (XN) at (6,0);

\draw[thick] (X1) -- (A1);
\draw[thick] (X1) -- (A2);


\draw[thick] (XN) -- (C1);
\draw[thick] (XN) -- (C2);

\draw[dashed] (A1) -- ++(-0.5, -1.1) coordinate (A1L) -- ++(1, 0) coordinate (A1R) -- cycle;
\draw[dashed] (A2) -- ++(-0.3, -0.7) coordinate (A2L) -- ++(0.6, 0) coordinate (A2R) -- cycle;


\draw[dashed] (C1) -- ++(-0.3, -0.7) coordinate (C1L) -- ++(0.6, 0) coordinate (C1R) -- cycle;
\draw[dashed] (C2) -- ++(-0.4, -0.9) coordinate (C2L) -- ++(0.8, 0) coordinate (C2R) -- cycle;

\node[left] at (A1) {$l_1$};
\node[left] at (A2) {$l_1'$};


\node[left] at (C1) {$l_{n}$};
\node[left] at (C2) {$l_n'$};

\node[left] at (X1) {$x_1$};
\node[below left] at (XN) {$x_n$};

\node[below, font=\footnotesize] at (-1.1, -3.1) {$N_1$};
\node[below, font=\footnotesize] at (0.4, -2.7) {$N_1'$};


\node[below, font=\footnotesize] at (5.6, -2.8) {$N_{n}$};
\node[below, font=\footnotesize] at (7.1, -2.9) {$N_n'$};

\node at (2.5, -2) {$\cdots$};


\draw[thick] (12,0) -- (15,5) -- (18,0) -- cycle;

\node at (15, 1.5) {\(\left(\mathcal{P}_{i,s'}\right)_{[\leq i+1]}\)};

\coordinate (A2) at (12,0);
\coordinate (C2) at (18,0);

\coordinate (A1_2) at (10.9, -2);
\coordinate (A2_2) at (12.4, -2);


\coordinate (C1_2) at (17.6, -2);
\coordinate (C2_2) at (19.1, -2);

\coordinate (X1_2) at (12,0);
\coordinate (XN_2) at (18,0);

\draw[thick] (X1_2) -- (A1_2);
\draw[thick] (X1_2) -- (A2_2);


\draw[thick] (XN_2) -- (C1_2);
\draw[thick] (XN_2) -- (C2_2);

\draw[dashed] (A1_2) -- ++(-0.4, -0.9) coordinate (A1L_2) -- ++(0.8, 0) coordinate (A1R_2) -- cycle;
\draw[dashed] (A2_2) -- ++(-0.5, -1.1) coordinate (A2L_2) -- ++(1, 0) coordinate (A2R_2) -- cycle;


\draw[dashed] (C1_2) -- ++(-0.4, -0.9) coordinate (C1L_2) -- ++(0.8, 0) coordinate (C1R_2) -- cycle;
\draw[dashed] (C2_2) -- ++(-0.5, -1.1) coordinate (C2L_2) -- ++(1, 0) coordinate (C2R_2) -- cycle;

\node[left] at (A1_2) {$f(l_1)$};
\node[right] at (A2_2) {$f(l_1')$};


\node[left] at (C1_2) {$f(l_{n})$};
\node[right] at (C2_2) {$f(l_n')$};

\node[below right] at (X1_2) {$f(x_1)$};
\node[above right] at (XN_2) {$f(x_n)$};

\node[below, font=\footnotesize] at (10.9, -2.9) {$M_1$};
\node[below, font=\footnotesize] at (12.4, -3.1) {$M_1'$};


\node[below, font=\footnotesize] at (17.6, -2.9) {$M_{n}$};
\node[below, font=\footnotesize] at (19.1, -3.1) {$M_n'$};

\node at (15, -2) {$\cdots$};

\node at (9, 0) {level \(i+1\)};
\draw[->] (7.3, 0) -- (6.5, 0);
\draw[->] (10.7, 0) -- (11.5, 0);

\end{tikzpicture}
    \caption{Schematic representation of $\mathcal T_s$ and $\mathcal P_{i,s'}$ from the point of view of the strategy for $\mathcal P_i$ at stage $s$. Solid lines correspond to the tree $(\mathcal T_s)_{[\leq i + 1]}$ (left) and $(\mathcal P_{i,s'})_{[\leq i + 1]}$ (right). The strategy for $\mathcal P_i$ monitors the cardinalities of the subtrees, which are represented by dashed triangles.}
    \label{fig:strategy1}
\end{figure}


\paragraph*{Verification} By construction, $\mathcal T$ is a binary poset tree and that $\mathcal T$ is uniquely branching. It is also easy to see that the domain of $\mathcal T$ is $\mathbb N$ and that $x \leq^T y$ is computable: it is sufficient to wait in an unbounded loop for a stage $s$ at which $x,y \in \mathcal T_s$ and respond how they are related in $\mathcal T_s$.

\begin{lemma}\label{lem:infinite}
$\mathcal T$ is infinite. Hence, it has infinitely many levels.
\end{lemma}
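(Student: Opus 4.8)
The claim to prove is Lemma~\ref{lem:infinite}: the computable poset tree $\mathcal T$ built in the construction is infinite (and hence has infinitely many levels).

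My plan is to argue that expansionary stages never stop producing genuinely new nodes, which boils down to showing that at every expansionary stage $s = 2t+1$ the tree $\mathcal T_{s-1}$ has at least one \emph{open} leaf. Indeed, at an expansionary stage we attach a fresh binary branching to every open leaf, so if there is always an open leaf, $|T_s| > |T_{s-1}|$ infinitely often and $\mathcal T = \bigcup_s \mathcal T_s$ is infinite; the final sentence (infinitely many levels) then follows because each expansionary stage that acts on an open leaf whose branching sits at a new length creates a new branching node, and unique branching forces these to occupy successive levels.

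So the heart of the matter is: \emph{$F$ never closes off the entire frontier of $\mathcal T$}. Here is where I would use Lemma~\ref{lemma:binary}. At any stage, $\mathcal T_s$ is a finite binary poset tree (every internal node has exactly two children), so Lemma~\ref{lemma:binary} applies once we know that $F \cap \mathcal T_s$ meets each level of $\mathcal T_s$ (except the first) in at most one node. That one-node-per-level bound is exactly the structural invariant maintained by the $\mathcal P_i$-strategies: the strategy for $\mathcal P_i$ only ever manipulates \emph{level-$(i+1)$} nodes of $\mathcal T$, and when it is in its acting case it puts a single node $x_j$ from level $i+1$ into $F$; moreover once that $x_j$ is in $F$, the ``ready and some node from level $i+1$ already in $F$'' clause ensures the strategy does nothing further, so it contributes at most one element of $F$ at level $i+1$ at any time. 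Since distinct strategies own distinct levels, $F$ restricted to $\mathcal T_s$ has at most one element per level. (Level $0$ contains only the root, which is never in $F$ because only level-$(i+1)$ nodes with $i\ge 0$ are ever enumerated; this matches the ``except the first'' caveat of Lemma~\ref{lemma:binary}.) I would state this one-per-level fact as an explicit sub-claim and verify it by inspecting the two places $F$ is modified (withdrawal of all level-$(i+1)$ nodes, and insertion of a single $x_j$).

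Granting the sub-claim, Lemma~\ref{lemma:binary} with $T = \mathcal T_s$ and $F = F \cap T_s$ yields a leaf $y$ of $\mathcal T_s$ with $(\mathcal T_s)_{\geq y} \cap F = \emptyset$, i.e.\ $y$ is open; hence every expansionary stage strictly enlarges the domain, and $\mathcal T$ is infinite. For the ``infinitely many levels'' addendum: $\mathcal T$ is uniquely branching and binary, every internal node branches, and each expansionary stage $s=2t+1$ builds branchings of lengths $H(\mathcal T_{s-1}) + j - 1$ which are strictly larger than any length present before, so each such stage introduces at least one branching node of a fresh length; by unique branching these lengths are in bijection with branching nodes, and an easy induction shows that producing infinitely many branching nodes along ever-deeper branches forces $T[n] \neq \emptyset$ for every $n$. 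The main obstacle I anticipate is pinning down the invariant precisely enough — in particular making sure that a strategy which becomes ``not ready'' and withdraws its level-$(i+1)$ nodes, then later becomes ready again, cannot transiently place two level-$(i+1)$ nodes in $F$; the ``do nothing if some level-$(i+1)$ node is already in $F$'' clause is what rules this out, and I would highlight it carefully.
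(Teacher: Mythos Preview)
Your overall plan is right and matches the paper's: show that at every stage there is an open leaf, via Lemma~\ref{lemma:binary}, using the invariant that $F$ contains at most one node per level (which you justify correctly from the $\mathcal P_i$-strategy description). However, there is a genuine gap in your application of Lemma~\ref{lemma:binary}. You assert that ``$\mathcal T_s$ is a finite binary poset tree (every internal node has exactly two children)'', and this is false. Look at the expansionary step (Figure~\ref{fig:extendodd}): the binary branching $b_j$ attached at $l_j$ consists of a \emph{path} from $l_j$ down to $x_j$ and only then splits; the branchings are deliberately built with increasing lengths $H(\mathcal T_{s-1})+j-1$ to keep the tree uniquely branching. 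The nodes strictly between $l_j$ and $x_j$ are internal nodes with exactly one child, so $\mathcal T_s$ does not satisfy the hypothesis of Lemma~\ref{lemma:binary} and you cannot invoke it directly.

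The paper handles this by first passing to an auxiliary tree $\mathcal T_s'$: for any two branching nodes $x<_T y$ at consecutive levels, collapse the path $\{z:x<_T z\le^T y\}$ to the single node $y$. Then $\mathcal T_s'$ \emph{is} a finite proper binary tree, its levels (in the branching-node sense) coincide with those of $\mathcal T_s$, and your one-per-level invariant for $F$ transfers verbatim; now Lemma~\ref{lemma:binary} yields an open leaf of $\mathcal T_s'$, which corresponds to an open leaf of $\mathcal T_s$. Once you insert this collapsing step your argument goes through. (Alternatively you could salvage the direct argument by observing that $F$ only ever contains branching nodes, so single-child internal nodes can be traversed freely; but you would then be re-proving a variant of Lemma~\ref{lemma:binary}, not applying it.)
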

\begin{claimproof}
It is sufficient to prove that at every stage $s$, $\mathcal T_s$ has at least one open leaf (and thus, the tree is extended at each expansionary stage).
 It is easy to see that $\mathcal T_3$ has levels $0$ and $1$ (it looks like two binary branchings attached to the root). Let $s \geq 3$. Consider $\mathcal T_s'$ obtained from $\mathcal T_s$ as follows: for every branching nodes $x,y \in T_s$ such that $x <_T y$ and with no branching nodes between them, the path $\{ z \in T_s: x <_T z \leq^T y\}$ is collapsed to the single element $y$ (visually speaking, the nodes connecting two branching nodes from adjacent levels are erased). Observe that $\mathcal T_s'$ is a finite proper binary tree. By construction, $\mathcal T_s'$ and $F$ satisfy the premise of Lemma \ref{lemma:binary}. Therefore, $\mathcal T_s'$ has an open leaf. It is immediate that $\mathcal T_s$ has an open leaf as well. 
 It is easy to see that $\mathcal T$ has infinitely many branching nodes and infinitely many levels.
\end{claimproof}

\begin{lemma}
Every requirement is eventually satisfied.    
\end{lemma}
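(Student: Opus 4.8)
I would verify each requirement $R_i$ separately. If $\mathcal P_i$ is not a poset tree, $R_i$ holds vacuously, so assume $\mathcal P_i$ is a poset tree and, aiming for a contradiction, that $\mathcal T\cong\mathcal P_i$. Since $\mathcal T$ is a binary, uniquely branching poset tree with infinitely many levels (Lemma~\ref{lem:infinite}) in which every infinite path passes through infinitely many branching nodes, the same is true of $\mathcal P_i$; in particular, for every $n$ the trees $\mathcal T_{[\leq n]}$ and $(\mathcal P_i)_{[\leq n]}$ are finite and, for $n=i+1$, isomorphic.

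The first step is to pin down the relevant approximations. The sequence $(\mathcal T_s)_{[\leq i+1]}$ is monotone and reaches its limit $\mathcal T_{[\leq i+1]}$ at a finite stage. Because $|T_s|\to\infty$ (Lemma~\ref{lem:infinite}), the parameter $s'=\max(s,|T_s|+1)$ tends to infinity, so $\mathcal P_{i,s'}$ exhausts $\mathcal P_i$; as $(\mathcal P_i)_{[\leq i+1]}$ is finite and every infinite path of $\mathcal P_i$ has infinitely many branching nodes, $(\mathcal P_{i,s'})_{[\leq i+1]}$ converges to $(\mathcal P_i)_{[\leq i+1]}$. Using $\mathcal T\cong\mathcal P_i$ one then checks that each readiness clause \ref{item:previous}--\ref{item:isomtest} eventually holds and stays true; so from some stage on the $\mathcal P_i$-strategy is permanently ready.

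Next I would show that the strategy permanently places an $(i+1)$-level node into $F$. Since only the $\mathcal P_i$-strategy ever touches $(i+1)$-level nodes, and withdrawals happen only at non-ready stages, once readiness is permanent no $(i+1)$-level node already in $F$ is ever removed; and if none is in $F$ at the first such even stage, the strategy acts there, provided the pigeonhole step succeeds. To see that it does, write $\sum_j(M_j+M_j')-\sum_j(N_j+N_j')=(s'-|T_s|)-E$, where $E$ counts the elements of $\mathcal P_{i,s'}$ lying strictly below those leaves of $(\mathcal P_{i,s'})_{[\leq i+1]}$ other than the $f(l_j),f(l_j')$. Any such leaf is the $f$-image of a leaf of $\mathcal T_{[\leq i+1]}$ that is currently closed; since $\mathcal T_{[\leq i+1]}$ has already stabilised, this leaf — and its sibling, which shares its branching parent — is a permanent leaf of $\mathcal T$, and unique branching forces $f$ to map the common branching parent to the unique node of $(\mathcal P_i)_{[\leq i+1]}$ of the matching length; hence the leaf's $f$-image is a genuine leaf of $\mathcal P_i$ and contributes nothing to $E$. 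Thus $E=0$, so $\sum_j(M_j+M_j')-\sum_j(N_j+N_j')=s'-|T_s|\geq 1$, and the strategy acts. Let $\hat s$ be its last action: it puts some $x_j$ into $F$ permanently, having recorded cardinalities with $M_j+M_j'>N_j+N_j'$.

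Finally I would derive the contradiction. From stage $\hat s$ on, $x_j\in F$, so its children $l_j,l_j'$ are closed and never extended; hence the subtrees of $\mathcal T$ rooted at $l_j,l_j'$ have cardinalities exactly $N_j,N_j'$. Since $\mathcal P_{i,s'}$ is an induced substructure of $\mathcal P_i$ and (from readiness at $\hat s$) $(\mathcal P_{i,s'})_{[\leq i+1]}=(\mathcal P_i)_{[\leq i+1]}$, the subtrees of $\mathcal P_i$ rooted at $f(l_j),f(l_j')$ have cardinalities $\geq M_j,M_j'$. An isomorphism $g\colon\mathcal T\to\mathcal P_i$ preserves branching nodes and their levels, so by unique branching it must send $x_j$ to the unique branching node of $\mathcal P_i$ of length $|T_{\geq x_j}|$, which is $f(x_j)$; hence $g$ maps the subtree of $\mathcal T$ below $x_j$ bijectively onto that of $\mathcal P_i$ below $f(x_j)$, giving $N_j+N_j'\geq M_j+M_j'>N_j+N_j'$ — a contradiction. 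So $\mathcal T\not\cong\mathcal P_i$, and $R_i$ holds. The main obstacle is the middle step: certifying that the stage-wise pigeonhole is honest, i.e.\ that the surplus of $\mathcal P_{i,s'}$ over $\mathcal T_s$ sits entirely in the subtrees the strategy monitors. This is delicate precisely because, as noted in the introduction, poset-tree nodes carry no information about their distance from the root, which is why the stability clauses \ref{item:previous} and \ref{item:idtest}, together with unique branching, are indispensable.
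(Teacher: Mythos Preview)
Your proof follows essentially the same route as the paper: argue that readiness eventually becomes permanent, conclude that the strategy places some $(i+1)$-level node $x_j$ into $F$ forever, and then use unique branching to pin any putative isomorphism $g\colon\mathcal T\to\mathcal P_i$ to $f$ on $x_j$ and derive a cardinality contradiction. The paper streamlines your case split by observing that, at the \emph{least} stage $s_0$ from which conditions \eqref{item:previous}--\eqref{item:idtest} hold permanently, the strategy was necessarily not ready at $s_0-2$ (by minimality), hence withdrew all $(i+1)$-level nodes there; so at $s_0$ no such node is in $F$ and the strategy must act. This makes your ``already in $F$'' branch vacuous.

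The substantive addition in your proof is the explicit check that the pigeonhole step succeeds at the action stage (the paper simply asserts this within the construction). This is worth doing, but your argument for $E=0$ contains a slip. You assert that an ``other'' leaf $l$ of $\mathcal T_{[\leq i+1]}$ \emph{and its sibling} are permanent leaves of $\mathcal T$; that is false in general --- the sibling $l'$ may lie on the path to a level-$(k+1)$ branching node and hence be an internal node. Your ``hence the leaf's $f$-image is a genuine leaf of $\mathcal P_i$'' is then not justified: knowing $f(x)$ is determined does not by itself force $f(l)$ to be a leaf of $\mathcal P_i$. The correct fix is to invoke the assumed isomorphism $g\colon\mathcal T\to\mathcal P_i$ directly. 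Since $g$ restricted to $\mathcal T_{[\leq i+1]}$ coincides with $f$ on branching nodes (by unique branching) and on the non-leaf child of each branching node (as it must lead to the unique lower branching node of matching length), one gets $f(l)=g(l)$ when $l'$ is internal; and when both $l,l'$ are permanent leaves, $\{f(l),f(l')\}=\{g(l),g(l')\}$ as the two children of $f(x)=g(x)$. In either case $f(l)$ is a genuine leaf of $\mathcal P_i$, so indeed $E=0$. With this repair your argument goes through and matches the paper's.
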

\begin{claimproof}
    Fix a requirement $R_i$ and assume $\mathcal P_i$ is a poset tree. We may additionally assume that $\mathcal P_i$ is binary and that $\mathcal P_i[i+1]$ is nonempty (that is, $\mathcal P_i$ has level $i+1$). If it does not, then $R_i$ is satisfied, because $\mathcal T$ has infinitely many levels by Lemma \ref{lem:infinite}.

Each of the properties \eqref{item:previous}, \eqref{item:treetest}, \eqref{item:leveltest} and \eqref{item:idtest} corresponds to a computable predicate of two variables, $i$ and $s$. For $\mathcal P_i$ satisfying the assumptions from the previous paragraph, the corresponding predicates hold in the limit---there is $s_0$ such that they are true when $s > s_0$:

\begin{description}
    \item[Property \eqref{item:previous}.] Immediate.
    \item[Property \eqref{item:treetest}.]  Recall that $s' = max(s, |T_s|+1)$. For large enough $s$, $r(\mathcal P_i)$ becomes a member of $\mathcal P_{i,s'}$. Once this happens, $\mathcal P_{i,s'}$ is always a poset tree.
    \item[Property \eqref{item:leveltest}.] The level $i+1$ of $\mathcal T$ is nonempty by Lemma \ref{lem:infinite}, it is present in $\mathcal T_{s}$ for large enough $s$ by construction. Above we assumed that the level $i+1$ of $\mathcal P_i$ is nonempty.
    \item[Property \eqref{item:idtest}.] For large enough $s$, $\mathcal T_{[\leq i +1]} \subseteq \mathcal T_s$ and $(\mathcal P_i)_{[\leq i+1]} \subseteq \mathcal P_{i,s'}$.

\end{description}

    Let $s_0$ be the least stage such that \eqref{item:previous}-\eqref{item:idtest} hold for all stages $s \geq s_0$. In particular, it means that starting from stage $s_0$, we have $(\mathcal P_{i,s})_{[\leq i + 1]}= (\mathcal P_i)_{[\leq i + 1]}$ and $(\mathcal T_{s})_{[\leq i + 1]}= (\mathcal T)_{[\leq i + 1]}$.

    If \eqref{item:isomtest} does not hold at stage $s_0$, then it will not hold anymore, and thus $R_i$ is satisfied. So suppose \eqref{item:isomtest} always holds starting from stage $s_0$ and let $f$ be the isomorphism from $(\mathcal{T}_{s_0})_{[\leq i + 1]}$ to $(\mathcal{P}_{i,s_0})_{[\leq i + 1]}$. Note that this isomorphism is unique because $\mathcal T$ is uniquely branching.
    
    Observe that, at stage $s_0-2$, the strategy for $\mathcal P_i$ was not ready. If it were, then $s_0' = s_0-2$ would be the least stage such that \eqref{item:previous}, \eqref{item:treetest}, \eqref{item:leveltest} and \eqref{item:idtest} hold for all even stages $s \geq s_0'$, which would contradict our choice of $s_0$. Therefore, at stage $s_0 -2$ all $(i+1)$-level nodes were withdrawn from $F$. Hence, at stage $s_0$, the strategy for $\mathcal P_i$ is ready and nodes from level $i+1$ are not in $F$. It means that we put some $x_j$ from level $i+1$ of $\mathcal T$ into $F$. At stage $s_0$, the subtree of $\mathcal T$ with the root $x_j$ has fewer elements than the corresponding subtree of $\mathcal P_{i}$ with root $f(x_j)$. And this property will hold forever because at any subsequent stage, the strategy is ready and $x_j \in F$, and in that case we do nothing. Therefore, at any subsequent expansionary stage, the subtree of $\mathcal T$ with root $x_j$ does not grow. It remains to observe that if there were an isomorphism $g$ from $\mathcal T$ to $\mathcal P_i$, it would have to map $x_j$ to $f(x_j)$, because $\mathcal T$ and $\mathcal P_i$ are isomorphic up to level $i+1$ and $\mathcal T$ is uniquely branching so $f$ has only one legitimate choice for the value $f(x_j)$.    
\end{claimproof}

We recall that a partial ordering $\mathcal{L} = (L, \leq)$ is called a join semilattice, meet semilattice, or lattice if, for any two elements $a, b$ in $\mathcal{L}$, there exists a least upper bound (denoted $a \cup b$), a greatest lower bound (denoted $a \cap b$), or both, respectively.  Join semilattices, meet semilattices, and lattices are also represented as algebraic structures, namely $(L, \cup)$, $(L, \cap)$, and $(L, \cup, \cap)$, respectively, and are characterized by appropriate axioms.  For further details on lattices, we refer the reader to the standard literature \cite{birkhoff_lattice_1940}. 

We reserve the term \emph{(semi)lattice} to refer specifically to these algebraic structures, while the term \emph{order-theoretic (semi)lattice} is used for structures of the form $(L, \leq)$. 
For (semi)lattices, the order $\leq$ and the operations $\cup$ and $\cap$ are mutually definable. Specifically, $x \leq y$ if and only if $x \cup y = y$, and $x \leq y$ if and only if $x \cap y = x$.
\corLattices*
\begin{claimproof}[Proof of Corollary \ref{cor:lattices}\eqref{item semilattices}]
    Let $\mathcal T = (\mathbb N, \subseteq)$ be the poset tree from Theorem \ref{thmPosetTrees}. Since $\subseteq$ is a partial ordering of $\mathbb N$ and the join of $x,y \in \mathbb N$ is well defined, $\mathcal T$ is an order-theoretic join semilattice and, by Theorem \ref{thmPosetTrees}, it has no punctual presentation. By defining $x \subseteq^* y \Leftrightarrow y \subseteq x$, and arguing as above, $\mathcal T^* = (\mathbb N,\subseteq^*)$ is a computable order-theoretic meet semilattice with no punctual copy.

    Let $x \cup y $ denote the join of $x$ and $y$ in $\mathcal T$. Consider the join semilattice $(\mathbb N, \cup)$. The join $x \cup y$ is a computable function of $x,y$: we run the construction from the proof of Theorem~\ref{thmPosetTrees} and wait for a stage $s$ at which both $x$ and $y$ enter $T_s$. Then we have two cases:
    \begin{enumerate}
        \item For $\subseteq$-comparable $x,y$, we return the maximum of $x,y$ with respect to $\subseteq$.
        \item For $\subseteq$-incomparable $x,y$, the construction guarantees that $T_{\supseteq x} \cup T_{\supseteq y} \subset T_s$, and thus $x \cup y$ can be computed from $ T_s$.
    \end{enumerate}
 It follows that $(\mathbb N , \cup)$ is a computable join semilattice. 
 
 To show that $(\mathbb N, \cup)$ is not punctually presentable, assume otherwise. Let $(\mathbb N, \cup')$ be a punctual copy of $(\mathbb N,\cup)$, and let $x \subseteq' y \Leftrightarrow x \cup' y = y$. Hence, the relation $x \subseteq' y$ is primitive recursive. But then $(\mathbb N, \subseteq')$ is a punctual copy of $\mathcal T$ which is impossible by Theorem \ref{thmPosetTrees}.

An analogous proof works for $(\mathbb N, \cap^*)$, the meet semilattice induced by $\mathcal T^*= (\mathbb N, \subseteq^*)$. 
\end{claimproof}
    \begin{claimproof}[Proof of Corollary \ref{cor:lattices}\eqref{item lattices}]
    Let $\mathcal L_- = (\mathbb N \setminus \{0\}, \sqsubseteq)$ be a punctual poset tree satisfying the following condition: for all $x,y \in \mathbb N \setminus \{0\}$, $x \sqsubseteq y \Leftrightarrow x -1 \subseteq y-1$. We observe that $\mathcal T \cong \mathcal L_-$ via primitive recursive isomorphism $\varphi: \mathbb N \to \mathbb N\setminus \{0\}$. The root of $\mathcal L_-$ is equal to $1$.
    
     Let $\mathcal L = (\mathbb N,\sqsubseteq)$ be such that $\mathcal L_-$ is a submodel of $\mathcal L$ and $0 \sqsubseteq x$, for all $x \in \mathbb N$ and $x \not\sqsubseteq 0$, for all $x \in \mathbb N \setminus \{0\}$. It is easy to see that $\mathcal L$ is a computable partial order. Moreover, $\mathcal L$ is a lattice. The join of $x,y \in \mathbb{N} \setminus \{0\}$ in $\mathcal L$ is the same as in $\mathcal L_-$. The join of $0,x$, where $x \in \mathbb N$, is $x$. The meet of $x,y$ such that $x \sqsubseteq y$ is $x$. The meet of $x, y$ such that $x \not\sqsubseteq y, y \not\sqsubseteq x$ is $0$. It is clear that the corresponding algebraic structure $(\mathbb N, \sqcup, \sqcap)$ where $\sqcup,\sqcap $ are the join and the meet functions of $\mathcal L$, respectively, is computable. 
    
    Observe that $\mathcal  L$ is a complemented lattice. The integer $1$ is the greatest element of $\mathcal L$ and $0$ is the least. We show that each $a \in \mathcal L$ has a complement, that is, $b$ such that $a \cup b = 1$ and $a \cap b = 0$. The construction of Theorem \ref{thmPosetTrees} guarantees that $0$ is a branching node in $\mathcal T$, and thus $1$ is a branching node in $\mathcal L$. Let $l,r$ be the two children of $1$ in $ \mathcal L$. $0$ and $1$ are complements of each other. Assume that $a \notin \{0,1\}$. Either $a \sqsubseteq l$ or $a \sqsubseteq r$. If $a \sqsubseteq l$, $a \cup r = 1$ and $a \cap r = 0$. Similarly, if $a \sqsubseteq r$, $a \cup l = 1$ and $a \cap l = 0$.

    Next, we prove that $(\mathbb N, \sqcup, \sqcap)$ is not punctually presentable. Towards a contradiction, let $\mathcal L' = (\mathbb N, \sqcup',\sqcap')$ be a punctual copy of $\mathcal L = (\mathbb N, \sqcup,\sqcap)$. Let $(\mathbb N, \sqsubseteq')$ be the order-theoretic counterpart of $\mathcal L'$, i.e., $\sqcup'$ and $\sqcap'$ are the join and meet of $\mathcal L'$, respectively. Let $0_{\mathcal L'}$ be the least element of $\mathcal L'$. It is clear that $(\mathbb N \setminus \{0_{\mathcal L'}\}, \sqsubseteq') \cong \mathcal T$. Now, given a primitive recursive bijection $f : \mathbb N \to \mathbb N \setminus \{0_{\mathcal L'}\}$, we define a punctual structure $\mathcal T' = (\mathbb N, \subseteq')$ as follows: $x \subseteq' y \Leftrightarrow f(x) \sqsubseteq' f(y)$. The primitive recursiveness of $\subseteq'$ follows from the following equivalences: $x \subseteq' y \Leftrightarrow f(x) \sqsubseteq' f(y) \Leftrightarrow f(x) \cup' f(y) = f(y)$ and the fact that $\cup'$ is primitive recursive. Hence $\mathcal T'$ is a punctual copy of $\mathcal T$ which contradicts Theorem \ref{thmPosetTrees}.
\end{claimproof}

\section{Modal Algebras}\label{sect:modal-alg}

In this section, we mainly follow notations from the monographs \cite{Goncharov-Book, Koppelberg}. Boolean algebras are viewed as algebraic structures in the signature $\{ \cup, \cap, \mathrm{C}, 0, 1 \}$. Informally speaking, one can think of the signature functions as the usual set-theoretic operations: union, intersection, and complement. In addition, $0$ is a least element, and $1$ is a greatest element.

Let $\mathcal{B}$ be a Boolean algebra. A function $f\colon \mathcal{B} \to \mathcal{B}$ is called a \emph{modality} if $f$ satisfies the following two properties:
\begin{itemize}
	\item $f(a\cup b) = f(a) \cup f(b)$ for all $a,b\in\mathcal{B}$,
	
	\item $f(0_{\mathcal{B}}) = 0_{\mathcal{B}}$.
\end{itemize}
Informally, one can view $f(x)$ as the result of applying the possibility operator $\diamondsuit = f$ to the `statement' $x$.
If $f$ is a modality, then the structure $(\mathcal{B},f)$ is called a \emph{modal algebra}. 
Here we prove the following result:

\thmModalAlgebra*


The proof idea is similar to the one for non-punctual robustness of torsion abelian groups (Theorem~3.2 in~\cite{kalimullin_algebraic_2017}). 
We build a computable modal algebra $\mathcal{A}^{\ast}$. Given a punctual `adversary' structure $\mathcal{P}_e$ (in the signature of modal algebras), we want $\mathcal{P}_e$ to show an element $c_e$ with some specific properties. Namely, either $c_e$ generates (via the function $f_{\mathcal{P}_e}$) an infinite substructure inside $\mathcal{P}_e$, or $c_e$ is a part of an $f_{\mathcal{P}_e}$-cycle of some finite size $N_e$. In the first case, we will win `automatically', since our $\mathcal{A}^{\ast}$ will not contain elements generating infinite substructures. In the second case, we try to prevent some fixed prime factor $p$ of $N_e$ from appearing in the possible sizes of the $f_{\mathcal{A}^{\ast}}$-cycles. Preventing such $p$ from appearing is quite a delicate technical task which is achieved via careful `algebraic-flavored' arrangements. In each of the two cases, we ensure that $\mathcal{A}^{\ast} \not\cong \mathcal{P}_e$. As usual, an appropriately organized priority construction allows to successfully deal with a whole computable sequence of adversaries.

We note that the construction for torsion abelian groups (from~\cite{kalimullin_algebraic_2017}) is a finite injury construction, while our construction of a modal algebra is injury-free. There are also some important differences related to the algebraic properties: for example, in the implemented construction, our $R_e$-strategy has to work with a specifically selected tuple $\bar{a} = a_0,a_1,\dots,a_M$ of witnesses (in place of just one element $c_e$ that was used in the proof idea), and this $\bar{a}$ is selected based on the Boolean algebra specifics.


Before proceeding to the formal proof of Theorem~\ref{theo:main-modal-algebra}, we give the necessary algebraic preliminaries.
For a Boolean algebra $\mathcal{B}$, its ordering $\leq_{\mathcal{B}}$ is defined in a standard way: $x\leq_{\mathcal{B}} y$ if and only if $x\cup y = y$.
An element $a\in\mathcal{B}$ is an \emph{atom} if $a$ is a minimal non-zero element in $\mathcal{B}$, i.e., $0 <_{\mathcal{B}} a$ and there is no $b$ with $0 <_{\mathcal{B}} b <_{\mathcal{B}} a$. 

By $Atom(\mathcal{B})$ we denote the set of atoms of $\mathcal{B}$.
The \emph{Fr{\'e}chet ideal} $Fr(\mathcal{B})$ is the ideal of $\mathcal{B}$ generated by the set $Atom(\mathcal{B})$. Notice that the ideal $Fr(\mathcal{B})$ contains precisely the finite sums of atoms.


Let $a$ be an element from a Boolean algebra $\mathcal{B}$. We put
	$a^0 = \mathrm{C}(a)$ and 
        $a^1  = a$.

Let $\bar a = a_0,a_1,\dots,a_n$ be a tuple of elements from $\mathcal{B}$, and let $\bar{\varepsilon} = (\varepsilon_0, \varepsilon_1, \dots, \varepsilon_n) \in \{ 0,1\}^{n+1}$. We define
	$
		\bar a^{\bar \varepsilon} = a_0^{\varepsilon_0} \cap a_1^{\varepsilon_1} \cap \dots \cap a_n^{\varepsilon_n}.
	$
The following fact is well-known (see, e.g., Exercise~6 in \S\,1.2 of~\cite{Goncharov-Book}):

\begin{lemma}\label{lemma:generating}
	Let $\mathcal{B}$ be a Boolean algebra, and let $\bar a = a_0,\dots,a_n$ be a tuple from $\mathcal{B}$. Then the subalgebra $gr_{\mathcal{B}}(\bar a)$ generated by the set $\{ a_0,\dots,a_n\}$ contains precisely the finite sums of the elements $\bar a^{\bar\varepsilon}$, $\varepsilon \in \{ 0,1\}^{n+1}$. 
	
	In addition, if $a_i \not\in \{ 0_{\mathcal{B}},1_{\mathcal{B}} \}$ and $a_i\neq a_j$ for $i\neq j$, then the finite subalgebra $gr_{\mathcal{B}}(\bar a)$ has at least $n+2$ atoms. (Notice that an atom of the subalgebra $gr_{\mathcal{B}}(\bar a)$ is not necessarily an atom of the original algebra $\mathcal{B}$.)
\end{lemma}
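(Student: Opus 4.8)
The plan is to derive both assertions from the standard calculus of minterms (elementary conjunctions) $\bar a^{\bar\varepsilon}$. First I would record the two structural facts that drive everything. The $2^{n+1}$ minterms are pairwise disjoint: if $\bar\varepsilon \neq \bar\delta$ then they differ in some coordinate $i$, so one conjunction carries the factor $a_i^1 = a_i$ and the other carries $a_i^0 = \mathrm{C}(a_i)$, whence $\bar a^{\bar\varepsilon} \cap \bar a^{\bar\delta} \leq a_i \cap \mathrm{C}(a_i) = 0_{\mathcal B}$. Moreover they cover the unit: by distributing the product, $\bigcup_{\bar\varepsilon} \bar a^{\bar\varepsilon} = \bigcap_{i=0}^{n}(a_i \cup \mathrm{C}(a_i)) = \bigcap_{i} 1_{\mathcal B} = 1_{\mathcal B}$.

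For the first assertion I would show that the set $S$ of all sums of minterms (with the empty sum read as $0_{\mathcal B}$) coincides with $gr_{\mathcal B}(\bar a)$. The inclusion $S \subseteq gr_{\mathcal B}(\bar a)$ is clear since each minterm is a $\cap/\mathrm{C}$-combination of the generators. For the converse it is enough to verify that $S$ is a subalgebra containing every $a_i$: it contains $0_{\mathcal B}$ and, by the cover property, $1_{\mathcal B}$; it is closed under $\cup$ trivially; by pairwise disjointness the meet of two sums of minterms reduces to the sum over the intersection of their index sets, giving closure under $\cap$; and since the minterms partition $1_{\mathcal B}$, the complement of $\bigcup_{\bar\varepsilon \in E} \bar a^{\bar\varepsilon}$ is $\bigcup_{\bar\varepsilon \notin E} \bar a^{\bar\varepsilon}$, giving closure under $\mathrm{C}$. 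Each generator lies in $S$ because $a_i = a_i \cap 1_{\mathcal B} = \bigcup\{\bar a^{\bar\varepsilon} : \varepsilon_i = 1\}$, using the cover property and $a_i \cap \bar a^{\bar\varepsilon} \in \{0_{\mathcal B}, \bar a^{\bar\varepsilon}\}$. As $gr_{\mathcal B}(\bar a)$ is the least subalgebra containing the $a_i$, this yields $gr_{\mathcal B}(\bar a) \subseteq S$, hence equality, and in particular $gr_{\mathcal B}(\bar a)$ is finite.

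For the atom count I would first note that every nonzero minterm is an atom of $gr_{\mathcal B}(\bar a)$: by disjointness, any element of $S$ below $\bar a^{\bar\varepsilon}$ is $0_{\mathcal B}$ or $\bar a^{\bar\varepsilon}$ itself. So the atoms of $gr_{\mathcal B}(\bar a)$ are exactly the nonzero minterms, and it remains to bound their number below. I would proceed incrementally along the generators: writing $S_k = gr_{\mathcal B}(a_0,\dots,a_k)$, the algebra $S_0 = \{0_{\mathcal B}, a_0, \mathrm{C}(a_0), 1_{\mathcal B}\}$ has exactly two atoms since $a_0 \notin \{0_{\mathcal B},1_{\mathcal B}\}$; and the passage from $S_{k-1}$ to $S_k$ should adjoin $a_k$ so as to split some atom $t$ of $S_{k-1}$ into the two nonzero pieces $t \cap a_k$ and $t \cap \mathrm{C}(a_k)$, strictly increasing the number of atoms. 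Iterating $n$ times from the base case would then yield at least $n+2$ atoms.

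I expect the main obstacle to be exactly this splitting step. What it genuinely requires is that $a_k$ not already be a sum of atoms of $S_{k-1}$, that is, $a_k \notin S_{k-1}$; otherwise adjoining $a_k$ creates no new atom. Pairwise distinctness together with $a_i \notin \{0_{\mathcal B},1_{\mathcal B}\}$ does not by itself force this — for instance, if $a_j = \mathrm{C}(a_i)$ then $a_j \in gr_{\mathcal B}(a_i)$ and no splitting occurs. The crux of the lower bound is therefore to guarantee that the witnesses form an irredundant family, i.e.\ no $a_i$ lies in the subalgebra generated by the remaining ones; in the intended application this is secured by the careful selection of the tuple $\bar a$. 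Granting irredundancy, the incremental splitting argument goes through and delivers the bound $n+2$, with the disjointness and cover facts of the first part providing the translation from ``$a_k$ is new'' to ``a new atom appears''.
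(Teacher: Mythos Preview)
The paper does not prove this lemma; it is quoted as a well-known fact with a reference to an exercise in Goncharov's monograph. So there is no proof to compare against, and the question is simply whether your argument stands on its own.

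Your treatment of the first assertion is correct and entirely standard: the minterms are pairwise disjoint and partition $1_{\mathcal B}$, the set $S$ of their finite sums is a subalgebra containing each generator, and minimality of $gr_{\mathcal B}(\bar a)$ gives equality.

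For the second assertion you have correctly diagnosed that the statement, exactly as written, is false. Your own example already witnesses this: take $n=1$, any $a_0 \notin \{0_{\mathcal B},1_{\mathcal B}\}$, and $a_1 = \mathrm{C}(a_0)$. Then $a_1 \notin \{0_{\mathcal B},1_{\mathcal B}\}$ and $a_0 \neq a_1$, so the hypotheses are met, yet $gr_{\mathcal B}(a_0,a_1) = \{0_{\mathcal B},a_0,\mathrm{C}(a_0),1_{\mathcal B}\}$ has exactly $2$ atoms rather than the claimed $n+2=3$. The hypothesis under which your incremental splitting argument does go through is the one you name---irredundancy, i.e.\ $a_k \notin gr_{\mathcal B}(a_0,\dots,a_{k-1})$ for each $k$---and with that hypothesis your proof of the bound is fine. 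Where you should be less generous is the closing remark that ``in the intended application this is secured by the careful selection of the tuple $\bar a$'': in the paper's actual invocation the generating set contains $\top_0$ and $\top_1 = \mathrm{C}(\top_0)$, so irredundancy already fails there, and nothing in the selection of the remaining $b_i$ (they are merely the $\leq_{\mathbb N}$-least elements outside $\{0,1,\top_0,\top_1\}$) prevents further redundancies such as $b_1 = \mathrm{C}(b_0)$. The honest conclusion is that the second clause of the lemma is misstated and needs a stronger hypothesis than pairwise distinctness; you have identified the right strengthening.
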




It is not hard to prove the following ancillary result: 

\begin{lemma}\label{lemma:modality-00}
	Let $\mathcal{B}$ be an infinite Boolean algebra, and let $g$ be an arbitrary map from the set $Atom(\mathcal{B})$ to $\mathcal{B}$. Then the function
	\[
		F_{[g]}(x) = \begin{cases}
			0_{\mathcal{B}}, & \text{if } x=0_{\mathcal{B}},\\
			g(a_0) \cup g(a_1) \cup \dots \cup g(a_n), & \text{if } x = a_0 \cup a_1 \cup \dots \cup a_n,\ a_i\in Atom(\mathcal{B}),\\
			1_{\mathcal{B}}, & \text{if } x\not\in Fr(\mathcal{B}),
		\end{cases}
	\]
	is a modality. In addition, if the algebra $\mathcal{B}$ is computable, and the sets $Atom(\mathcal{B})$ and $Fr(\mathcal{B})$ are computable, and the function $g$ is computable, then the modality $F_{[g]}$ is also computable.
\end{lemma}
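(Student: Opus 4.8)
The plan is to verify directly that $F_{[g]}$ obeys the two axioms of a modality, relying only on elementary properties of the Fr\'echet ideal, and then to read off computability from the shape of the definition. The first point I would record is that $F_{[g]}$ is well defined, i.e.\ its middle clause does not depend on the chosen representation of $x$ as a join of atoms. Since $Fr(\mathcal{B})$ is an ideal, it is downward closed, closed under finite joins, and consists exactly of the finite joins of atoms; moreover, if $x = a_0\cup\dots\cup a_n$ with each $a_i$ an atom and $a$ is any atom with $a\leq_{\mathcal{B}} x$, then $a = a\cap x = \bigcup_i (a\cap a_i)$ forces $a\cap a_i\neq 0_{\mathcal{B}}$ for some $i$, hence $a = a_i$ (both being atoms). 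Thus $\{a_0,\dots,a_n\} = \{a\in Atom(\mathcal{B}) : a\leq_{\mathcal{B}} x\}$ depends only on $x$, and by idempotence, commutativity and associativity of $\cup$ I may rewrite, for $x\in Fr(\mathcal{B})$, $F_{[g]}(x) = \bigcup\{ g(a) : a\in Atom(\mathcal{B}),\ a\leq_{\mathcal{B}} x\}$ (the empty join, i.e.\ the case $x=0_{\mathcal{B}}$, being $0_{\mathcal{B}}$), and $F_{[g]}(x) = 1_{\mathcal{B}}$ for $x\notin Fr(\mathcal{B})$; the three clauses are mutually exclusive and exhaustive.

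Next I would check the two axioms. The equality $F_{[g]}(0_{\mathcal{B}}) = 0_{\mathcal{B}}$ is the first clause. For additivity, fix $a,b\in\mathcal{B}$ and split into cases. If $a,b\in Fr(\mathcal{B})$, then $a\cup b\in Fr(\mathcal{B})$, and the set of atoms below $a\cup b$ is the union of the atoms below $a$ and the atoms below $b$; by the reformulation above, $F_{[g]}(a\cup b) = \bigcup\{g(c):c\leq_{\mathcal{B}} a\cup b\} = \bigcup\{g(c):c\leq_{\mathcal{B}} a\}\cup\bigcup\{g(c):c\leq_{\mathcal{B}} b\} = F_{[g]}(a)\cup F_{[g]}(b)$. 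If instead $a\notin Fr(\mathcal{B})$, then $a\leq_{\mathcal{B}} a\cup b$ together with downward closure of $Fr(\mathcal{B})$ yields $a\cup b\notin Fr(\mathcal{B})$, so $F_{[g]}(a\cup b) = 1_{\mathcal{B}} = F_{[g]}(a) = F_{[g]}(a)\cup F_{[g]}(b)$; the case $b\notin Fr(\mathcal{B})$ is symmetric. Hence $F_{[g]}$ is a modality.

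For the effective version, assume $\mathcal{B}$, $Atom(\mathcal{B})$, $Fr(\mathcal{B})$ and $g$ are all computable. To compute $F_{[g]}(x)$ I would first decide whether $x = 0_{\mathcal{B}}$ (equality in a computable structure is decidable and $0_{\mathcal{B}}$ is the interpretation of a signature constant) and whether $x\in Fr(\mathcal{B})$ (decidable by hypothesis), returning $0_{\mathcal{B}}$ or $1_{\mathcal{B}}$ respectively in these two easy cases. Otherwise $x$ is a nonzero finite join of atoms: I run through the domain $y_0,y_1,\dots$, retain those $y_i$ that are atoms with $y_i\leq_{\mathcal{B}} x$, and halt at the first stage at which the join of the retained elements equals $x$. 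This search terminates, since $x$ is a finite join of atoms and each such atom is eventually enumerated; and at the halting stage the retained set is, by the well-definedness observation applied to the representation $x = \bigcup(\text{retained atoms})$, exactly $\{a\in Atom(\mathcal{B}) : a\leq_{\mathcal{B}} x\}$, so returning the join of the $g$-images of the retained atoms yields $F_{[g]}(x)$. Every operation used ($\leq_{\mathcal{B}}$, $\cup$, membership in $Atom(\mathcal{B})$, the constants, and $g$) is computable, so $F_{[g]}$ is computable.

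I do not expect a genuine obstacle here --- the authors themselves flag this as an ancillary result --- and the single point that requires a moment's care is the uniqueness of the atom-representation of elements of $Fr(\mathcal{B})$, which simultaneously secures the well-definedness of the middle clause, the case analysis for additivity, and the termination and correctness of the search in the effective part.
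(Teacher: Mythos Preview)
Your proof is correct. The paper does not actually supply a proof of this lemma---it only states that the result ``is not hard to prove''---so there is nothing to compare against; your direct verification (well-definedness via uniqueness of the atom set below an element of $Fr(\mathcal{B})$, a case split on membership in $Fr(\mathcal{B})$ for additivity, and the obvious search for the computability clause) is precisely the routine argument the authors are gesturing at.
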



\begin{definition}[forward orbit]
	Let $(\mathcal{B}, f)$ be a modal algebra. For an element $a\in\mathcal{B}$, the \emph{forward orbit} of $a$ (with respect to $f$) is the set
	$
		FOrb(a) = \{ f^{(n)}(a) : n\in \mathbb{N} \}.
	$
\end{definition}


Now we are ready to start the proof of Theorem~\ref{theo:main-modal-algebra}. By $B({\mathbb{N}})$ we denote the Boolean algebra of all finite and cofinite subsets of~$\mathbb{N}$. 
Beforehand, we choose our underlying computable Boolean algebra $\mathcal{B}$ as follows. The algebra $\mathcal{B}$ is a computable isomorphic copy of the direct product $B(\mathbb{N}) \times B(\mathbb{N})$ such that the sets $Atom(\mathcal{B})$ and $Fr(\mathcal{B})$ are computable. 

In what follows, we identify $\mathcal{B}$ with $B(\mathbb{N}) \times B(\mathbb{N})$, and we use the following notations:
\begin{itemize}
	\item $\top_0 = (1_{B({\mathbb{N}})}, 0_{B({\mathbb{N}})})$ and $\top_1 = (0_{B(\mathbb{N})}, 1_{B(\mathbb{N})})$.
	
	\item Let $\{ w_i : {i\in\mathbb{N}} \}$ be a computable list of all atoms of $B(\mathbb{N})$. We put $u_i = (w_i, 0_{B(\mathbb{N})})$ and $v_i = (0_{B(\mathbb{N})}, w_i)$. 
\end{itemize}
It is clear that $\{u_i,v_i : i\in\mathbb{N} \}$ is a computable list of all atoms of $\mathcal{B}$. In addition, we have $u_i <_{\mathcal{B}} \top_0$ and $v_i <_{\mathcal{B}} \top_1$ for all $i\in\mathbb{N}$.


By Lemma~\ref{lemma:modality-00}, it is sufficient for us to construct a computable map 
$
	g\colon Atom(\mathcal{B}) \to \mathcal{B}.
$ 
Then the desired computable modal algebra $\mathcal{A}^{\ast}$ is defined as
\begin{equation}\label{equ:A-star}
	 \mathcal{A}^{\ast} = (\mathcal{B}, F_{[g]}).
\end{equation}

\subsection{Preparations for the Construction of $g$}

Firstly, we describe the \emph{basic module} of the construction of the map $g$, we call this module:

\begin{description}
\item[Adding a $p$-cycle to the map $g$.]
Given an odd prime number $p$, we find the least $i\in\mathbb{N}$ such that the value $g(u_i)$ is not yet defined. We also choose the least $j\in\mathbb{N}$ such that $g(v_j)$ is undefined. 

Let $k = \left\lfloor \frac{p}{2}\right\rfloor$. For $0\leq m< k$, we put: 
$g(u_{i+m}) = v_{j+m}$, 
$g(v_{j+m}) = u_{i+m+1}$, and
$g(u_{i+k}) = u_i$.

We also say that the set $\{ u_{i+m} : m\leq k\} \cup \{ v_{j + \ell} : \ell <k\}$ is a \emph{$p$-cycle.} This concludes the description of the basic module.
\end{description}



We will ensure the following property of our (future) construction: 
\begin{description}
	\item[(\#)] For each odd prime $p$, we add at most one $p$-cycle to $g$.
\end{description}

Property~(\#) is enough to prove the following useful lemma about the cardinalities of forward orbits. 


\begin{restatable}{lemma}{lemmalength}\label{lemma:length}
	Suppose that the modal algebra $\mathcal{A}^{\ast}$ from Eq.~(\ref{equ:A-star}) satisfies Property~(\#). In addition, assume that there are infinitely many primes $p$ such that we have added a $p$-cycle to $g$. Let $a \in \mathcal{A}^{\ast}$ and $a \neq 0_{\mathcal{B}}$. Then the element $a$ satisfies precisely one of the following three cases:
	\begin{enumerate}
		\item $a\not\in Fr(\mathcal{B})$ and $F_{[g]}(a) = 1_{\mathcal{B}}$.
		
		\item $a \in Fr(\mathcal{B})$ and $F_{[g]}(a) = a$.
		
		\item $a\in Fr(\mathcal{B})$ and $\mathrm{card}(FOrb(a)) = q_1\cdot q_2\cdot \ldots\cdot  q_n$ for some $n\geq 1$ and some odd primes $q_i$ such that $q_i \neq q_j$ for $i\neq j$. (Note that here a $q_i$-cycle has been added to $g$ at some stage of the construction.) In addition, if $0\leq i < j < \mathrm{card}(FOrb(a))$, then $F_{[g]}^{(i)}(a) \neq F_{[g]}^{(j)}(a)$. 
	\end{enumerate}
	Furthermore, if $a\in Fr(\mathcal{B})$, $a\neq 0_{\mathcal{B}}$ and $a \leq_{\mathcal{B}} \top_k$ for some $k\in\{ 0,1\}$, then the element $a$ satisfies Case~3.
\end{restatable}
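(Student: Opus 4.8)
\textbf{Proof plan for Lemma~\ref{lemma:length}.}
The plan is to analyze the structure of the forward orbit of a nonzero element $a$ by reducing everything to the behaviour of $F_{[g]}$ on atoms. The first step is the trivial dichotomy provided by the definition of $F_{[g]}$: if $a \notin Fr(\mathcal{B})$, then $F_{[g]}(a) = 1_{\mathcal{B}}$, so $F_{[g]}^{(n)}(a) = 1_{\mathcal{B}}$ for all $n\geq 1$, and we are in Case~1 (note $1_{\mathcal{B}} \notin Fr(\mathcal{B})$ since $\mathcal{B}$ is infinite, so this case is stable and mutually exclusive with the others). So from now on assume $a \in Fr(\mathcal{B})$, i.e. $a = a_0 \cup \dots \cup a_m$ is a finite sum of distinct atoms of $\mathcal{B}$. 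Since $F_{[g]}$ restricted to $Fr(\mathcal{B})$ is the additive extension of $g$, and $g$ maps atoms to elements of $\mathcal{B}$ which are in fact (sums of) atoms in the cofinite range or possibly $0_\mathcal{B}$, I need to first check that $g$ actually maps atoms to atoms (or leaves them outside any $p$-cycle). By the basic module, every defined value of $g$ is of the form $u_\ell$ or $v_\ell$, i.e. an atom of $\mathcal{B}$; atoms on which $g$ is never touched by the construction get a default value (this will have to be pinned down — presumably the construction assigns them some harmless value, e.g. $g(u_i) = u_i$, making them $1$-cycles, which is Case~2). So $F_{[g]}$ maps $Fr(\mathcal{B})$ into $Fr(\mathcal{B})$ and acts as a permutation on the (finite or cofinite) set of atoms below any fixed finite sum — more precisely, on $Atom(\mathcal{B})$ it is a bijection consisting of the disjoint $p$-cycles together with fixed points.

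The second step is the combinatorial heart: describe $\mathrm{card}(FOrb(a))$ in terms of the cycle lengths. Write $a = a_0 \cup \dots \cup a_m$ with the $a_i$ distinct atoms. For each $i$, let $\ell_i$ be the length of the cycle of the permutation (on $Atom(\mathcal{B})$) containing $a_i$; by Property~(\#) and the basic module, each $\ell_i$ is either $1$ (a fixed atom) or equal to the size of some $p$-cycle, where the size of a $p$-cycle is $|\{u_{i+m}: m\leq k\}| + |\{v_{j+\ell}: \ell < k\}| = (k+1) + k = 2k+1 = p$ (using $k = \lfloor p/2 \rfloor$, $p$ odd). Since $F_{[g]}^{(n)}(a) = F_{[g]}^{(n)}(a_0) \cup \dots \cup F_{[g]}^{(n)}(a_m)$ and the atoms stay distinct under the permutation, $F_{[g]}^{(n)}(a) = a$ iff $n$ is a common multiple of all the $\ell_i$, so $\mathrm{card}(FOrb(a)) = \mathrm{lcm}(\ell_1,\dots,\ell_m)$. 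If all $\ell_i = 1$ this is $1$, i.e. $F_{[g]}(a) = a$, which is Case~2. Otherwise $\mathrm{lcm}(\ell_1,\dots,\ell_m)$ is an lcm of odd primes (each nontrivial $\ell_i$ is a single odd prime $p$ for which a $p$-cycle was added, by Property~(\#)), hence equals $q_1 q_2 \cdots q_n$ for distinct odd primes $q_i$, $n\geq 1$ — this is exactly Case~3. The injectivity clause in Case~3, namely $F_{[g]}^{(i)}(a)\neq F_{[g]}^{(j)}(a)$ for $0\leq i<j<\mathrm{card}(FOrb(a))$, follows because the $a_i$'s move under disjoint cycles: if $F_{[g]}^{(i)}(a) = F_{[g]}^{(j)}(a)$ then $j-i$ would be a multiple of every $\ell_i$, hence $\geq \mathrm{lcm}(\ell_1,\dots,\ell_m) = \mathrm{card}(FOrb(a))$, contradiction. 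The three cases are mutually exclusive (Case~1 is distinguished by $a\notin Fr(\mathcal{B})$; Cases~2 and~3 by whether $\mathrm{card}(FOrb(a)) = 1$ or $>1$) and exhaustive, so exactly one holds.

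The third step handles the ``furthermore'' clause: if $a \in Fr(\mathcal{B})$, $a\neq 0_\mathcal{B}$ and $a \leq_{\mathcal{B}} \top_k$, then $a$ is a finite nonzero sum of atoms all of the form $u_i$ (if $k=0$) or all of the form $v_j$ (if $k=1$). The point is to rule out Case~2, i.e. to show at least one of these atoms lies on a nontrivial cycle. Here I invoke the hypothesis that infinitely many $p$-cycles have been added: every $p$-cycle consumes a block of $u$-atoms $\{u_{i},\dots,u_{i+k}\}$ and a block of $v$-atoms $\{v_j,\dots,v_{j+k-1}\}$, and by the ``least undefined index'' rule in the basic module these blocks are allocated consecutively with no gaps. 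Hence with infinitely many $p$-cycles added, every $u$-atom and every $v$-atom eventually gets absorbed into some $p$-cycle — there are no permanently-fixed atoms of the form $u_i$ or $v_i$. Actually I should be a bit careful: the construction might also define $g$ on some atoms outside of the basic module; but the claim I need is just that an atom below $\top_0$ or $\top_1$ that is never put into a $p$-cycle does not exist, which follows from the consecutive-allocation discipline plus ``infinitely many $p$-cycles''. Therefore any such $a$ has all its atoms on nontrivial cycles, so $\mathrm{card}(FOrb(a)) = \mathrm{lcm}$ of odd primes $>1$, i.e. Case~3.

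\textbf{Main obstacle.} The routine part is the lcm/disjoint-cycle bookkeeping in step two. The delicate part is making step one and the ``furthermore'' clause airtight: I must confirm that the eventual construction of $g$ (a) only ever assigns atom-valued images to atoms, (b) gives harmless (fixed-point) default values to untouched atoms so that $F_{[g]}$ is genuinely the additive closure of a permutation of $Atom(\mathcal{B})$ with only $p$-cycles and fixed points as nontrivial/trivial orbits, and (c) allocates the $u$- and $v$-blocks for successive $p$-cycles without gaps, so that ``infinitely many $p$-cycles'' really does force every $u_i$ and $v_i$ into a cycle. These are all guaranteed by the basic module as stated, but the lemma is phrased for the *final* $g$, so the argument has to be insensitive to whatever the priority construction does between cycle-additions — this is the only point requiring genuine attention, and it is resolved by observing that the only operations the construction performs on $g$ are instances of the basic module (plus, at the very end, filling in defaults).
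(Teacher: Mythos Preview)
Your overall strategy is sound and close to the paper's, but step two contains a real error. You assert that for $a = a_0 \cup \dots \cup a_m \in Fr(\mathcal{B})$,
\[
F_{[g]}^{(n)}(a) = a \quad\Longleftrightarrow\quad n \text{ is a common multiple of all the } \ell_i,
\]
and hence $\mathrm{card}(FOrb(a)) = \mathrm{lcm}(\ell_1,\dots,\ell_m)$. Only the direction $(\Leftarrow)$ is correct. The condition $F_{[g]}^{(n)}(a) = a$ says that the \emph{set} $\{a_0,\dots,a_m\}$ is $g^n$-invariant, not that each $a_i$ is individually $g^n$-fixed. Concretely, let $a$ be the union of all atoms in a single $3$-cycle: every $\ell_i = 3$, yet $F_{[g]}(a) = a$, so $a$ lies in Case~2, not Case~3. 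Your dichotomy ``all $\ell_i = 1$ gives Case~2, otherwise Case~3'' therefore misclassifies such elements, and your computed orbit size is wrong.

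The repair is exactly what the paper does. For a $p$-cycle $C$ (with $p$ prime), a nonempty proper subset $S\subsetneq C$ is $g^n$-invariant iff $p\mid n$. Hence the correct period of $a$ is $\prod q_i$, where the $q_i$ are precisely those primes whose cycle is \emph{partially} below $a$ (some atom of the $q_i$-cycle is $\leq_{\mathcal{B}} a$ and some is not). With this correction your argument goes through: if at least one such $q_i$ exists you get Case~3, otherwise Case~2. The paper's ``furthermore'' clause also uses this partial-intersection idea directly: if $a\leq_{\mathcal{B}}\top_0$ then $a$ contains some $u_i$ but no $v_j$; since every $p$-cycle contains at least one $v_j$, the cycle through $u_i$ is only partially below $a$, forcing Case~3. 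Your own argument for this clause reaches the right conclusion but via the faulty lcm formula.

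A minor side remark: your concern about ``default values'' for untouched atoms is moot here. Under the hypothesis that infinitely many $p$-cycles are added, the least-index discipline of the basic module ensures every $u_i$ and every $v_j$ is eventually absorbed into some cycle, and $\{u_i,v_j:i,j\in\mathbb{N}\}$ exhausts $Atom(\mathcal{B})$. So there are no fixed atoms at all, no $\ell_i$ equals $1$, and this makes the error above more visible: with your formula Case~2 could never occur for a nonzero $a\in Fr(\mathcal{B})$, yet it plainly does.
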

\begin{proof}
	If $a\not\in Fr(\mathcal{B})$, then Lemma~\ref{lemma:modality-00} defines $F_{[g]}(a) = 1_{\mathcal{B}}$. Thus, assume that $a\in Fr(\mathcal{B})$ and $F_{[g]}(a) \neq a$.
	
	By Property~(\#), for each prime $p$ the algebra $\mathcal{A}^{\ast}$ contains at most one $p$-cycle. We choose all prime numbers $q_1, q_2,\dots,q_n$ such that:
	\begin{itemize}
		\item there exists an atom $w\in Atom(\mathcal{B})$ such that $w\leq_{\mathcal{B}} a$ and $w$ belongs to the (unique) $q_i$-cycle; and
		
		\item there exists $w' \in Atom(\mathcal{B})$ such that $w' \not\leq_{\mathcal{B}} a$ and $w'$ belongs to the $q_i$-cycle.
	\end{itemize}
	Notice that (at least one) such prime $q_i$ exists. Indeed, if there are no such primes $q_i$, then the element $a$ satisfies the following condition:
	\begin{description}
		\item[] if an atom $w$ belongs to a $q$-cycle and $w\leq_{\mathcal{B}} a$, then \emph{every} atom from this $q$-cycle lies $\leq_{\mathcal{B}}$-below $a$.
	\end{description}
	This condition implies that $F_{[g]}(a)$ must be equal to $a$ (which contradicts our assumption).
	
	Define $M = q_1 \cdot q_2 \cdot \ldots \cdot q_n$. Then a straightforward computation shows the following: $F_{[g]}^{(M)}(a) = a$ and for all $i,j$ we have: if $0\leq i < j < M$, then $F_{[g]}^{(i)}(a) \neq F_{[g]}^{(j)}(a)$. Hence, $\mathrm{card}(FOrb(a)) = M$. 	
	We deduce that every element $a\neq 0_{\mathcal{B}}$ satisfies one of the three cases of the lemma.
	
	Now suppose that $a\in Fr(\mathcal{B}) \setminus \{ 0_{\mathcal{B}}\}$ and $a\leq_{\mathcal{B}} \top_0$. Then for some $i \in\mathbb{N}$, we have $u_i \in \mathrm{Atom}(\mathcal{B})$ and $u_i \leq_{\mathcal{B}} a$. On the other hand, for all $j \in\mathbb{N}$, we have $v_j \in Atom(\mathcal{B})$ and $v_j\not\leq_{\mathcal{B}} a$. By choosing $v_j$ from the $q$-cycle of the atom $u_i$, we deduce that $a$ must satisfy Case~3. (If $a\in Fr(\mathcal{B}) \setminus \{ 0_{\mathcal{B}}\}$ and $a\leq_{\mathcal{B}} \top_1$, then one can make a similar argument.)
	Lemma~\ref{lemma:length} is proven.
\end{proof}

\subsection{Requirements and the Construction of $g$}

Observe the following: the modal algebra $\mathcal{A}^{\ast}$ from Eq.~(\ref{equ:A-star}) has a punctual copy if and only if the structure $(\mathcal{A}^{\ast}, \top_0, \top_1)$ has a punctual copy. 
Thus, we fix a uniformly computable list $(\mathcal{P}_e)_{e\in\mathbb{N}}$ containing all punctual structures in the signature $\{ \cup,\cap, \mathrm{C},0,1,f\} \cup \{ \top_0,\top_1\}$. We satisfy the following requirements:
\begin{description}
	\item[$R_e:$] The structure $(\mathcal{A}^{\ast}, \top_0,\top_1)$ is not isomorphic to $\mathcal{P}_e$.
\end{description}
In what follows, we will abuse the notations: we identify the structures $\mathcal{A}^{\ast}$ and $(\mathcal{A}^{\ast},\top_0,\top_1)$.

At a stage $s$, we will have a finite list of \emph{active requirements}. Each active requirement $R_e$ possesses a corresponding witness $c_e \in \mathcal{P}_e$. Active requirements may be (forever) \emph{deactivated}. In addition, at a stage $s$ we will have at most one requirement $R_{e_0}$ \emph{on the alert}. 
The intended (full) life-cycle of a given requirement $R_e$ is as follows:
\begin{description}
\item[] inactive $\mapsto$ on the alert $\mapsto$ active $\mapsto$ deactivated.
\end{description}


Along the construction, we always do the following background \emph{monitoring procedure}. At a stage $s$, for each $\mathcal{P}_e$ with $e\leq s$, we consider the finite set 
$
	S_{e,s} = \{ 0_{\mathcal{P}_e}, 1_{\mathcal{P}_e}, \top_{0,\mathcal{P}_e}, \top_{1,\mathcal{P}_e}\} \cup \{ x : x\leq_{\mathbb{N}} s\}.
$ 
Assume that at the stage $s$ we have witnessed one of the following conditions:
\begin{alphaenumerate}
	\item Some of the elements from the set 
	\[
		X_{e,s} = \big\{ \bar a^{\bar \varepsilon} : \bar a = (a_1,\dots,a_n),\ 1\leq n \leq \mathrm{card}(S_{e,s}),\ a_i \in S_{e,s},\ \bar\varepsilon \in \{0,1\}^n
		\big\}
	\] 
	do not satisfy the axioms of Boolean algebras or the axioms of modal algebras. (Notice that here the set $X_{e,s}$ is the `potential' Boolean subalgebra $gr_{\mathcal{P}_e}(S_{e,s})$ generated by the set $S_{e,s}$ inside $\mathcal{P}_e$.)
	
	\item $\top_{0,\mathcal{P}_e} \cup \top_{1,\mathcal{P}_e} \neq 1_{\mathcal{P}_e}$ or $\top_{0,\mathcal{P}_e} \cap \top_{1,\mathcal{P}_e} \neq 0_{\mathcal{P}_e}$ or $f_{\mathcal{P}_e}(\top_{0,\mathcal{P}_e}) \neq 1_{\mathcal{P}_e}$ or $f_{\mathcal{P}_e}(\top_{1,\mathcal{P}_e}) \neq 1_{\mathcal{P}_e}$.
	
	\item There exist $k\in\{ 0,1\}$ and $x,y\in S_{e,s} \setminus\{ 0_{\mathcal{P}_e}, \top_{k,\mathcal{P}_e}\}$ such that $x\cup y = \top_{k,\mathcal{P}_e}$, $x\cap y = 0_{\mathcal{P}_e}$, and 
	\begin{itemize}
		\item either $f_{\mathcal{P}_e}(x) \neq 1_{\mathcal{P}_e}$ and $f_{\mathcal{P}_e}(y)\neq 1_{\mathcal{P}_e}$, or
		
		\item $f_{\mathcal{P}_e}(x) = 1_{\mathcal{P}_e}$ and $f_{\mathcal{P}_e}(y) = y$, or
		
		\item $f_{\mathcal{P}_e}(x) = 1_{\mathcal{P}_e}$ and $1_{\mathcal{P}_e} \in  \{ f^{(m)}_{\mathcal{P}_e}(y) : m\leq s \}$.
	\end{itemize}
	
	\item There exist $k\in\{ 0,1\}$ and $x,y \in S_{e,s}$ such that $x\cap y = 0_{\mathcal{P}_e}$, $x\leq_{\mathcal{P}_e}\! \top_{k,\mathcal{P}_e}$, $y\leq_{\mathcal{P}_e}\! \top_{k,\mathcal{P}_e}$, and $f_{\mathcal{P}_e}(x) = f_{\mathcal{P}_e}(y) = 1_{\mathcal{P}_e}$.
\end{alphaenumerate}

Then we declare the requirement $R_e$ deactivated. 
Indeed, in this case the structure $\mathcal{P}_e$ cannot be isomorphic to $(\mathcal{A}^{\ast}, \top_0,\top_1)$. To observe this non-isomorphism for Condition~(c) above, we recall the following fact: if $x\cup y = \top_0$, $x\cap y = 0_{\mathcal{B}}$ and $x,y\not\in \{ 0_{\mathcal{B}}, \top_{0}\}$, then precisely one of the elements $b\in \{x,y\}$ satisfies $b\not\in Fr(\mathcal{B})$ and $F_{[g]}(b) = 1_{\mathcal{B}}$. By Lemma~\ref{lemma:length}, the remaining element $a \in \{ x,y\}$ satisfies $a\in Fr(\mathcal{B})$ and $F_{[g]}(a) \neq a$. In addition, we have $1_{\mathcal{B}} \not\in FOrb(a)$.

To observe non-isomorphism for Condition~(d), we recall the following: if $x\leq_{\mathcal{B}} \top_0$ and $F_{[g]}(x) = 1_{\mathcal{B}}$, then $x\not\in Fr(\mathcal{B})$ and every $y \leq_{\mathcal{B}} \top_0 \cap \mathrm{C}(x)$ satisfies $y\in Fr(\mathcal{B})$ and $F_{[g]}(y)\neq 1_{\mathcal{B}}$.


Due to the described monitoring procedure, in the main construction given below, we may assume (without loss of generality) that every considered structure $\mathcal{P}_e$ has the following properties, for $k\in\{ 0,1\}$:
\begin{description}
	\item[(P.0)] The reduct of $\mathcal{P}_e$ to the signature $\{\cup,\cap,\mathrm{C},0,1,f\}$ is a punctual modal algebra. In addition, $\top_{0,\mathcal{P}_e} \cup \top_{1,\mathcal{P}_e} = 1_{\mathcal{P}_e}$, $\top_{0,\mathcal{P}_e} \cap \top_{1,\mathcal{P}_e} = 0_{\mathcal{P}_e}$, and $f_{\mathcal{P}_e}(\top_{0,\mathcal{P}_e}) = f_{\mathcal{P}_e}(\top_{1,\mathcal{P}_e}) = 1_{\mathcal{P}_e}$.
	
	\item[(P.1)] If $y\leq_{\mathcal{P}_e} \! \top_{k,\mathcal{P}_e}$, $y \neq 0_{\mathcal{P}_e}$ and $f_{\mathcal{P}_e}(y) \neq 1_{\mathcal{P}_e}$, then $f_{\mathcal{P}_e}(y) \neq y$ and $1_{\mathcal{P}_e} \not\in FOrb_{\mathcal{P}_e}(y)$.
	
	\item[(P.2)] If $x_1,x_2,\dots,x_n \leq_{\mathcal{P}_e} \! \top_{k,\mathcal{P}_e}$ and $x_i \cap x_j = 0_{\mathcal{P}_e}$ for $i\neq j$, then \emph{at most one} element $y\in \{ x_1,x_2,\dots,x_n\}$ satisfies $f_{\mathcal{P}_e}(y) = 1_{\mathcal{P}_e}$.
\end{description}
Now we are ready to describe the main construction. Let $(p_s)_{s\geq 1}$ be the increasing list of all odd prime numbers.


\proofsubparagraph{Construction.} At stage $0$, there are no active requirements. We declare that the requirement $R_0$ is on the alert. 


\proofsubparagraph{Stage $s > 0$.} Roughly speaking, the main goal of the stage $s$ is to decide whether to add a $p_{s}$-cycle to the map $g$. 
In addition, our actions will ensure the following property:
\begin{description} 
	\item[($\dagger$)] Suppose that by the end of the stage $s$, an active requirement $R_e$ has a witness $c_e\in \mathcal{P}_e$. Then $c_e$ satisfies one of the following two conditions:
	\begin{itemize}
		\item either the forward orbit $FOrb_{\mathcal{P}_e}(c_e)$ is infinite, or
		
		\item the set $FOrb_{\mathcal{P}_e}(c_e)$ is finite and the following implication holds:
		if $r = \mathrm{card}(FOrb_{\mathcal{P}_e}(c_e))$ has a form $r = q_1 \cdot q_2 \cdot \ldots \cdot q_n$, where $n\geq 1$ and $2 < q_1 <q_2 <\dots < q_n$ are prime numbers, then some prime $q > p_s$ must divide $r$.
	\end{itemize}
\end{description}
Intuitively speaking, the choice of such form $r = q_1 \cdot q_2 \cdot \ldots \cdot q_n$ is dictated by Lemma~\ref{lemma:length}. If the decomposition of $r$ has any other form, then Lemma~\ref{lemma:length} ensures that $\mathcal{A}^{\ast} \not\cong \mathcal{P}_e$.

If the structure $\mathcal{A}^{\ast}$ does not contain a $p_{s}$-cycle by the end of the stage $s$, then we say that the prime $p_s$ is \emph{forbidden} from entering the structure $\mathcal{A}^{\ast}$.


Our actions at the stage $s$ go as follows. If there is a requirement $R_{e_0}$ which is currently on the alert, then firstly we execute the following strategy.

\subsubsection{Strategy for $R_e$ Which Is on the Alert}

Let $\mathcal{D}_s$ be the Boolean subalgebra of $\mathcal{B}$ generated by the following elements: $\top_0$, $\top_1$, and the elements of all $q$-cycles added to $g$ at stages $t < s$. By Lemma~\ref{lemma:generating}, the algebra $\mathcal{D}_s$ is finite, and one can computably recover this structure $\mathcal{D}_s$. In addition, Lemma~\ref{lemma:modality-00} ensures that the values $F_{[g]}(x)$ are defined for all $x\in \mathcal{D}_s$.


Define $M = \mathrm{card}(\mathcal{D}_s)$. Consider the $\leq_{\mathbb{N}}$-least elements $b_0 <_{\mathbb{N}} b_1 <_{\mathbb{N}} \dots <_{\mathbb{N}} b_{M-1}$ from the structure $\mathcal{P}_e$ such that $b_i \not\in \{ 0_{\mathcal{P}_e}, 1_{\mathcal{P}_e}, \top_{0,\mathcal{P}_e}, \top_{1,\mathcal{P}_e}\}$. We define the following finite Boolean subalgebra of $\mathcal{P}_e$:\quad 
$
	\mathcal{Q}_s = gr_{\mathcal{P}_e} (\{ \top_{0,\mathcal{P}_e}, \top_{1,\mathcal{P}_e}, b_0,b_1,\dots,b_{M-1}\}).
$

By Lemma~\ref{lemma:generating}, the algebra $\mathcal{Q}_s$ has at least $M+3$ atoms. Applying Property~(P.2) of the construction, we deduce that at most two of these atoms $x$ satisfy $f_{\mathcal{P}_e}(x) = 1_{\mathcal{P}_e}$ (indeed, at most one atom below $\top_{0,\mathcal{P}_e}$, and at most one atom below $\top_{1,\mathcal{P}_e}$). Therefore, among the atoms of $\mathcal{Q}_s$ we can find $(M+1)$-many pairwise distinct elements $a_0,a_1,\dots,a_{M}$ such that $f_{\mathcal{P}_e}(a_i) \neq 1_{\mathcal{P}_e}$, $a_i\neq 0_{\mathcal{P}_e}$, and $a_i\cap a_j = 0_{\mathcal{P}_e}$ for $i\neq j$.

By Property~(P.1), we obtain that $f_{\mathcal{P}_e}(a_i) \neq a_i$ and $1_{\mathcal{P}_e} \not\in FOrb_{\mathcal{P}_e}(a_i)$. 
We define 
$
	L = \prod_{j=1}^{s} p_j. 
$
We compute the values
$
	f_{\mathcal{P}_e}^{(j)}(a_i),\ \text{for } i \leq M \text{ and } j\leq L.
$
Then one of the following five cases is satisfied:

\proofsubparagraph{Case (i.a).} There exists $a_i$ such that the function $f_{\mathcal{P}_e}$ is not injective on the set $\{ f_{\mathcal{P}_e}^{(j)}(a_i) : j\leq L\}$ (i.e., there exist $j_1 < j_2 \leq L$ such that $f_{\mathcal{P}_e}^{(j_1)}(a_i) \neq f_{\mathcal{P}_e}^{(j_2)}(a_i)$ and $f_{\mathcal{P}_e}^{(j_1+1)}(a_i) = f_{\mathcal{P}_e}^{(j_2+1)}(a_i)$).

Then we (safely) declare the requirement~$R_e$ deactivated. Indeed, by item~(3) of Lemma~\ref{lemma:length}, the structure $\mathcal{P}_e$ cannot be isomorphic to our structure $\mathcal{A}^{\ast}$. 
In all cases~(i.$X$) below, we will assume that the function $f_{\mathcal{P}_e}$ is injective on the set $\{ f_{\mathcal{P}_e}^{(j)}(a_i) : j\leq L\}$.

\proofsubparagraph{Case (i.b).} There exists $a_i$ with the following properties:
\begin{itemize}
	\item $FOrb(a_i) \subseteq \{ f_{\mathcal{P}_e}^{(j)}(a_i) : j\leq L\}$ and $N = \mathrm{card}(FOrb(a_i))$.

	\item Consider the prime decomposition $N = q_1^{\alpha_1} \cdot q_2^{\alpha_2} \cdot \ldots \cdot q_{\ell}^{\alpha_\ell}$, where $q_j \neq q_k$ for $j\neq k$ and $\alpha_j \geq 1$. For some $j\leq \ell$, either $q_j$ has already been forbidden from entering $\mathcal{A}^{\ast}$, or $q_j = 2$, or $\alpha_j \geq 2$.
\end{itemize}
Then by Lemma~\ref{lemma:length}, our structure $\mathcal{A}^{\ast}$ cannot be isomorphic to $\mathcal{P}_e$, since $\mathcal{A}^{\ast}$ does not contain elements $b\in Fr(\mathcal{B})$ with $\mathrm{card}(FOrb(b)) = N$. 
We declare the requirement~$R_e$ deactivated.


\proofsubparagraph{Case (i.c).} Neither of Cases~(i.a) and~(i.b) is satisfied, and there exists $a_i$ with the following properties:
\begin{itemize}
	\item $FOrb(a_i) \subseteq \{ f_{\mathcal{P}_e}^{(j)}(a_i) : j\leq L\}$ and $N = \mathrm{card}(FOrb(a_i))$.

	\item For some $t\geq s$, the prime $p_t$ divides $N$.
\end{itemize}
We declare the requirement $R_e$ active, and we set $c_e = a_i$. Note the following: if $t>s$, then $R_e$ will definitely satisfy Property~($\dagger$) at the end of stage $s$. 


\proofsubparagraph{Case (i.d).} Neither of Cases~(i.a)--(i.c) is satisfied, and there exists $a_i$ such that for all $j < k\leq L$, we have $f_{\mathcal{P}_e}^{(j)}(a_i) \neq f_{\mathcal{P}_e}^{(k)}(a_i)$. Notice that this condition is equivalent to the condition $\mathrm{card}(FOrb(a_i)) \geq L+1$. 
We declare the requirement $R_e$ active, and we define $c_e = a_i$.


\proofsubparagraph{Case (i.e).} Suppose that neither of Cases~(i.a)--(i.d) is satisfied. Then \emph{every} $a_i$, $i\leq M$, has the following properties:
\begin{description}
	\item[(e.1)] $FOrb(a_i) \subseteq \{ f_{\mathcal{P}_e}^{(j)}(a_i) : j\leq L\}$ and $N_i = \mathrm{card}(FOrb(a_i)) \leq L$. 
	
	\item[(e.2)] The number $N_i$ has prime decomposition $N_i = q_{i,1} \cdot \ldots\cdot q_{i,\ell_i}$, where $q_{i,j} \neq q_{i,k}$ for $j\neq k$, and $3\leq q_{i,j} < p_{s}$ and $q_{i,j}$ has not been forbidden from entering $\mathcal{A}^{\ast}$.
\end{description}
We show that in this case the structure $\mathcal{P}_e$ is not isomorphic to $\mathcal{A}^{\ast}$. 
In order to prove this, it is enough to establish the following fact: 

\begin{restatable}{claim}{claimCaseIE}\label{claim-Case-i.e}
$\mathcal{A}^{\ast}$ does not contain $(M+1)$-many pairwise disjoint elements $a$ satisfying:
\begin{equation}\label{equ:aux-001}
	\mathrm{card}(FOrb(a)) = N_i \text{ for some } N_i \text{ with Property~(e.2).} 
\end{equation}
\end{restatable}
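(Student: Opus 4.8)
The plan is to count the atoms of $\mathcal{A}^{\ast}$ that can possibly belong to a forward orbit of cardinality $N_i$ with Property~(e.2), and to show that this pool of atoms is too small to host $(M+1)$-many pairwise disjoint elements of the required kind. First I would recall, via Lemma~\ref{lemma:length}, that any $a\neq 0_{\mathcal{B}}$ with finite $FOrb(a)$ of cardinality $N_i$ must lie in $Fr(\mathcal{B})$ and that $N_i = q_{i,1}\cdot\ldots\cdot q_{i,\ell_i}$ is a product of distinct odd primes, each of which has actually had a $q_{i,j}$-cycle added to $g$ at some stage $t < s$ (this is where Property~(e.2)'s clause ``$q_{i,j}$ has not been forbidden'' is used: a prime $q < p_s$ that is not forbidden is precisely one for which a $q$-cycle has already been inserted). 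Such an $a$ is a finite sum of atoms, and for $FOrb(a)$ to have cardinality exactly $N_i$, the set of atoms $\leq_{\mathcal{B}} a$ must, inside each relevant $q_{i,j}$-cycle, be a proper nonempty sub-collection; so $a$ is ``supported'' on the union of the $q_{i,j}$-cycles for $j\leq \ell_i$. The key point: all the $q$-cycles added to $g$ at stages $t<s$ are finitely many and together involve only finitely many atoms — in fact they all lie inside the finite Boolean subalgebra $\mathcal{D}_s$, whose cardinality is exactly $M$.

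The main step is then the following bound. Let $C$ be the union of all atoms occurring in the finitely many $q$-cycles added before stage $s$; then $C$ is a finite set, $C \subseteq \mathcal{D}_s$, and any $a$ as in \eqref{equ:aux-001} satisfies $a \leq_{\mathcal{B}} \bigcup C$ with $a\neq 0_{\mathcal{B}}$. Now I would argue that $(M+1)$-many pairwise disjoint such elements are impossible: pairwise disjoint nonzero elements all below $\bigcup C$ correspond to pairwise disjoint nonempty sets of atoms from $C$, and the number of atoms in $\mathcal{D}_s$ is at most $M$ (indeed $\operatorname{card}(\mathcal{D}_s) = M$, so it has at most $\log_2 M < M$ atoms, but even the crude bound ``at most $M$ pairwise disjoint nonzero elements in a Boolean algebra of size $M$'' suffices). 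Hence there is no room for $M+1$ pairwise disjoint nonzero elements supported on $C$, which contradicts the hypothesis. This proves the Claim; from the Claim one concludes in case~(i.e) that $\mathcal{P}_e$, which \emph{does} exhibit $(M+1)$-many such pairwise disjoint atoms $a_0,\dots,a_M$ by the construction of $\mathcal{Q}_s$, cannot be isomorphic to $\mathcal{A}^{\ast}$, so $R_e$ can be (safely) deactivated.

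The part requiring the most care is making precise the claim ``$FOrb(a)$ has cardinality $N_i$ forces $a$ to be supported exactly on a union of full $q_{i,j}$-cycles, using a proper sub-collection of atoms within each.'' This is essentially the computation already sketched in the proof of Lemma~\ref{lemma:length}: writing $a$ as a sum of atoms, $F_{[g]}$ acts on $a$ by moving the ``selected atom pattern'' around each cycle independently, so the orbit length of $a$ is the least common multiple of the orbit lengths contributed by each cycle $a$ genuinely intersects-but-does-not-contain; since each $q$-cycle is a single $q$-cycle under $g$, that contribution is either $1$ (if $a$ contains all or none of the cycle's atoms) or exactly $q$. Therefore $\operatorname{card}(FOrb(a)) = \prod q_{i,j}$ over exactly those cycles that $a$ meets properly, and this equals $N_i$ iff the set of such cycles is precisely $\{q_{i,1},\dots,q_{i,\ell_i}\}$. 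Once this is in hand, the counting argument above is immediate, and no genuinely new idea beyond Lemma~\ref{lemma:length} and a pigeonhole on the finitely many pre-stage-$s$ cycle atoms is needed.
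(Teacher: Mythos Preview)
Your argument has a gap at the step ``any $a$ as in \eqref{equ:aux-001} satisfies $a\leq_{\mathcal B}\bigcup C$''. Your own analysis in the last paragraph only shows that the set of primes $q$ whose $q$-cycle is \emph{properly} met by $a$ is exactly $\{q_{i,1},\dots,q_{i,\ell_i}\}$. It does not rule out that $a$ additionally contains the \emph{entire} $q$-cycle for some other prime $q$, and in the final algebra $\mathcal{A}^{\ast}$ such primes may well be $p_t$ with $t\geq s$, i.e., cycles added after stage~$s$. For a concrete counterexample, take $a$ equal to a single atom from the $q_{i,1}$-cycle together with all atoms of some $p_{s+1}$-cycle; then $\mathrm{card}(FOrb(a))=q_{i,1}$, which is an admissible $N_i$, yet $a\not\leq_{\mathcal B}\bigcup C$.

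The fix is small and is exactly what the paper does: instead of claiming $a\leq\bigcup C$, observe merely that $a$ must contain \emph{some} atom $a'$ from the (unique) $q_{i,1}$-cycle, and this atom lies in $\mathcal{D}_s$. The assignment $a\mapsto a'$ is injective on pairwise disjoint elements (since $a'\leq a$), so $(M+1)$ pairwise disjoint $a$'s would yield $(M+1)$ distinct elements of $\mathcal{D}_s$, contradicting $\mathrm{card}(\mathcal{D}_s)=M$. Equivalently, you could replace your claim by the weaker $a\cap\bigcup C\neq 0_{\mathcal B}$ and run your counting argument on the restrictions $a\cap\bigcup C$; that also works and is essentially the same idea.
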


\begin{claimproof}
Suppose that an element $a\in \mathcal{A}^{\ast}$ satisfies Eq.~(\ref{equ:aux-001}).
Consider the prime number $q_{i,1}$. By the proof of Lemma~\ref{lemma:length}, there exists an atom $a' \in Atom(\mathcal{B})$ such that $a' \leq_{\mathcal{B}} a$ and $a'$ belongs to a $q_{i,1}$-cycle. By the definition of the finite algebra $\mathcal{D}_s$, we have $a' \in \mathcal{D}_s$. We define $\theta(a) = a'$.
We notice the following fact: if $a\cap b = 0_{\mathcal{B}}$, then $\theta(a) \neq \theta(b)$. Indeed, if $\theta(a) = \theta(b)$, then $0_{\mathcal{B}} \neq \theta(a) \leq_{\mathcal{B}} a\cap b$.

Now, towards a contradiction, assume that $\mathcal{A}^{\ast}$ contains $(M+1)$-many pairwise disjoint elements $a$ satisfying Eq.~(\ref{equ:aux-001}). Then the algebra $\mathcal{D}_s$ contains at least $(M+1)$-many pairwise distinct elements $\theta(a)$. This contradicts the choice of $M  = \mathrm{card}(\mathcal{D}_s)$.
\end{claimproof}

Indeed, recall that $a_i\cap a_j = 0_{\mathcal{P}_e}$ for $i\neq j$. Hence, the structure $\mathcal{P}_e$ \emph{does contain} $(M+1)$-many disjoint elements $a_0,\dots,a_{M}$ satisfying Eq.~(\ref{equ:aux-001}). Thus, Claim~\ref{claim-Case-i.e} implies that $\mathcal{P}_e \not\cong \mathcal{A}^{\ast}$. 
Therefore, in Case~(i.e), we safely declare the requirement $R_e$ deactivated. 


This concludes the description of the strategy for $R_e = R_{e_0}$ which is on the alert. 
After executing this strategy, one-by-one, we execute strategies for the currently active requirements $R_{e'}$ (see below). Notice that in Cases~(i.c) and~(i.d) above, this execution also includes our requirement $R_{e_0}$ ($R_{e_0}$ had been on the alert, but then it moved to becoming active).

\subsubsection{Strategy for an Active Requirement $R_e$}

As usual, here we assume the following: if the requirement $R_e$ was active at the end of the previous stage $s-1$, then it satisfied Property~($\dagger$) at that moment. 
Consider the witness $c_e$ for $R_e$. 
Recall that
\begin{equation}  \label{equ:length-agreement}
	L = \prod_{j=1}^{s} p_j.
\end{equation}
We compute the values $f_{\mathcal{P}_e}^{(i)}(c_e)$, for all $i\leq L$. One of the following two cases is satisfied:


\proofsubparagraph{Case~(ii.a).} Suppose that $f_{\mathcal{P}_e}^{(j)}(c_e) = f_{\mathcal{P}_e}^{(k)}(c_e)$ for some $j<k\leq L$. 
Then we have $FOrb_{\mathcal{P}_e}(c_e) \subseteq \{ f_{\mathcal{P}_e}^{(i)}(c_e) : i\leq L\}$. In particular, the set $FOrb_{\mathcal{P}_e}(c_e)$ is finite. For $N = \mathrm{card}(FOrb_{\mathcal{P}_e}(c_e))$, we compute its prime decomposition
$
	N = q_1^{\alpha_1} \cdot q_2^{\alpha_2} \cdot \ldots \cdot q_{\ell}^{\alpha_\ell}.
$

Similarly to Case~(i.b), if for some $i\leq \ell$ either $q_i$ was already forbidden, or $q_i = 2$, or $\alpha_i\geq 2$, then $\mathcal{P}_e \not\cong \mathcal{A}^{\ast}$. Thus, we declare such $R_e$ deactivated. Hence, in what follows we may assume that 
$
	N = q_1 \cdot q_2 \cdot \ldots \cdot q_{\ell},
$
where $2 < q_1 < q_2 < \dots < q_{\ell}$.

If some $q_i$ equals $p_s$, then we \emph{forbid} adding a $p_s$-cycle to the map $g$. Since $\mathcal{A}^{\ast}$ will not have $p_s$-cycles, by Lemma~\ref{lemma:length}, we deduce that $\mathcal{P}_e$ is not isomorphic to $\mathcal{A}^{\ast}$. We safely declare the requirement $R_e$ deactivated.

If $p_s\not\in \{ q_i : i\leq \ell\}$, then $R_e$ stays active. Here we claim that some $q_i$ \emph{must be equal} to $p_t$ for some $t >s$. Indeed, if $R_e$ was already active at the stage $s-1$, then this fact is guaranteed by the `$(s-1)$-version' of Property~($\dagger$). Otherwise, $R_e$ moved from being on the alert to being active at the stage $s$. But then this move was triggered by Case~(i.c), and some $p_t$ with $t>s$ must divide $N$. 
We observe that in Case~(ii.a), the requirement $R_e$ will satisfy Property~($\dagger$) at the end of the stage $s$.


\proofsubparagraph{Case~(ii.b).} Otherwise, $f_{\mathcal{P}_e}^{(j)}(c_e) \neq f_{\mathcal{P}_e}^{(k)}(c_e)$ for all $j<k\leq L$. Then the requirement $R_e$ stays active. 
Observe that here $N = \mathrm{card}(FOrb_{\mathcal{P}_e}(c_e)) \geq L+1$.

In Case~(ii.b) we need to show that the requirement $R_e$ will still satisfy Property~($\dagger$) by the end of the stage $s$. Towards a contradiction, assume that Property~($\dagger$) fails. Then the set $FOrb_{\mathcal{P}_e}(c_e)$ is finite, and we have $N = q_1 \cdot q_2\cdot \ldots \cdot q_{\ell}$, where the primes $q_i$ satisfy $2 < q_1 < q_2 <\dots <q_\ell \leq p_s$. By Eq.~(\ref{equ:length-agreement}), we obtain that
$
	N = q_{\ell} \cdot q_{\ell-1} \cdot \ldots \cdot  q_2\cdot q_1 \leq p_s \cdot p_{s-1} \cdot \ldots \cdot p_{s-(\ell-2)} \cdot p_{s-(\ell-1)} \leq L.
$
This contradicts the fact that $N > L$. 

We deduce that in each of the two cases, an active requirement $R_e$ will satisfy Property~($\dagger$) by the end of the stage $s$. This concludes the description of the strategy for an active $R_e$.


If by the end of the stage $s$, no strategy has forbidden to add a $p_s$-cycle to $g$, then we proceed as follows:
\begin{itemize}
	\item Add a $p_s$-cycle to the map $g$.
	
	\item Find the least $i$ such that the requirement $R_i$ has never been on the alert before. Declare this $R_i$ being on the alert.
\end{itemize}



\subsection{Verification}

\begin{lemma}\label{lem:01-totality}
	For every $a\in Atom(\mathcal{B})$, the value $g(a)$ is eventually defined. Consequently, the structure $\mathcal{A}^{\ast}$ from Eq.~(\ref{equ:A-star}) is a well-defined computable modal algebra.
\end{lemma}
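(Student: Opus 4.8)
The plan is to verify two things: first, that the background monitoring procedure and the main construction between them define $g(a)$ for every atom $a$ of $\mathcal{B}$; and second, that Property~(\#) (at most one $p$-cycle per odd prime) holds together with the hypotheses of Lemma~\ref{lemma:length}, so that the modality $F_{[g]}$ is total and computable and hence $\mathcal{A}^{\ast}$ is a well-defined computable modal algebra.

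First I would argue that infinitely many $p$-cycles get added. Recall that whenever a $p_s$-cycle is forbidden at stage $s$, this forbidding action is caused by exactly one strategy -- either the strategy for the requirement $R_{e_0}$ on the alert (Case~(i.b) or (i.e), which deactivate $R_{e_0}$) or the strategy for an active $R_e$ in Case~(ii.a) (which also deactivates $R_e$). In every such case the responsible requirement is \emph{permanently deactivated} at that stage. A requirement that is active but does not forbid a cycle at stage $s$ stays active (Cases (ii.a) with $p_s\notin\{q_i\}$, and (ii.b)), and the strategy for $R_{e_0}$ on the alert forbids a $p_s$-cycle only in the deactivating cases; in Cases (i.c)/(i.d) it becomes active without forbidding. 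Since each requirement can forbid at most once (then it is gone), and at stage $s$ only finitely many requirements (those with index $\le s$) are around, the set of stages at which some cycle is forbidden is contained in a set on which an injection into the (finite-at-each-stage) pool of requirements is defined; in particular, cofinitely many stages add a $p_s$-cycle. Consequently the hypothesis of Lemma~\ref{lemma:length} that infinitely many $p$-cycles are added is met, and Property~(\#) holds because the basic module uses the \emph{least} fresh indices $i,j$ and each prime $p$ is processed at the unique stage $s$ with $p=p_s$.

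Next I would check totality of $g$ on $Atom(\mathcal{B}) = \{u_i, v_i : i\in\mathbb{N}\}$. Each time we add a $p_s$-cycle we consume the least-index undefined $u$'s and $v$'s, namely $u_i,\dots,u_{i+k}$ and $v_j,\dots,v_{j+k-1}$ where $k=\lfloor p_s/2\rfloor$, so we always consume at least two new $u$'s and at least one new $v$ (since $p_s\ge 3$), and the consumed blocks are contiguous initial segments of the still-undefined indices. Since infinitely many $p_s$-cycles are added and each consumes a fixed positive number of fresh $u$- and $v$-indices starting from the least available, every $u_i$ and every $v_i$ is eventually assigned a value; formally, by induction on $n$, after finitely many cycles the values $g(u_0),\dots,g(u_n)$ and $g(v_0),\dots,g(v_n)$ are all defined. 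Hence $g$ is a total map $Atom(\mathcal{B})\to\mathcal{B}$, and it is computable because the construction is effective and, to evaluate $g(a)$, one simply runs the construction until the (computably detectable) stage at which $g(a)$ first receives a value.

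Finally, $\mathcal{B}$ was chosen so that $Atom(\mathcal{B})$ and $Fr(\mathcal{B})$ are computable, and we have just shown $g$ is computable and total; so Lemma~\ref{lemma:modality-00} applies and yields that $F_{[g]}$ is a (total, computable) modality, whence $\mathcal{A}^{\ast}=(\mathcal{B},F_{[g]})$ is a well-defined computable modal algebra, proving the lemma. The main obstacle, and the only part needing genuine care, is the first step: one must be sure that no strategy can forbid cycles at infinitely many stages, which rests on the book-keeping observation that every forbidding action permanently deactivates the responsible requirement and that each requirement acts at most once in this way; everything else is routine.
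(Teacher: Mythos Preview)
Your overall shape is right (forbidding implies permanent deactivation; hence infinitely many cycles; hence $g$ is total), and the second and third paragraphs are fine. But the first paragraph has a genuine gap.

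You argue that the map ``forbidden stage $\mapsto$ responsible requirement'' is an injection into the ``finite-at-each-stage'' pool and conclude that \emph{cofinitely} many stages add a $p_s$-cycle. This inference is invalid: the pool of requirements grows over the stages, so the range of your injection may well be infinite, and indeed the conclusion ``cofinitely many'' is false in general. One can have the pattern in which at every other stage the freshly on-the-alert requirement becomes active and immediately forbids, yielding infinitely many forbidden stages. (A side remark: forbidding happens only in Case~(ii.a); Cases~(i.b) and~(i.e) deactivate but do \emph{not} forbid.)

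The missing ingredient is the book-keeping rule you never invoke: a new requirement is put \emph{on the alert} only at a stage at which a $p_s$-cycle is actually added. The paper uses this via a contradiction argument: if no $p_s$-cycle is added for any $s>s_0$, then no $R_s$ with $s>s_0$ is ever put on the alert, hence never becomes active; only the finitely many $R_e$ with $e\le s_0$ can ever forbid, and each at most once. Picking a stage $s^{\ast}>s_0$ after all of these have finished, nobody forbids $p_{s^{\ast}}$, so a $p_{s^{\ast}}$-cycle is added---contradiction. This yields ``infinitely many cycles are added,'' which is exactly what you need (and all that is true). Once you insert this observation, your argument goes through; without it, the step from ``injection into a growing pool'' to ``(co)finitely many forbidden stages'' does not follow.
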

\begin{proof}
	It is sufficient to show that our construction adds a $p_s$-cycle for infinitely many $s\geq 1$. Towards a contradiction, assume that for every $s> s_0$, the structure $\mathcal{A}^{\ast}$ never gets a $p_s$-cycle. Then for every $s>s_0$, the requirement $R_s$ is never declared on the alert. Consequently, such $R_s$ never becomes active.

	Choose a large enough stage $s^{\ast} > s_0$ with the following property: if a requirement $R_e$, where $e\leq s_0$, is eventually declared deactivated, then $R_e$ was deactivated \emph{before} the stage $s^{\ast}$. Since the $p_{s^{\ast}}$-cycle is forever forbidden, this forbiddance was triggered by the following action: at the stage $s^{\ast}$, some currently active requirement $R_i$, where $i\leq s_0$, forbade $p_{s^{\ast}}$, and after that this $R_i$ was deactivated. But this contradicts the choice of the stage $s^{\ast}$. 
	We deduce that $g(a)$ is defined for all atoms $a$ from $\mathcal{B}$. Lemma~\ref{lem:01-totality} is proved.
\end{proof}


\begin{lemma}\label{lem:02-reqs-satisfied}
	Every requirement $R_e$ is satisfied.
\end{lemma}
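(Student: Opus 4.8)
The plan is to fix $e$ and analyse the life-cycle of $R_e$. First I would record two consequences of the earlier work. By Lemma~\ref{lem:01-totality} the construction adds a $p_s$-cycle for infinitely many $s$, so $\mathcal{A}^{\ast}$ satisfies Property~(\#) and the hypotheses of Lemma~\ref{lemma:length}; consequently every forward orbit in $\mathcal{A}^{\ast}$ is finite, and any $a\in\mathcal{A}^{\ast}$ with $F_{[g]}(a)\neq a$ and $1_{\mathcal{B}}\notin FOrb(a)$ falls under Case~3 of that lemma, so $\mathrm{card}(FOrb(a))$ is a product of pairwise distinct odd primes. Also, since adding a $p_s$-cycle always triggers putting the least not-yet-alerted requirement on the alert, and $R_0$ is on the alert from stage~$0$, either $R_e$ gets deactivated at some stage, or $R_e$ is eventually declared on the alert.

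\emph{Case 1: $R_e$ is deactivated at some stage.} Here I would argue that each rule that deactivates a requirement was set up precisely so that $\mathcal{P}_e\not\cong(\mathcal{A}^{\ast},\top_0,\top_1)$. For deactivation via the monitoring conditions~(a)--(d) this is the remark following their statement (using Lemma~\ref{lemma:length}); for Cases~(i.a) and~(i.b) it is Lemma~\ref{lemma:length} (an $Fr$-element of $\mathcal{A}^{\ast}$ whose orbit misses $1_{\mathcal{B}}$ and is non-trivial has a purely periodic orbit of squarefree odd-prime cardinality, so neither the $\rho$-shape of~(i.a) nor the bad factorisation of~(i.b) can occur); for Case~(i.e) it is exactly Claim~\ref{claim-Case-i.e}; and for Case~(ii.a) it is again Lemma~\ref{lemma:length}, either because the orbit cardinality of $c_e$ factors badly, or because forbidding a $p_s$-cycle permanently keeps $p_s$ out of the orbit spectrum of $\mathcal{A}^{\ast}$. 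In every sub-case $R_e$ is satisfied.

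\emph{Case 2: $R_e$ is never deactivated.} Then $R_e$ is eventually declared on the alert, say at stage~$s_0$; at that stage its strategy falls into one of Cases~(i.a)--(i.e), and since~(i.a), (i.b) and~(i.e) all deactivate, it must be~(i.c) or~(i.d). So $R_e$ becomes active with a witness $c_e=a_i$ (an atom of the finite subalgebra $\mathcal{Q}_{s_0}$ of $\mathcal{P}_e$ with $f_{\mathcal{P}_e}(a_i)\neq 1_{\mathcal{P}_e}$), and, never being deactivated, it stays active at every later stage. By the choice of $a_i$ and Property~(P.1), $f_{\mathcal{P}_e}(c_e)\neq c_e$ and $1_{\mathcal{P}_e}\notin FOrb_{\mathcal{P}_e}(c_e)$. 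Suppose, toward a contradiction, that $\psi\colon\mathcal{P}_e\to(\mathcal{A}^{\ast},\top_0,\top_1)$ is an isomorphism. Then $\psi(c_e)\neq 0_{\mathcal{B}}$, $F_{[g]}(\psi(c_e))\neq\psi(c_e)$ and $1_{\mathcal{B}}\notin FOrb(\psi(c_e))$, so by the first paragraph $FOrb(\psi(c_e))$ is finite; since $\psi$ preserves forward orbits, $FOrb_{\mathcal{P}_e}(c_e)$ is finite and $r:=\mathrm{card}(FOrb_{\mathcal{P}_e}(c_e))=\mathrm{card}(FOrb(\psi(c_e)))$ is a product of pairwise distinct odd primes. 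On the other hand, $R_e$ is active at every stage $s\geq s_0$, and the construction maintains Property~($\dagger$) for active requirements at every stage; hence for each $s\geq s_0$ some prime $q>p_s$ divides $r$. Since $r$ is a fixed integer with finitely many prime divisors and $p_s\to\infty$, this is impossible, so $(\mathcal{A}^{\ast},\top_0,\top_1)\not\cong\mathcal{P}_e$ and $R_e$ is satisfied.

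I expect the main obstacle to lie in Case~2: one must be confident that an active requirement genuinely carries Property~($\dagger$) forward at \emph{every} stage, including the transition from ``on the alert'' to ``active'' — which is exactly what the case analyses~(i.c)/(i.d) and~(ii.a)/(ii.b) in the construction were designed to guarantee, so the lemma's proof need only invoke it — and that Lemma~\ref{lemma:length} truly pins $\mathrm{card}(FOrb(\psi(c_e)))$ down to a squarefree product of odd primes. With these in hand, the contradiction ``a prime exceeding $p_s$ divides the fixed number $r$ for arbitrarily large $s$'' is immediate, and Case~1 is merely a matter of collecting the per-rule justifications already recorded during the construction.
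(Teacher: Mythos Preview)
Your proposal is correct and follows essentially the same approach as the paper: split into the deactivated and never-deactivated cases, and in the latter combine Property~($\dagger$) with the finiteness of $r=\mathrm{card}(FOrb_{\mathcal{P}_e}(c_e))$ to obtain the contradiction that $r$ has arbitrarily large prime divisors. The only cosmetic difference is that you pull both the finiteness of the orbit and the squarefree-odd-prime form of $r$ from Lemma~\ref{lemma:length} via an assumed isomorphism $\psi$, whereas the paper treats the infinite-orbit sub-case separately and extracts the form of $N$ from the fact that $R_e$ was not deactivated in Case~(ii.a); both routes are equally valid.
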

\begin{proof}
	Since our construction adds a $p_s$-cycle for infinitely many $s$ (as discussed in Lemma~\ref{lem:01-totality}), every requirement $R_e$ is eventually declared being on the alert. If a requirement $R_e$ is eventually deactivated, then (as discussed in detail in the construction description) we have $\mathcal{P}_e \not\cong \mathcal{A}^{\ast}$ and $R_e$ is satisfied.
	
	Suppose that a requirement $R_e$ is never deactivated. Then there exists a stage $s_0$ such that $R_e$ is active at every stage $s \geq s_0$. Consider the corresponding witness $c_e\in\mathcal{P}_e$. 	
	If the forward orbit $FOrb_{\mathcal{P}_e}(c_e)$ is infinite, then by Lemma~\ref{lemma:length}, $\mathcal{P}_e$ is not isomorphic to $\mathcal{A}^{\ast}$. Therefore, we may assume that the set $FOrb_{\mathcal{P}_e}(c_e)$ is finite. 
	
	Since $N = \mathrm{card}(FOrb_{\mathcal{P}_e}(c_e)) <\omega$, there exists a large enough stage $s^{\ast} \geq s_0$ such that for every $s\geq s^{\ast}$, the active requirement $R_e$ satisfies Case~(ii.a) at the stage $s$. The requirement $R_e$ is never deactivated, hence, we have
	$
		N = q_1 \cdot q_2 \cdot \ldots \cdot q_{\ell}
	$
	for some $\ell \geq 1$ and some primes $2 < q_1 < q_2 <\dots < q_{\ell}$. But then Property~($\dagger$) of the construction implies that for every $s \geq s^{\ast}$, there exists a prime $q > p_s$ such that $q$ divides $N$. Hence, $N$ has infinitely many divisors, which gives a contradiction.	We conclude that for every $e$, the structure $\mathcal{A}^{\ast}$ is not isomorphic to $\mathcal{P}_e$.
\end{proof}



\bibliography{ref}

\appendix

\section{Binary successor trees}\label{app:succ-trees}

A binary tree is a fundamental concept in computer science. Its defining properties are as follows: there is a distinguished node $r$, called the root, and each node has at most two children, referred to as the \emph{left} and \emph{right} child. We adopt a representation that provides left and right successor functions, $S^1$ and $S^2$, which map each node to its left or right child, respectively, or to the “empty node” $e$ which indicates the absence of a value. This representation resembles a common memory model for binary trees (see, e.g., \cite{knuth_art_1968}, p. 316).
\begin{definition}[binary successor tree]
  $\mathcal{T}=(T,S^1,S^2,e,r)$ is a binary successor tree iff $S^1$ and $S^2$ are both unary functions satisfying:
  \begin{enumerate}
      \item $S^1$ and $S^2$ do not have any cycles and are injective on $\mathbb{N} \setminus (S^i)^{-1}(e)$,
      \item $ran(S^1) \cap ran(S^2) = \{e\}$,
      \item $ran(S^1) \cup ran(S^2) = \mathbb{N}\setminus \{r\}$,
      \item $S^1(e)=S^2(e)=e$.
  \end{enumerate}
\end{definition}

We observe that punctual presentability of a tree does not depend on including both constants $e$ and $r$ in the signature. Hence any results of this sort remain valid even if either constant or both of them are removed from the signature.

\begin{definition}
   We say that a successor tree is $n$-full if every node of depth $n-1$ has both non-empty successors.
\end{definition}


\begin{theorem}
    The class of successor trees is not punctually robust.
\end{theorem}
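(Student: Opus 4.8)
The plan is a straightforward diagonalization. Fix a uniformly computable list $(\mathcal{P}_e)_{e\in\mathbb{N}}$ of all punctual structures in the signature $\{S^1,S^2,e,r\}$ and build a computable binary successor tree $\mathcal{T}=(\mathbb{N},S^1,S^2,e,r)$ meeting, for every $e$, the requirement
\[
	R_e\colon\qquad \mathcal{P}_e \text{ is a binary successor tree} \implies \mathcal{T}\not\cong\mathcal{P}_e .
\]

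The key observation --- and the place where the functional presentation genuinely differs from the relational successor trees of Cenzer and Remmel \cite{cenzer_feasible_1998} --- is that in a binary successor tree the children of a node $a$ are \emph{exactly} the non-empty-node elements among $S^1(a)$ and $S^2(a)$. Consequently, for a \emph{punctual} structure $\mathcal{P}_e$ the function $\lambda_e$ assigning to each $n$ the number of nodes of $\mathcal{P}_e$ at depth $n$ is computable by a bounded recursion on $n$: starting from the root $r^{\mathcal{P}_e}$, one unfolds $S^1,S^2$ for $n$ steps, which involves only finitely many evaluations of primitive recursive functions. (For relational successor trees, counting the children of even a single node requires an unbounded search, which is exactly why the analogous diagonalization fails there --- consistently with their being punctually robust.) Since $\lambda_e$ is an isomorphism invariant, it suffices to guarantee, for each $e$, that the number of depth-$(e{+}1)$ nodes of $\mathcal{T}$ differs from $\lambda_e(e{+}1)$.

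Accordingly, $\mathcal{T}$ will consist of an infinite ``left spine'' $r=v_0,v_1,v_2,\dots$ with $S^1(v_i)=v_{i+1}$, to which finitely or infinitely many extra leaves are attached, at most one as the right child of each $v_i$. The construction runs in stages; at the start of stage $e$ the tree has been built down to depth $e$, with $v_e$ the current spine tip (which lies at depth $e$). At stage $e$ we compute $q_e:=\lambda_e(e{+}1)$ --- if $\mathcal{P}_e$ fails to be a binary successor tree, read this as the number of elements reachable from $r^{\mathcal{P}_e}$ by a path of length $e{+}1$ through $S^1,S^2$ avoiding the empty node of $\mathcal{P}_e$, still a bounded computation. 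Nothing at depth greater than $e$ has been built before stage $e$, so the children we now give $v_e$ are all the depth-$(e{+}1)$ nodes of $\mathcal{T}$; we attach to $v_e$ one child (which becomes $v_{e+1}$) if $q_e\neq 1$, and two children (one of them $v_{e+1}$, the other a fresh leaf) if $q_e=1$. In either case the number of depth-$(e{+}1)$ nodes of $\mathcal{T}$ lies in $\{1,2\}$ and is $\neq q_e$. New elements are always taken to be the least naturals not yet used, with two reserved values playing the roles of the empty node and the root, so that $\mathrm{dom}(\mathcal{T})=\mathbb{N}$.

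The verification is then routine: $\mathcal{T}$ is a well-defined computable binary successor tree (each value $S^i(a)$ is read off by running the construction up to the stage, at most $a$, at which $a$ is introduced), it is infinite because the spine is extended at every stage, and its domain is all of $\mathbb{N}$. If $\mathcal{P}_e$ is not a binary successor tree, $R_e$ holds vacuously; otherwise $\lambda_e$ is genuinely the depth-counting function of $\mathcal{P}_e$, so $\mathcal{T}$ and $\mathcal{P}_e$ disagree on the number of depth-$(e{+}1)$ nodes and hence $\mathcal{T}\not\cong\mathcal{P}_e$. The heart of the argument is thus the conceptual observation that for the functional presentation the level cardinalities are primitive recursive while for the relational one they are not; beyond that, the only thing requiring care is the bookkeeping that keeps the staged object a valid infinite binary successor tree while exhausting $\mathbb{N}$ --- a single level per requirement already suffices for the diagonalization itself.
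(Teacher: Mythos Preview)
Your proof is correct and follows essentially the same approach as the paper: both observe that the successor \emph{functions} let one compute the entire subtree of a punctual $\mathcal{P}_e$ down to any fixed depth, and then diagonalize one level per requirement. The paper diagonalizes on $s$-fullness (whether every depth-$(s{-}1)$ node has both children), building a nearly-full tree, whereas you diagonalize on the cardinality of level $e{+}1$ and build a spine with occasional extra leaves; the underlying idea is identical and your variant is, if anything, a bit more economical.
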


\begin{proof}
    We fix an enumeration of all punctual structures over the same signature as successor trees $\mathcal{T}_n=(\mathbb{N},S^1_n,S^2_n,e_n,r_n)$ and we want to construct a tree $\mathcal{T}$ satisfying all requirements:
    \[
    (\mathcal{R}_s) \; \mathcal{T} \not \cong \mathcal{T}_s. 
    \]
    The construction will be carried out in infinitely many stages, at each stage $s$ the requirement $\mathcal{R}_s$ is going to be satisfied. At stage $0$ define the empty node to be number $0$ and the root to be number $1$.

    At the beginning of stage $s>0$ all requirements $\mathcal{R}_0, \dots, \mathcal{R}_{s-1}$ have already been satisfied and $\mathcal{T}^{\leq s-1}$ has already been constructed. We calculate all of $\mathcal{T}_s^{\leq s}$. There are several possibilities.

    If $\mathcal{T}_s^{\leq s}$ is not a successor tree, then the requirement is already satisfied and we can move on to the next stage. Similarly if $\mathcal{T}^{\leq s-1} \not \cong \mathcal{T}_s^{\leq s-1}$. Now we need to consider the case when both these trees are isomorphic. We want to ensure that they will somehow be distinguished by nodes of depth $s$. We assume that both of them have $n$ nodes of depth $s-1$: $a_1, \dots, a_{n}$.
    
    If $\mathcal{T}_s^{\leq s}$ is not $s$-full, then we take $2n$ least fresh natural numbers $b_1, \dots, b_{2n}$, add them to the domain of $\mathcal{T}$ and we extend functions $S^1$ and $S^2$, defining $S^1(a_i)=b_i$ and $S^2(a_i)=b_{i+n}$, $1 \leq i \leq n$.

    If $\mathcal{T}_s^{\leq s}$ is $s$-full, then we take $2n-1$ least unused natural numbers $b_1, \dots, b_{2n-1}$, add them to the domain of $\mathcal{T}$ and we extend functions $S^1$ and $S^2$, defining $S^1(a_i)=b_i$ (if $1 \leq i \leq n$), $S^2(a_i)=b_{i+n}$ (when $1 \leq i \leq n-1$) and $S^2(a_n)=0$ (we recall that $0$ represents an empty node). Thus we ensured that $\mathcal{T}^{\leq s}$ is $s$-full iff $\mathcal{T}_s^{\leq s}$ is not $s$-full. Hence these trees are not isomorphic and $\mathcal{R}_s$ is satisfied.

    We observe that whenever $\mathcal{T}$ has $n$ nodes of a certain depth $s$, then there are either $2n$ or $2n-1$ nodes of depth $s+1$. Hence the construction never runs out of nodes to prolong.
\end{proof}

\section{Prefix trees}\label{app:pref-trees}
A prefix tree is a familiar object in computability theory and in descriptive set theory. In general, $T$ is a prefix tree over a set $X$ if $T$ is a subset of $X^{<\omega}$, the set of all finite sequences of elements of $X$, and satisfies the following condition: for every $\vec a, \vec b \in X^{<\omega}$, if $\vec a \in T$ and $\vec b$ is a prefix of $\vec a$, then $\vec b \in T$.

The concept of a prefix tree, as defined above, is not formulated in model-theoretic terms, which we require for assessing punctual robustness. Therefore, we need a precise specification of a domain and a signature, along with defining characteristics that determine which corresponding models qualify as `prefix trees.'

Below, we adopt a definition that appears most natural. However, this definition involves an infinite signature. Unfortunately, the assumed definition of a punctual structure (see Definition \ref{def:punctual structure}) does not specify how to handle cases where the signature is infinite. Thus, we must take a step back.

So far, no generally accepted definition of a punctual structure with an infinite signature exists. Various approaches can be taken to introduce such a notion (for a corresponding framework in polynomial-time algebra, see, e.g., \cite{cenzer_feasibly}). Here, we adopt a particularly strong definition---likely the strongest among all plausible variants.

\begin{definition}
    A structure $\mathcal{A}=(A, (R_i^{\mathcal{A}})_{i \in I},(f_j^{\mathcal{A}})_{j \in J},(c_k^{\mathcal{A}})_{k \in K})$ is punctual if $A$ is equal to $\mathbb N$ or to a finite initial segment of $\mathbb N$, all relations, functions, and constants from the signature are uniformly primitive recursive, and the function assigning to the index of a relation or of a function the arity is primitive recursive.
\end{definition}

Other variants of the above definition could weaken in various ways the requirement that the arity function is primitive recursive, asking only that this function is recursive or that for any fixed number we can decide in a primitive recursive way if a certain fixed function or relation has that many arguments.

\begin{definition}
    $\mathcal{T}=(T,R_1^{\mathcal{T}},R_2^{\mathcal{T}},R_3^{\mathcal{T}},\dots;r^{\mathcal{T}})$ is a prefix tree if each $R_n^{\mathcal{T}}$ is an $n$-ary relation, $r^{\mathcal{T}} \in T$ and the following are satisfied:
    \begin{enumerate}
        \item $R_1^{\mathcal{T}}=\{r^{\mathcal{T}}\}$,
        \item if $R_n^{\mathcal{T}}(a_1,\dots,a_n)$ and $1 \leq i \leq n$, then $R_i^{\mathcal{T}}(a_1,\dots,a_i)$.
    \end{enumerate}
    We say that a prefix tree is injective if whenever $\vec{a}=(a_1,\dots,a_n,c)$ and $\vec{b}=(b_1,\dots,b_k,c)$ and $R_{n+1}^{\mathcal{T}}(\vec{a})$ and $R_{k+1}^{\mathcal{T}}(\vec{b})$, then $n=k$ and $a_1=b_1, \dots, a_n=b_n$.
\end{definition}

The interpretation of the above definition is such that $R_n^{\mathcal{T}}$ is the set of all paths of length $n$ in the prefix tree $\mathcal{T}$ which start at the root $r^{\mathcal{T}}$. This is in line with a common practice of defining trees to be sets of strings containing all prefixes of every string included. We  slightly deviate from that convention in that we do not include the empty string.

\begin{definition}
    If $\vec{a}=(a_1,\dots,a_n)$, $\vec{b}=(b_1,\dots,b_k)$, $k<n$ and $a_i=b_i$ whenever $1 \leq i \leq k$, then we say that $\vec{a}$ extends $\vec{b}$ by $n-k$.
\end{definition}

\begin{theorem}\label{pref}
    The class of injective prefix trees is punctually robust.
\end{theorem}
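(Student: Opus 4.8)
The plan is to mimic the construction for binary successor trees (the proof just given in Appendix~\ref{app:succ-trees}), adapting it to the relational, prefix-closed setting. Fix a uniformly computable enumeration $(\mathcal T_n)_{n\in\mathbb N}$ of all punctual structures in the (infinite, primitive-recursively arity-indexed) signature of prefix trees, and build a computable injective prefix tree $\mathcal T$ satisfying, for every $n$, the requirement $(\mathcal R_n)$: $\mathcal T\not\cong\mathcal T_n$. We work in stages; at stage $s$ we compute the finite approximation $\mathcal T_s^{\le s}$ (all paths of length $\le s$ from the root), and we will have already built $\mathcal T^{\le s-1}$ and satisfied $\mathcal R_0,\dots,\mathcal R_{s-1}$.

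The key steps at stage $s>0$: if $\mathcal T_s^{\le s}$ fails to be an (injective) prefix tree, $\mathcal R_s$ is met trivially; likewise if $\mathcal T^{\le s-1}\not\cong\mathcal T_s^{\le s-1}$, we are done and just extend $\mathcal T$ arbitrarily (keeping it growing). Otherwise the two trees agree up to depth $s-1$; say both have $m$ many depth-$(s-1)$ leaves. We look at whether $\mathcal T_s^{\le s}$ extends \emph{every} depth-$(s-1)$ node by at least one child (the prefix-tree analogue of ``$s$-full''). If it does, we make $\mathcal T^{\le s}$ \emph{not} do so: give all but one of our depth-$(s-1)$ nodes a single fresh child and leave the last one childless. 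If $\mathcal T_s^{\le s}$ leaves some depth-$(s-1)$ node childless, we instead give every depth-$(s-1)$ node of $\mathcal T$ exactly one fresh child. Either way $\mathcal T^{\le s}$ and $\mathcal T_s^{\le s}$ differ on whether all depth-$(s-1)$ nodes have successors, so they are non-isomorphic and $\mathcal R_s$ is satisfied; and since at every stage each current leaf gets a child (except at most one), $\mathcal T$ never stops growing and is infinite. Injectivity is preserved because we only ever attach \emph{fresh} elements as children, so no two paths ever reconverge. Computability of the relations $R_n^{\mathcal T}$ is clear: to decide $R_n^{\mathcal T}(\bar a)$, run the construction until stage $n$ (by which point all paths of length $\le n$ are determined) and read off the answer; the arity function of the signature is primitive recursive by fiat.

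One small technical point to handle carefully: unlike successor trees, here nodes can have arbitrarily many children, and ``$\mathcal T_s^{\le s}$ is full at depth $s-1$'' must be phrased as ``each depth-$(s-1)$ node has $\ge 1$ child'', not ``exactly two''; and we must make sure the child we add to $\mathcal T$ is genuinely new so that no $R_k^{\mathcal T}$ for $k\le s-1$ is disturbed and injectivity is not violated. The only real subtlety—the main obstacle—is bookkeeping the fact that $\mathcal T_s^{\le s-1}\cong\mathcal T^{\le s-1}$ is a decidable condition and that, having decided it, we can locate the depth-$(s-1)$ nodes on both sides uniformly; this is routine since both trees are finite at that stage. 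After that, the diagonalization is exactly the ``parity of fullness'' trick used for binary successor trees, and the verification (domain is $\mathbb N$, relations are primitive recursive, the structure is an injective prefix tree, every $\mathcal R_n$ is met) goes through line by line as there.
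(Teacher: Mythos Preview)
You are proving the wrong statement. Theorem~\ref{pref} asserts that injective prefix trees \emph{are} punctually robust: every computable injective prefix tree has a punctual copy. Your proposal is a diagonalization intended to build a computable injective prefix tree with \emph{no} punctual copy, i.e., a proof of non-robustness. The construction you are mimicking from Appendix~\ref{app:succ-trees} establishes that binary successor trees are \emph{not} punctually robust; the present theorem has the opposite polarity. The paper's proof fixes an arbitrary computable injective prefix tree $\mathcal{T}$ and constructs a punctual $\mathcal{R}\cong\mathcal{T}$ (using a reservoir/waiting-set argument in the unbounded-depth case, and an infinitely-branching-node trick otherwise).

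Even setting the direction aside, your diagonalization would fail on its own terms. In the successor-tree setting the signature is \emph{functional}: a punctual adversary must commit to $S^1(x),S^2(x)$ immediately, so ``$x$ has no $i$-th child'' is a permanent fact once observed. Prefix trees are purely \emph{relational}. If at stage $s$ the approximation $\mathcal{T}_s^{\le s}$ shows some depth-$(s-1)$ node $b$ with no child among the elements seen so far, the adversary is free to later reveal an element $c>s$ with $R_s^{\mathcal{T}_s}(\ldots,b,c)$. Your ``fullness'' witness is therefore not stable, and the step ``$\mathcal{T}^{\le s}\not\cong\mathcal{T}_s^{\le s}$, hence $\mathcal{T}\not\cong\mathcal{T}_s$'' is invalid: a later-revealed child can restore the isomorphism type at depth $s$. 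This is precisely why injective prefix trees (unlike binary successor trees) are punctually robust, and it is the mechanism the paper exploits in its positive construction.
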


\begin{proof}
    We fix a computable injective prefix tree $\mathcal{T}$. We are going to construct a punctual $\mathcal{R} \cong \mathcal{T}$. We assume that $\mathcal{T}$ has an infinite branch. Otherwise the tree has an infinitely branching node and the proof is standard.
    The general idea is that we enumerate the children of that node while waiting to copy the rest of the tree. Also please check the proof of Proposition~3 in \cite{kalocinski_punctual_2024}.

    
    The construction will be carried out in infinitely many stages and at the end of each stage $s$ we will have determined whether any fixed $s$-bounded $(a_1,\dots,a_n) \in R_n^{\mathcal{T}}$. Here we say that $(a_1,\dots,a_n)$ is $s$-bounded if $a_1,\dots,a_n \leq s$. At the end of stage $s-1$ we have already recovered some $\mathcal{T}_{s-1}$ (a fragment of $\mathcal{T})$ and constructed $\mathcal{R}_{s-1}$---the image of $\mathcal{T}_{s-1}$ under the isomorphism $\varphi$.

   During the entire construction we run an algorithm calculating $\mathcal{T}$ and build an isomorphism $\varphi: \mathcal{T} \to \mathcal{R}$. We are also going to build $W$---a set of all paths in $\mathcal{T}$ which have already been discovered but are still waiting to get implemented into $\mathcal{R}$. Initially $W=\emptyset$.
   
   At stage $s$ we check if we obtained any new information about $\mathcal{T}$. If not, then we declare that all $s$-bounded $(a_1,\dots,a_n)$ do not belong to $R_n^{\mathcal{R}}$ unless we already declared earlier that they do.

   Now we assume that at stage $s$ we discovered that some $\vec{a}=(a_1,\dots,a_n) \in R_n^{\mathcal{T}}$ and that $\vec{b}=(b_1,\dots,b_m)$ is the longest branch in $\mathcal{T}_{s-1}$ extended by $\vec{a}$. If $n < m+2$, then we add the path $(a_1,\dots,a_n)$ to $W$.

   If $n \geq m+2$, then we extend $\mathcal{R}$ with all the paths in $W$ and then we declare that $W:=\emptyset$. Below we describe in more detail how this is done.

   Whenever $W$ contains a path $\vec{a}=(a_1,\dots,a_n)$ and $\vec{b}=(b_1,\dots,b_k) \in R_k^{\mathcal{T}}$ is the longest branch already copied to $\mathcal{R}$ extended by $\vec{a}$, then we take $t:=n-k-1$ least unused numbers $c_1,\dots,c_t$ and the least unused number $c_{t+1} \geq s$ and we define $\varphi(a_{i+k})=c_i$, $1 \leq i \leq t$ and $\varphi(a_n)=c_{t+1}$. We observe that $\vec{C}=\varphi(\vec{a})$ contains an element $c_{t+1} \geq s$ and hence has not been banned from entering $R_n^{\mathcal{R}}$ at any earlier stage. We also observe that since $\mathcal{T}$ contains an infinite path, there will always be a stage when some already existing branch is extended by at least two, and hence every path from $W$ will at some point be incorporated into $\mathcal{R}$.
\end{proof}

\begin{theorem}
    The class of prefix trees is not punctually robust.
\end{theorem}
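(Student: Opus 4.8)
The plan is to reduce to the known fact that there is a computable undirected graph with no punctual copy \cite{kalimullin_algebraic_2017}, and to \emph{encode} such a graph into a computable prefix tree of depth $3$, exploiting exactly the feature (non-injectivity) that is absent from Theorem~\ref{pref}: in a non-injective prefix tree a single node may be a common child of arbitrarily many nodes, so a depth-$3$ prefix tree can carry the full adjacency relation of an arbitrary graph. Concretely, fix a computable graph $\mathcal{G}=(\mathbb{N},E)$ ($E$ symmetric and irreflexive) with no punctual copy. Build the computable prefix tree $\mathcal{T}$ with domain $\mathbb{N}$, root $r^{\mathcal{T}}=0$, and relations $R_1^{\mathcal{T}}=\{0\}$, $R_2^{\mathcal{T}}(a,b)\Leftrightarrow a=0\wedge b\geq 1$ (so every non-root element is a child of the root), $R_3^{\mathcal{T}}(a,b,c)\Leftrightarrow a=0\wedge b\geq1\wedge c\geq1\wedge E(b-1,c-1)$, and $R_n^{\mathcal{T}}=\emptyset$ for $n\geq 4$. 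The prefix-tree axioms hold: condition (1) is immediate, and for condition (2) the only nontrivial case is that $R_3^{\mathcal{T}}(0,b,c)$ forces $b\geq1$, hence $R_2^{\mathcal{T}}(0,b)$, and $R_1^{\mathcal{T}}(0)$. Moreover $\mathcal{T}$ is a computable structure in the (infinite) prefix-tree signature: the arity function is the identity, and the relations are uniformly computable from $E$. Note that $\mathcal{T}$ is not injective once $\mathcal{G}$ has a vertex of degree $\geq 2$, as it must, consistently with Theorem~\ref{pref}.

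Next I would show $\mathcal{T}$ has no punctual copy. Suppose towards a contradiction that $\mathcal{P}$ is punctual with an isomorphism $\sigma\colon\mathcal{T}\to\mathcal{P}$. Since $\mathcal{T}$ is infinite, $\mathcal{P}$ has domain $\mathbb{N}$, and $r^{\mathcal{P}}:=\sigma(0)$ is the value of the root constant. As $R_3^{\mathcal{P}}$ is primitive recursive, so is the relation $E'$ on $\mathbb{N}$ defined by $E'(i,j)\Leftrightarrow R_3^{\mathcal{P}}(r^{\mathcal{P}},h(i),h(j))$, where $h\colon\mathbb{N}\to\mathbb{N}\setminus\{r^{\mathcal{P}}\}$ is the canonical primitive recursive bijection ($h(n)=n$ for $n<r^{\mathcal{P}}$ and $h(n)=n+1$ otherwise). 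Then $(\mathbb{N},E')$ is a punctual structure, and unwinding the definitions — using $E(m,n)\Leftrightarrow R_3^{\mathcal{T}}(0,m+1,n+1)\Leftrightarrow R_3^{\mathcal{P}}(r^{\mathcal{P}},\sigma(m+1),\sigma(n+1))$ together with the fact that $m\mapsto\sigma(m+1)$ is a bijection of $\mathbb{N}$ onto $\mathbb{N}\setminus\{r^{\mathcal{P}}\}$ — shows $(\mathbb{N},E')\cong\mathcal{G}$. This contradicts the choice of $\mathcal{G}$, so no punctual $\mathcal{P}$ exists, and the class of prefix trees is not punctually robust.

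I do not expect a genuine obstacle here; the only point requiring care is the bookkeeping that keeps the domain of the extracted graph equal to $\mathbb{N}$ rather than a mere primitive recursive subset. This is precisely why the encoding is arranged so that \emph{every} non-root node is a child of the root: in any punctual copy $\mathcal{P}$ the set of children of the root is forced to be $\mathbb{N}\setminus\{r^{\mathcal{P}}\}$, so recovering the ``graph part'' inside $\mathcal{P}$ costs only the deletion of the single element $r^{\mathcal{P}}$, which is handled by the fixed primitive recursive reindexing $h$. Everything else is routine verification of the prefix-tree axioms and of the isomorphism transfer.
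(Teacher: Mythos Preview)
Your argument is correct. The encoding of a non-punctually-presentable graph $\mathcal{G}=(\mathbb{N},E)$ into a depth-$3$ prefix tree is sound, the prefix-tree axioms are verified, and the recovery of a punctual copy of $\mathcal{G}$ from a hypothetical punctual $\mathcal{P}\cong\mathcal{T}$ via $E'(i,j)\Leftrightarrow R_3^{\mathcal{P}}(r^{\mathcal{P}},h(i),h(j))$ is clean; the bijection $\tau=h^{-1}\circ\sigma\circ(+1)$ indeed witnesses $(\mathbb{N},E')\cong\mathcal{G}$, and $E'$ is primitive recursive since $r^{\mathcal{P}}$ is a fixed constant and $R_3^{\mathcal{P}}$ is primitive recursive.

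Your route, however, is genuinely different from the paper's. The paper does not go through graphs at all: it introduces an auxiliary notion of a \emph{binary prefix tree} (domain $\{0,1\}$), observes that such a structure is punctually presentable iff it is already punctual (the only candidate isomorphism is the identity once the root is fixed), picks a computable but non-punctual binary prefix tree $\mathcal{T}_b$, and then pads it out to domain $\mathbb{N}$ by adjoining the single infinite branch $(1,2,\dots,i)$ for each $i$. From a punctual copy $\mathcal{P}$ of this padded tree one recovers punctuality of $\mathcal{T}_b$ itself by non-uniformly fixing the two constants $\sigma(0),\sigma(1)$ and reading off $R_n^{\mathcal{T}_b}(\bar a)=R_n^{\mathcal{P}}(\sigma(\bar a))$. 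So the paper locates the obstruction in the \emph{infinite signature} (the complexity lives in the sequence $(R_n)_n$), whereas your construction locates it in \emph{non-injectivity at bounded depth}: only $R_1,R_2,R_3$ are nonempty, which incidentally shows the sharper fact that already depth-$3$ prefix trees fail punctual robustness, and makes the contrast with Theorem~\ref{pref} (injective prefix trees) explicit. The paper's approach is more self-contained; yours imports the graph result from~\cite{kalimullin_algebraic_2017} but yields a tighter structural conclusion.
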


\begin{proof}
  First we define an auxiliary notion of a binary prefix tree. The definition is the same as in the case of prefix trees on an infinite set $T$ with the only difference being that the domain is the set $A=\{0,1\}$.
  
  We observe that any binary prefix tree is punctually presentable if and only if that tree is already punctual and hence that not all of them are punctually presentable.

  We fix a non-punctually presentable binary prefix tree $\mathcal{T}_b= (A,R_1^{\mathcal{T}_b},R_2^{\mathcal{T}_b},\dots, r^{\mathcal{T}_b} )$. We assume that the root $r^{\mathcal{T}_b}$ is $1$. We define $\mathcal{T}=(\mathbb{N},R_1^{\mathcal{T}},R_2^{\mathcal{T}},\dots,r^{\mathcal{T}})$ in the following way. For each $i\geq 1$, $R_i^{\mathcal{T}}=R_i^{\mathcal{T}_b} \cup \{(1,2,\dots, i)\}$ and $r^{\mathcal{T}}=1$. We observe that if $\mathcal{T}$ had a punctual presentation, then so would $\mathcal{T}_b$. 
\end{proof}

The definition below works with both finite and infinite signatures and is very similar to that in the punctual case.

\begin{definition}
    A structure $\mathcal{A}=(\mathbb{B},(R_i^{\mathcal{A}})_{i \in I},(f_j^{\mathcal{A}})_{j \in J},(c_k^{\mathcal{A}})_{k \in K})$ (where each of the sets $I,J,K$ is either the set of all positive natural numbers or a finite initial segment thereof) is fully P-TIME if all relations, functions, and constants from the signature are uniformly P-TIME and the function assigning to the index of a relation or a function the arity is P-TIME.
\end{definition}

We understand the above definition in the following way. Initially we have a finite sequence of elements of the alphabet $\{0,1,a,b,c,d\}$ on the tape. The input begins with a sequence of letters $a$, $b$ or $c$, where the number of occurrences of the letter symbolises the index of, respectively, a relation, a function or a constant under consideration. This is followed by the list of arguments of a relation or a function or the element of the domain to be represented by the constant. When dealing with non-unary relations or functions, the arguments are separated with the letter $d$. Hence, if we want to check if $(001,010) \in R_3^{\mathcal{A}}$, we are going to work on the sequence $aaa001d010$ as the input.

We require that there is a polynomial function $P$ such that for any input (defined as above) of length $n$, the algorithm halts in at most $P(n)$ steps. This is a very strong definition, and some weaker variants are possible. A natural weakening of the definition would be to have individual polynomial bounds on different relations and functions from the signature (preferably retrievable in a primitive recursive way). A variant of a definition was considered in \cite{cenzer_feasibly}.

\begin{theorem}
    The class of injective prefix trees of unbounded depth is P-TIME robust.
\end{theorem}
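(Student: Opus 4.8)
The plan is to merge the waiting-set argument from the proof of Theorem~\ref{pref} with the reservoir-and-delay technique from the proof of Theorem~\ref{ptime}. Fix a computable injective prefix tree $\mathcal{T}=(\mathbb{B}, R_1^{\mathcal{T}}, R_2^{\mathcal{T}},\dots;\, r^{\mathcal{T}})$ of unbounded depth; I will build a fully P-TIME copy $\mathcal{R}$ with $\mathrm{dom}(\mathcal{R})=\mathbb{B}$, together with an isomorphism $\varphi\colon\mathcal{T}\to\mathcal{R}$, by running an algorithm for $\mathcal{T}$ in stages $s=0,1,\dots$, a fixed number $l$ of steps per stage. As usual, one may assume that at most one new positive fact $R_n^{\mathcal{T}}(a_1,\dots,a_n)$ is confirmed per stage, and (by the prefix-tree axioms) that confirming it also confirms all of its initial segments. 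Besides $\varphi$, I maintain a set $W$ of discovered-but-not-yet-copied paths and a reservoir $A\subseteq\mathbb{B}$ of fresh strings ordered by the length-lexicographic order $\sqsubset$; at the end of each stage the $\sqsubset$-least string outside $\mathcal{R}_s$ is thrown into $A$.

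The governing rule is the familiar \emph{extend-only-by-}$\ge 2$ policy. When at stage $s$ we discover a path $\bar a=(a_1,\dots,a_n)\in R_n^{\mathcal{T}}$ whose longest already-copied initial segment has length $m$, we put $\bar a$ into $W$ if $n<m+2$; otherwise ($n\ge m+2$) we \emph{implement}: processing the members of $W$ in order of decreasing length (so shared prefixes get handled first), for each waiting path with currently-copied prefix of length $k$ we extend $\varphi$ to its new nodes $a_{k+1},\dots,a_n$, giving the \emph{first} new node $a_{k+1}$ the $\sqsubset$-least unused string of length $\ge s$ (a ``long'' string) and the remaining new nodes $\sqsubset$-least strings pulled from $A$ (``short'' strings), and then we set $W:=\emptyset$; the root is placed at stage $0$ as a fixed short string. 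Putting a long string on the first node of every freshly copied segment is the crucial bookkeeping: it guarantees that whenever $(c_1,\dots,c_n)$ becomes a genuine branch of $\mathcal{R}$, the last $c_i$ to be placed, at some stage $s^{\ast}$, lies on a segment one of whose nodes on this very path received a long string of length $\ge s^{\ast}$; hence $s^{\ast}\le\sum_i|c_i|$.

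Verification runs along the usual four lines. First, an analogue of Lemma~\ref{LemmaCong}: implementations occur infinitely often, since the recovered depth of $\mathcal{T}_s$ tends to infinity and, were there no implementation after some stage $s_0$ with longest copied branch of length $B$, any subsequently discovered path of length $\ge B+2$ (which exists) would have a copied initial segment of length $\le B$ and would trigger one. Second, $\varphi$ is therefore a total, injective, structure-preserving isomorphism $\mathcal{T}\to\mathcal{R}$: every node of $\mathcal{T}$ is eventually recovered and then copied at the next implementation. Third, $\mathrm{dom}(\mathcal{R})=\mathbb{B}$: with a suitable tie-breaking rule each implementation consumes at least one string from $A$, so the $\sqsubset$-least string outside $\mathcal{R}_s$ is $\sqsubset$-strictly increasing and hence every binary string is eventually placed. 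Fourth, each $R_n^{\mathcal{R}}$ is P-TIME uniformly in $n$, and the arity map $n\mapsto n$ (read off from the unary-coded index) is trivially P-TIME: to decide $R_n^{\mathcal{R}}(c_1,\dots,c_n)$ we simulate the construction up to stage $\sum_i|c_i|$ — polynomially many $\mathcal{T}$-steps, polynomially many short strings of length $O(\log s)$, and finitely many long strings of length at most that stage — and accept iff the relation has been confirmed by then; correctness is exactly the bound $s^{\ast}\le\sum_i|c_i|$.

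The hard part is the third step and its interplay with the fourth: the reservoir has to be poured into the \emph{thin} parts of $\mathcal{T}$ — along an infinite ray, say, where each new level of $\mathcal{R}$ accommodates only one genuine tree node — at the same time as the long-string discipline is kept intact. This is the very difficulty that forced the infinite-branch hypothesis in Theorem~\ref{pref}; here, where we only assume unbounded depth but must land on all of $\mathbb{B}$, the processing order for $W$ and the precise long/short split at each implementation must be arranged so that enough short strings are absorbed while no short node is ever placed ``too deep'' to be confirmed within $\sum_i|c_i|$ stages.
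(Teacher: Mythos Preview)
Your proposal takes the same route as the paper's sketch --- merge the waiting-set and extend-by-at-least-two argument of Theorem~\ref{pref} with the reservoir and short/long string discipline of Theorem~\ref{ptime} --- and is correct in outline. The one substantive difference is that you invert the paper's short/long assignment: the paper's sketch makes the connecting node $b$ (the first new node after the already-copied leaf) the short string and uses long strings for all other newly discovered elements, whereas you make the first new node of each segment long and the remaining new nodes short. Your choice actually yields a more transparent polynomial-time verification, since every positive path in $\mathcal{R}$ then carries, within its last-placed segment, a long string whose length bounds the confirmation stage; with the paper's assignment one has to argue separately about paths terminating at the unique short node of an implementation.

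Two small loose ends are worth tightening. First, say what happens if the reservoir $A$ is exhausted mid-implementation (falling back to long strings for the overflow is harmless for both the bound and eventual surjectivity onto $\mathbb{B}$). Second, the hedging in your final paragraph is unnecessary: the thin-ray case is already handled by your stated policy, because the triggering path always extends by at least two and therefore absorbs at least one reservoir string at every implementation, while implementations occur infinitely often by the unbounded-depth hypothesis.
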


\begin{proof}
    We only sketch the proof. The general idea of the construction is very similar to that of Theorem \ref{pref}. To ensure that the construction works in polynomial time we introduce and use the distinction between short and long sequences from the proof of Theorem \ref{ptime}.

    We copy a fixed tree $\mathcal{T}$ into some P-TIME tree $\mathcal{R}$ while also constructing an isomorphism $\varphi$ between them. Whenever some path in $\mathcal{T}$ had already been copied to $\mathcal{R}$, the current leaf in the tree under construction is $\alpha$ such that $\varphi(a)=\alpha$ for some $a \in \mathcal{T}$ and we have discovered $b$ and $c$ such that $b$ is the child of $a$ and $c$ is the child of $b$, then we prolong the isomorphism. We set $\varphi(b)$ to be the least unused short sequence currently in the reservoir and we use long sequences to define $\varphi(k)$ for all other elements discovered in $\mathcal{T}$.       
\end{proof}

\section{Proof of Theorem~\ref{theo:predecessor-trees-with-ordering}}\label{appendix:RPO-trees}

\theoRPOtrees*

\begin{proof}
Firstly, we give a detailed proof for the case of punctual robustness.

We fix an infinite computable r.p.o.~tree $\mathcal{T}$. We choose a \emph{primitive recursive} approximation $(\mathcal{T}_s)_{s\in\mathbb{N}}$ of the tree $\mathcal{T}$ satisfying the following properties:
    \begin{itemize}
        \item every $\mathcal{T}_s$ is a finite tree and $\mathcal{T} = \bigcup_{s\in\mathbb{N}} \mathcal{T}_s$;

        \item $r\in \mathcal{T}_s \subseteq \mathcal{T}_{s+1}$ and $\mathrm{card}(\mathcal{T}_{s+1} \setminus \mathcal{T}_s) \leq 1$ for all $s\in\mathbb{N}$.
    \end{itemize}
    Intuitively speaking, $(\mathcal{T}_s)_{s\in\mathbb{N}}$ is a punctually non-decreasing sequence of finite trees such that for each $s$, either $\mathcal{T}_{s+1}$ is equal to $\mathcal{T}_s$, or $\mathcal{T}_{s+1}$ is obtained by appending only one new node to~$\mathcal{T}_s$.

    We want to construct a punctual tree $\mathcal{R} \cong \mathcal{T}$. If $\mathcal{T}$ has an infinite depth, then 
the existence of such a tree $\mathcal{R}$ follows from Theorem~\ref{ptime}.

Otherwise, the depth of the tree $\mathcal{T}$ is finite. Since $\mathcal{T}$ itself is infinite, there exists a node $a\in \mathcal{T}$ which has infinitely many children. Here our construction is split into two cases.

\proofsubparagraph{Case A.} Suppose that there exists a node $a\in\mathcal{T}$ with the following property: 
\begin{description}
	\item[($\ddagger$)] $a$ has infinitely many children $x$ such that $x$ also has its own child. 
\end{description}
In particular, this implies that $a$ has infinitely many grandchildren which are pairwise incomparable with respect to the ordering~$<_{\mathcal{T}}$.
Without loss of generality, we assume that $a \in \mathcal{T}_0$.

The construction proceeds in stages. At a stage $s$, we build a finite structure $\mathcal{R}_s$. We also construct a partial isomorphic embedding $\psi_s :\,\subseteq \mathcal{R}_s \to \mathcal{T}_s$. In the limit, the map $\psi = \bigcup_{s\in\mathbb{N}} \psi_s$ will be a computable isomorphism from the tree $\mathcal{R} = \bigcup_{s\in\mathbb{N}} \mathcal{R}_s$ onto $\mathcal{T}$.

The intuition behind the construction is as follows. In order to ensure the punctuality of the constructed structure $\mathcal{R}$, we `quickly' produce some (intended) grandchildren $d_0,d_1,d_2,\ldots$ of the node $\psi^{-1}(a)$ which are pairwise $<_{\mathcal{R}}$-incomparable. Inside $\mathcal{R}$, we \emph{do not connect} a node $d_i$ to (the component of) the node $\psi^{-1}(a)$ until we see an appropriate isomorphic image $z = \psi(d_i)$ in $\mathcal{T}_s$. (In particular, at some stages $s$ the structure $\mathcal{R}_s$ will be a finite forest that is not a tree.) Only after we find such an image $z$, we add a fresh node $e_i$ to $\mathcal{R}$ and declare $P(d_i, e_i) \, \& \, P(e_i, \psi^{-1}(a))$ inside $\mathcal{R}$ (i.e., we make $d_i$ a `real' grandchild of $\psi^{-1}(a)$).

Our construction of $\mathcal{R}$ proceeds as follows. At stage $0$ we put $\mathcal{R}_0=\mathcal{T}_0$ and $\psi_0 = \mathrm{id}_{\mathcal{T}_0}$.

In addition, at the end of each stage $s$, we choose the least natural number $x$ which currently does not belong to $\mathcal{R}_s$. We define $d_s = x$ and add $d_s$ to $\mathcal{R}_s$. For each $y\in \mathcal{R}_s\setminus \{ d_s\}$, we declare that $y$ is $<_{\mathcal{R}}$-incomparable with $d_s$ and that $\neg P(y,d_s) \, \& \, \neg P(d_s,y)$.

At a stage $s+1$, we find the least $i\leq s$ such that $\psi_s(d_i)$ is undefined. 
Suppose that the tree $\mathcal{T}_{s+1}$ contains nodes $y,z\not\in \mathrm{range}(\psi_s)$ such that $\mathcal{T} \models P(z,y)\& P(y,a)$ (i.e., $y$ and $z$ are `unaccounted' child and grandchild of $a$, respectively). Then we proceed as follows:
\begin{itemize}
    \item Choose the least natural number $e_i\not\in \mathcal{R}_s$ and add $e_i$ to $\mathcal{R}$. Declare that $d_i$ is a child of $e_i$ and $e_i$ is a child of $\psi^{-1}(a)$.
    \item Put $\psi_{s+1}(d_i) = z$ and $\psi_{s+1}(e_i) = y$.
    \item If needed, extend the current $\mathcal{R}_{s+1}$ to `mimic' the finite tree $\mathcal{T}_{s+1}$.
    More formally, we add (the least unused) numbers to $\mathcal{R}_{s+1}$ in such a way that the current map $\psi_{s+1}$ could be extended to an isomorphism $\xi$ between the trees $\mathcal{R}_{s+1} \setminus \{ d_k : k\geq i+1\}$ and $\mathcal{T}_{s+1}$. Put $\psi_{s+1} = \xi$. Notice that now we have $\mathcal{T}_{s+1} \subseteq \mathrm{range}(\psi_{s+1})$.
\end{itemize}
If $\mathcal{T}_{s+1}$ does not have such nodes $y,z$, then go to stage $s+2$.

This concludes the description of the construction. Observe that the construction of the structure $\mathcal{R}= \bigcup_{s\in\mathbb{N}} \mathcal{R}_s$ is punctual. Let $\mathcal{A}$ be the connected component of $\mathcal{R}$ which contains $\psi^{-1}(a)$. The properties of the construction immediately imply that every node from $\mathcal{R} \setminus \{ d_i : i\in\mathbb{N}\}$ belongs to $\mathcal{A}$. In addition, the map $\psi = \bigcup_{s\in\mathbb{N}} \psi_s$ is an isomorphic embedding from $\mathcal{A}$ to $\mathcal{T}$.

Since the node $a$ has Property~($\ddagger$), there exist infinitely many stages $s+1$ such that the tree $\mathcal{T}_{s+1}$ contains the needed nodes $y$ and $z$. Therefore, we conclude that every element $d_i$ belongs to $\mathcal{A}$, and hence, we have $\mathcal{A} = \mathcal{R}$. In addition, $\mathcal{T} =\bigcup_{s\in\mathbb{N}} \mathcal{T}_s \subseteq \mathrm{range}(\psi)$. We deduce that $\psi$ is a computable isomorphism from $\mathcal{R}$ onto $\mathcal{T}$. Therefore, $\mathcal{R}$ is a punctual copy of the tree $\mathcal{T}$.

\proofsubparagraph{Case B.} Otherwise, we can choose a node $a\in \mathcal{T}$ such that $a$ has infinitely many children and only finitely many children of $a$ have their own children. We fix the list of all children of $a$ which have their own children: $u_0 <_{\mathcal{T}} u_1 <_{\mathcal{T}} \dots <_{\mathcal{T}} u_k$. Again, we assume that $a,u_0,u_1,\ldots,u_k\in\mathcal{T}_0$.

Without loss of generality, we may assume that the interval $(u_0; u_1)_{\mathcal{T}} = \{ x : u_0 <_{\mathcal{T}} x <_{\mathcal{T}} u_1\}$ is infinite. Notice that every element $x\in (u_0; u_1)_{\mathcal{T}}$ does not have children. By Theorem~1.5 of~\cite{kalimullin_algebraic_2017}, we can choose a punctual isomorphic copy $\mathcal{L} = (\mathbb{N}, <_{\mathcal{L}})$ of the linear ordering $( (u_0; u_1)_{\mathcal{T}}, <_{\mathcal{T}})$. 

The intuition behind the construction is as follows: in order to ensure the punctuality of the constructed structure $\mathcal{R}\cong \mathcal{T}$, we `quickly' produce a copy of the interval $(u_0;u_1)_{\mathcal{T}}$. This procedure works by promptly introducing fresh elements $d_0,d_1,d_2,\ldots$ which `copy' the punctual linear ordering $\mathcal{L}$.

Our construction proceeds in stages. At a stage $s$, we build a finite tree $\mathcal{R}_s$ and an isomorphism $\theta_s$ from $\mathcal{T}_s \setminus (u_0; u_1)_{\mathcal{T}}$ onto $\mathcal{R}_s\setminus (\theta(u_0); \theta(u_1))_{\mathcal{R}}$.

At stage $0$ we put $\mathcal{R}_0 = \mathcal{T}_0$ and $\theta_0 = \mathrm{id}\upharpoonright (\mathcal{T}_0\setminus (u_0; u_1)_{\mathcal{T}})$. 
At the end of each stage $s$, we choose the least natural number $x$ which currently does not belong to $\mathcal{R}_s$. We put $d_s = x$ and add $d_s$ into $\mathcal{R}_s$. We declare that:
\begin{itemize}
    \item $d_s$ is a child of $\theta(a)$,

    \item $d_s$ does not have any children in $\mathcal{R}$,

    \item $\theta(u_0) <_{\mathcal{R}} d_s <_{\mathcal{R}} \theta(u_1)$,

    \item $d_i <_{\mathcal{R}} d_j$ if and only if $i <_{\mathcal{L}} j$, for all $i,j\in\mathbb{N}$.
\end{itemize}
Recall that $\mathrm{card}(\mathcal{T}_{s+1} \setminus \mathcal{T}_s) \leq 1$.

Consider a stage $s+1$. Suppose that there exists a (unique) element $v \in \mathcal{T}_{s+1} \setminus \mathcal{T}_s$ additionally satisfying $v\not \in (u_0;u_1)_{\mathcal{T}}$. Then we choose the least unused number $y$ and add this $y$ to $\mathcal{R}_{s+1}$ in such a way that the map $\theta_{s+1} = \theta_s \cup \{ (v,y)\}$ becomes an isomorphism from $\mathcal{T}_{s+1} \setminus (u_0;u_1)_{\mathcal{T}}$ onto $\mathcal{R}_{s+1} \setminus (\theta(u_0);\theta(u_1))_{\mathcal{R}}$. If there is no such $v$, then proceed to stage $s+2$.

This concludes the description of the construction. Observe that the construction of $\mathcal{R} = \bigcup_{s\in\mathbb{N}} \mathcal{R}_s$ is punctual. In addition, the properties of the construction immediately imply that the map $\theta = \bigcup_{s\in\mathbb{N}} \theta_s$ is a computable isomorphism from the tree $\mathcal{T} \setminus (u_0;u_1)_{\mathcal{T}}$ onto $\mathcal{R}\setminus (\theta(u_0);\theta(u_1))_{\mathcal{R}}$.

Since the ordering $(\{ d_i : i\in\mathbb{N}\}, <_{\mathcal{R}})$ is isomorphic to $\mathcal{L} \cong ((u_0;u_1)_{\mathcal{T}}, <_{\mathcal{T}})$, we deduce that the structure $\mathcal{R}$ is a punctual copy of the tree $\mathcal{T}$.

    Now we show how the proofs of Case~A and Case~B could be adapted to provide P-TIME robustness. 
    Here we describe only the key modifications of the constructions for Cases~A and~B. 
    
    We need to build a P-TIME structure $\mathcal{R}$ such that $\mathcal{R}\cong \mathcal{T}$ and $\mathcal{R}$ has domain $\mathbb{B} = \{0,1\}^{<\omega}$. 
    As in the P-TIME construction of Theorem~\ref{ptime}, at each stage $s$ we perform a fixed number $l$ of steps of the algorithm which calculates the computable tree $\mathcal{T}$.

    (A)\ For $s\in\mathbb{N}$, the intended grandchild $d_s$ should be chosen as the $\sqsubset$-least string $\alpha$ which currently does not belong to $\mathcal{R}_s$. The element $d_s$ is then declared a `short' string, and short strings are essentially used to construct the `reservoir set' (in terminology of the construction of Theorem~\ref{ptime}). All the other nodes which are added to $\mathcal{R}_{s+1}$ (i.e., the node $e_i$ and nodes which help to mimic $\mathcal{T}_{s+1}$) must be chosen as long enough strings.

    (B)\ We note that Grigorieff~\cite{grigorieff_every_1990} proved that every infinite computable linear ordering has an isomorphic P-TIME copy with domain $\mathbb{B}$ (see also Theorem~4.8 in~\cite{Cenzer-Remmel-Survey-PTIME}). Hence, we can choose a P-TIME linear ordering $\mathcal{L} = (\mathbb{B}, <_{\mathcal{L}})$ which is isomorphic to the structure $((u_0;u_1)_{\mathcal{T}}, <_{\mathcal{T}})$. Observe that now the set $\mathbb{B}$ has two P-TIME orderings: $<_{\mathcal{L}}$ and $\sqsubset$.
    
    At a stage $s$, the node $d_s$ must be chosen as a short string (i.e., the $\sqsubset$-least string $\alpha$ which currently does not belong to $\mathcal{R}_s$). This $d_s$ will serve as the `copy' of the string $\beta^s$ which is the $s$-th least string \emph{with respect to $\sqsubset$} from $\mathrm{dom}(\mathcal{L}) = \mathbb{B}$. In order to copy the relation $<_{\mathcal{L}}$, we put $d_i <_{\mathcal{R}} d_j$ if and only if $\beta^i <_{\mathcal{L}} \beta^j$. The nodes $d_s$, $s\in\mathbb{N}$, are used to construct the reservoir set. All the other nodes from $\mathcal{R}$ (i.e., the nodes $y$ from the construction which are added to some $\mathcal{R}_{s+1}$) are chosen as long strings.

    In this case we are going to introduce a certain modification to the proof of the lemma that relations on this tree are P-TIME. We observe that the ordering on the interval $\mathcal{I}=(\theta(u_0);\theta(u_1))_{\mathcal{R}}$ (consisting of short strings) has some polynomial bound $V(n)$ and all the relations  on the part of the tree $\mathcal{R}$ outside of that interval (consisting of long strings) have some polynomial bound $W(n)$. The only case when a short and a long string are in a relation is when a short string from $\mathcal{I}$ is a child of a long string $\theta(a)$. All such cases are verifiable in time bound $V(n)$. 
    
    To decide if $a$ and $b$ are in some relation, with $lh(a) \leq lh(b)=n$ we perform the construction until $s_0$ -- the later of steps $V(n)$ and $W(n)$. The elements are in the relation only if this is established at the latest at step $s_0$. The proof that this can be done in polynomial time is very similar to the case of trees of unbounded depth.

    The described modifications could be formalized to provide a P-TIME structure $\mathcal{R}\cong \mathcal{T}$ with $\mathrm{dom}(\mathcal{R}) = \mathbb{B}$. 
\end{proof}
    

\end{document}